\newtheorem{theorem}{Theorem}
\newtheorem{lemma}[theorem]{Lemma}
\newtheorem{corollary}[theorem]{Corollary}
\newtheorem{conjecture}[theorem]{Conjecture}
\newtheorem*{rep@theorem}{\rep@title}
\newcommand{\newreptheorem}[2]{%
\newenvironment{rep#1}[1]{%
 \def\rep@title{#2 \ref{##1}}%
 \begin{rep@theorem}}%
 {\end{rep@theorem}}}
\newtheoremstyle{problem-def}%
{}
{}
{}
{}
{\sc}
{.}
{.5em}
{\thmnote{#3}}
\theoremstyle{problem-def}
\newtheorem*{problem-def}{Problem}
\theoremstyle{plain}
\def\df#1{{\em #1\/}}
\def\C{\mathcal{C}}
\def\D{\mathcal{D}}
\def\G{\mathcal{G}}
\def\H{\mathcal{H}}
\def\L{\mathcal{L}}
\def\M{\mathcal{M}}
\def\P{\mathcal{P}}
\def\Q{\mathcal{Q}}
\def\T{\mathcal{T}}
\def\NN{\mathbb{N}}
\def\RR{\mathbb{R}}
\renewcommand{\SS}{\mathbb{S}}
\newcounter{cases}
\newcounter{subcases}[cases]
\newenvironment{casesblock}{%
\setcounter{cases}{0}\setcounter{subcases}{0}\par}{}
\long\def\case#1{\stepcounter{cases}\medskip\noindent{\bf Case \arabic{cases}: }#1. \par}
\def\ifempty#1#2#3{%
\def\ifemptytest{#1}%
\def\ifemptyempty{}%
\ifx\ifemptyempty\ifemptytest #2\else #3\fi%
}
\def\iso{\cong}
\renewcommand{\ss}{\subseteq}
\def\sm{\setminus}
\def\into{\to}
\newcommand{\name}[1]{{\sc #1}}
\def\Forb{\text{Forb}}
\def\g{g}
\def\gp{g^+}
\def\galt{g_a}
\def\gap{g_a^+}
\def\ep{\epsilon^+}
\def\e{\epsilon}
\def\hat#1{\widehat{#1}}
\def\dc#1#2{\Delta_{#1}(#2)}
\def\Gcxy{\G^\circ_{xy}}
\def\Cc{\C^\circ}
\def\h{h}
\begin{document}
\title{Obstructions of Connectivity 2 for Embedding Graphs into the Torus}
\author{%
  Bojan Mohar
  \thanks{Supported in part by an NSERC Discovery Grant (Canada),
    by the Canada Research Chair program, and by the
    Research Grant P1--0297 of ARRS (Slovenia).}~\thanks{On leave from:
    IMFM \& FMF, Department of Mathematics, University of Ljubljana, Ljubljana,
    Slovenia.}
  \qquad
  Petr \v{S}koda\\\\
  \normalsize
  Department of Mathematics, \\
  \normalsize
  Simon Fraser University,\\
  \normalsize
  8888 University Drive,\\ 
  \normalsize
  Burnaby, BC, Canada.}
\date{}
\maketitle

\begin{abstract}
The complete set of minimal obstructions for embedding graphs into the torus is still not determined.
In this paper, we present all obstructions for the torus of connectivity 2. Furthermore, we
describe the building blocks of obstructions of connectivity 2 for any orientable surface.
\end{abstract}

\section{Introduction}

\def\ss{\subseteq}
\def\L{\mathcal{L}}
\def\SS{\mathbb{S}}

The problem which graphs can be embedded in a given surface is a fundamental question
in topological graph theory. Robertson and Seymour~\cite{robertson-1990} proved that for each surface $\SS$
the class of graphs that embeds into $\SS$ can be characterized by a finite list $\Forb(\SS)$ of minimal forbidden minors (or \df{obstructions}).
For the 2-sphere $\SS_0$, $\Forb(\SS_0)$ consists of the \df{Kuratowski graphs}, $K_5$ and $K_{3,3}$. The list of obstructions $\Forb(\NN_1)$
for the projective plane $\NN_1$ already contains 35 graphs and $\NN_1$ is the only other surface for which the complete list of forbidden minors is known.
The number of obstructions for both orientable and non-orientable surfaces seems to grow fast with the genus
and that can be one of the reasons why even  for the torus $\SS_1$ the complete list of obstructions is still not known, although thousands of obstructions
were generated by the computer (see~\cite{gagarin-2009}).

In this paper, we study the obstructions for orientable surfaces of low connectivity.
It is easy to show that obstructions that are not 2-connected can be obtained as disjoint unions and 1-sums of obstructions for surfaces of smaller genus (see~\cite{battle-1962}).
Stahl~\cite{stahl-1977} and Decker et al.~\cite{decker-1981} showed that genus of 2-sums differs by at most 1 from the sum of genera of its parts.
Decker et al.~\cite{decker-1985} provided a simple formula for the genus of a 2-sum that will be used in this paper.
We shall prove that obstructions for an orientable surface of connectivity 2 can be obtained as a 2-sum of building blocks that fall (roughly) into two families of graphs.
One family consists of obstructions for embeddings into surfaces of smaller genus.
The graphs in the second family are critical with respect to the graph parameter $\galt$ defined in Sect.~\ref{sc-terminals}.
We use this characterization in Sect.~\ref{sc-torus} to construct all obstructions for the torus of connectivity 2.

\section{Notation}

Let $G$ be a connected multigraph. An (orientable) \df{embedding} $\Pi$ of $G$ is a mapping that assigns to each vertex $v \in V(G)$ 
a cyclic permutation $\pi_v$ of the edges incident with $v$, called the \df{local rotation} around $v$. 
If $\Pi$ is an embedding of $G$, then we also say that $G$ is \df{$\Pi$-embedded}.
Given a $\Pi$-embedded graph $G$, a \df{$\Pi$-face} (or \df{$\Pi$-facial walk}) is a cyclic sequence $(v_1, e_1, \ldots, v_k, e_k)$ such that $e_i = v_iv_{i+1}$,
$e_i = \pi_{v_i}(e_{i-1})$ for each $i = 1,\ldots, k$ (where $v_{k+1} = v_1$ and $e_0 = e_k$), and all pairs $(v_i, e_i)$ are distinct.
The linear subsequence $e_{i-1}, v_i, e_i$ in a $\Pi$-face is called a \df{$\Pi$-angle} at $v_i$.
Note that edges of $\Pi$-angles are formed precisely by the pairs of edges that are consecutive in the local rotation around a vertex.

Each edge $e$ of a $\Pi$-embedded graph appears twice in the $\Pi$-faces. If there exists a single $\Pi$-face where $e$ appears twice,
we say that $e$ is \df{singular}. Otherwise, $e$ is \df{non-singular}.

The \df{genus} of an orientable embedding $\Pi$ of a graph $G$ is given by the Euler formula,
\begin{equation}
  \label{eq-euler}
  g(\Pi) = \frac{1}{2}(2 - n + m - f),
\end{equation}
where $n$ is the number of vertices, $m$ the number of edges and $f$ the number of $\Pi$-faces of $G$.
The \df{genus} $g(G)$ of a connected multigraph $G$ is the minimum genus of an orientable embedding of $G$.

In this paper, we will deal mainly with the class $\G$ of simple graphs. 
Let $G \in \G$ be a simple graph and $e$ an edge of $G$. 
Then $G-e$ denotes the graph obtained from $G$ by \df{deleting} $e$
and $G/e$ denotes the graph\footnote{When contracting an edge, one may obtain multiple edges. 
We shall replace any multiple edges by single edges as such a simplification has no effect on the genus.} 
obtained from $G$ by \df{contracting} $e$. It is convenient for us to formalize these graph operations.
The set $\M(G) = E(G) \times \{-,/\}$ is the \df{set of minor-operations} available for $G$.
An element $\mu \in \M(G)$ is called a \df{minor-operation} and $\mu G $ denotes the graph obtained
from $G$ by applying $\mu$. For example, if $\mu = (e, -)$ then $\mu G  = G - e$.
A graph $H$ is a \df{minor} of $G$ if $H$ can be obtained from a subgraph of $G$ by contracting some edges.
If $G$ is connected, then $H$ can be obtained from $G$ by a sequence of minor-operations.

Let $H$ be a subgraph of $G$. We say that $H$ is \df{minor-tight} (for the genus parameter $g$) if 
$g(\mu G) < g(G)$ for every minor-operation $\mu \in \M(H)$. The following observation
asserts that being an obstruction for  a surface is equivalent to having all subgraphs minor-tight.

\begin{lemma}
\label{lm-minor-tight}
Let $H_1, \ldots, H_s$ be subgraphs of a graph $G$ with $g(G) = k+1$.
If $E(H_1) \cup \cdots \cup E(H_s) = E(G)$, then
$G$ is an obstruction for\/ $\SS_k$ if and only if $H_1, \ldots, H_s$ are minor-tight.
\end{lemma}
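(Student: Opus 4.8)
The plan is to turn the statement into a tautology by unwinding the two definitions involved, the only non-formal input being the minor-monotonicity of the orientable genus. Recall that $G$ being an \emph{obstruction} for $\SS_k$ means $g(G) > k$ while $g(H) \le k$ for every proper minor $H$ of $G$; the first condition is handed to us by the hypothesis $g(G) = k+1$. So I would first argue that, for connected $G$, minimality against \emph{all} proper minors is the same as minimality against the $2|E(G)|$ one-step minors $\mu G$, $\mu \in \M(G)$. Indeed, since the genus does not increase under deleting or contracting an edge (nor under simplifying the parallel edges that a contraction may create, as noted in the paper), and since — by the remark quoted in the excerpt — every proper minor $H$ of the connected graph $G$ is a minor of $\mu G$ for a suitable $\mu \in \M(G)$, we get $g(H) \le g(\mu G)$ for that $\mu$; the one case needing a word is a vertex deletion $G-v$, which for a non-isolated $v$ is a minor of $G-e$ with $e$ incident to $v$. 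Hence ``$g(H)\le k$ for every proper minor $H$'' is equivalent to ``$g(\mu G) \le k$ for every $\mu \in \M(G)$'', and because $g(\mu G) \le g(G) = k+1$ always holds, this last condition is in turn equivalent to ``$g(\mu G) < g(G)$ for every $\mu \in \M(G)$.'' In other words, $G$ is an obstruction for $\SS_k$ if and only if $G$, regarded as a subgraph of itself, is minor-tight.

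The second and last step is purely set-theoretic. From $\M(H_i) = E(H_i)\times\{-,/\}$ and the hypothesis $E(H_1)\cup\cdots\cup E(H_s) = E(G)$ we get $\M(H_1)\cup\cdots\cup\M(H_s) = E(G)\times\{-,/\} = \M(G)$. Therefore the condition ``$g(\mu G) < g(G)$ for every $\mu \in \M(G)$'' is literally the same as ``for every $i$ and every $\mu \in \M(H_i)$, $g(\mu G) < g(G)$'', i.e.\ ``$H_1,\dots,H_s$ are all minor-tight.'' Combining this with the equivalence from the first step proves both implications of the lemma simultaneously.

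I do not expect a genuine obstacle here; the content is concentrated in the first reduction, whose two delicate points are (a) invoking minor-monotonicity of the orientable genus, which is standard (an optimal rotation system of $G$ induces a rotation system of $G-e$ or of $G/e$ of no larger genus, and deleting a parallel edge does not raise the genus), and (b) checking that vertex deletions are subsumed by edge deletions in the connected case. It is also worth emphasising where the hypothesis $g(G)=k+1$ enters: it is exactly what makes ``the genus strictly drops under $\mu$'' coincide with ``$\mu G$ embeds in $\SS_k$''; with a larger gap between $g(G)$ and $k$ the equivalence would fail.
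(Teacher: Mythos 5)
Your proof is correct, and it supplies exactly the routine unwinding of the definitions that the paper leaves implicit (the lemma is stated there as an observation with no written proof). The two small points you flag — reducing arbitrary proper minors to one-step minors via minor-monotonicity of genus, and absorbing vertex deletions into edge deletions for connected $G$ — are precisely the points that need checking, and you handle them correctly.
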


It is well-known that each closed orientable surface is homeomorphic, for some $k \ge 0$, to the surface $\SS_k$,
which is the surface obtained from the sphere by adding $k$ handles.
A graph with an embedding of genus $k$ can be viewed as embedded onto $\SS_k$ (see~\cite{mohar-book}).

A graph has \df{connectivity} $k$ when it is $k$-connected but not $(k+1)$-connected.
An edge whose deletion disconnects the graph is a \df{cut-edge}.
The structure of obstructions for orientable surfaces that have connectivity at most 1 is very simple.
They are disjoint unions and 1-sums of obstructions for surfaces of smaller genus. 
This can be easily seen as an application of the following theorem that states that the 
genus of graphs is additive with respect to their 2-connected components (or \df{blocks}).

\begin{theorem}[Battle et al.~\cite{battle-1962}]
\label{th-battle-ori}
The genus of a graph is the sum of the genera of its blocks.
\end{theorem}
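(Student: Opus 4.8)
The plan is to reduce the statement to a single amalgamation over one cut vertex, prove an easy upper bound by gluing embeddings together, and then establish a matching lower bound whose core is a combinatorial fact about facial walks at that cut vertex. Additivity over connected components is routine --- a rotation system for a graph is the same thing as a rotation system for each of its components, every facial walk stays inside one component, and \eqref{eq-euler} adds over components --- so I may assume $G$ is connected and induct on the number of blocks. If $G$ has only one block there is nothing to prove. Otherwise $G$ has a block $B$ containing exactly one cut vertex $v$ of $G$ (a leaf of the block--cut tree). Put $G_2 := B$ and $G_1 := \bigcup\{B' : B' \text{ is a block of } G,\ B' \neq B\}$; then $G_1$ and $G_2$ are connected, $G = G_1 \cup G_2$, $G_1 \cap G_2 = \{v\}$, and $G_1$ has one block fewer than $G$. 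By the induction hypothesis it suffices to prove $g(G) = g(G_1) + g(G_2)$.

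For the upper bound $g(G) \le g(G_1) + g(G_2)$, take minimum-genus embeddings $\Pi_1$ of $G_1$ and $\Pi_2$ of $G_2$. Each $\Pi_i$ has a face with an angle at $v$; splice the two local rotations at $v$ together at one such angle of $\Pi_1$ and one of $\Pi_2$, leaving every other local rotation unchanged. The result is an embedding $\Pi$ of $G$ in which the chosen face of $\Pi_1$ is fused with the chosen face of $\Pi_2$ and no other face is affected, so $f(\Pi) = f(\Pi_1) + f(\Pi_2) - 1$; since also $|V(G)| = |V(G_1)| + |V(G_2)| - 1$ and $|E(G)| = |E(G_1)| + |E(G_2)|$, formula \eqref{eq-euler} gives $g(\Pi) = g(\Pi_1) + g(\Pi_2)$.

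For the lower bound $g(G) \ge g(G_1) + g(G_2)$, fix a minimum-genus embedding $\Pi$ of $G$ and let $\Pi_i$ be its restriction to $G_i$: for $i=1$, delete from every local rotation the edges of $G_2$ and forget the vertices of $V(G_2) \setminus \{v\}$, and symmetrically for $i=2$. Since $g(\Pi_i) \ge g(G_i)$, it is enough to prove $g(\Pi) \ge g(\Pi_1) + g(\Pi_2)$, which after substituting the same vertex- and edge-counts into \eqref{eq-euler} is exactly
\begin{equation}
  \label{eq-faces}
  f(\Pi) \le f(\Pi_1) + f(\Pi_2) - 1.
\end{equation}
The mechanism behind \eqref{eq-faces} is that, $v$ being a cut vertex, $G - v$ separates $V(G_1) \setminus \{v\}$ from $V(G_2) \setminus \{v\}$, so along any $\Pi$-facial walk an edge of $G_1$ can be immediately followed by an edge of $G_2$ (or conversely) only at a $\Pi$-angle of $v$ --- call such angles \emph{transition angles}. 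Cutting every $\Pi$-facial walk at its transition angles breaks the collection of facial walks into closed walks and $v$--$v$ walks, each lying entirely in $G_1$ or entirely in $G_2$; the $G_1$-pieces among these are precisely what gets reassembled, through the angles at $v$ created by the deletion of $G_2$, into the facial walks of $\Pi_1$, and likewise on the $G_2$ side. So \eqref{eq-faces} asserts that the number of $\Pi$-facial walks meeting both sides exceeds by at most one the total number of facial walks newly produced on the two sides of the deletion.

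I expect \eqref{eq-faces} to be the only genuine difficulty. The trap is that if the local rotation at $v$ consists of $t$ alternating arcs of $G_1$- and $G_2$-edges, the naive arc-by-arc bookkeeping only gives $f(\Pi) \le f(\Pi_1) + f(\Pi_2) + (t-1)$, which is worthless for $t \ge 2$; it discards the fact that the two reassembly patterns and the interleaving at $v$ all come from one consistent orientable embedding. The way I would get around this is to \emph{normalise} first: show that $\Pi$ can be altered without increasing its genus until, near $v$, it has exactly the shape produced by the amalgamation construction of the upper bound --- the $G_1$-edges at $v$ occupy one arc of the local rotation, the $G_2$-edges the complementary arc, and the two transition angles lie on a common face. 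For such a normalised $\Pi$, deleting the edges of $G_2$ merges precisely that one face with the remaining structure and splits off $G_1$ cleanly, so \eqref{eq-faces} holds with equality, and the single unit of slack is exactly the one face of $\Pi$ crossing $v$ that becomes one face on each side. Proving that this untangling can be carried out without raising the genus --- that a wasteful interleaving at a cut vertex can always be undone --- is the technical heart of the theorem; everything else is Euler-formula bookkeeping together with the block--cut structure.
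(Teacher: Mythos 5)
The paper does not prove Theorem~\ref{th-battle-ori}; it cites it directly from Battle, Harary, Kodama, and Youngs~\cite{battle-1962}, so there is no in-paper proof to compare against. Judging your proposal on its own merits, the reduction to a single leaf block, the upper-bound splice (which is indeed the 1-sum analogue of the paper's Case~1 construction for 2-sums), and the translation of the lower bound into the face inequality $f(\Pi) \le f(\Pi_1) + f(\Pi_2) - 1$ are all sound. But you then identify the real content of that inequality --- the combinatorial fact that a minimum-genus embedding cannot profit from interleaving the $G_1$- and $G_2$-edges at the cut vertex --- and, rather than proving it, you state that proving it ``is the technical heart of the theorem.'' That is a genuine gap, and it is precisely the gap the theorem exists to fill.

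Two concrete issues with leaving it there. First, the proposed normalisation lemma (``any embedding can be untangled at $v$ without raising the genus'') is \emph{equivalent} to the lower bound you are trying to establish, so asserting it without proof is close to circular: once you know $g(G) = g(G_1) + g(G_2)$, the upper-bound construction already supplies an untangled minimum-genus embedding, but you cannot invoke that before proving the theorem. Second, if you do want to pursue normalisation, the argument is not a local cosmetic move: rearranging the arcs at $v$ to reduce the interleaving number $t$ changes the face structure globally, and one has to track how the permutations that govern re-entry to $v$ on each side recombine. A self-contained proof would either (a) prove the untangling step by an explicit surgery argument with a careful Euler-count showing the genus does not go up, or (b) bypass normalisation entirely and prove the face inequality directly by a counting argument on the transition angles at $v$ (in effect a cycle-count inequality for the permutations induced on the $G_1$-arcs and $G_2$-arcs by the facial walks). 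Either way, that missing step must be supplied before this constitutes a proof.
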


\section{Graphs with terminals}
\label{sc-terminals}

In this paper, we study obstructions for embedding graphs into orientable surfaces 
that have connectivity 2. 
Given graphs $G_1$ and $G_2$ such that $V(G_1) \cap V(G_2) = \{x, y\}$ and $xy \not\in E(G_1), E(G_2)$,
we say that each of the graphs $G = (V(G_1) \cup V(G_2), E(G_1) \cup E(G_2))$ and $G + xy$ is an \df{$xy$-sum} of $G_1$ and $G_2$.
We shall always specify if the $xy$-sum contains the edge $xy$ or not. The graphs $G_1$ and $G_2$ are the \df{parts} of the $xy$-sum.
If $x$ and $y$ are not important, we sometimes refer to $G$ and $G+xy$ as \df{2-sums}.
\begin{figure}
  \centering
  \includegraphics{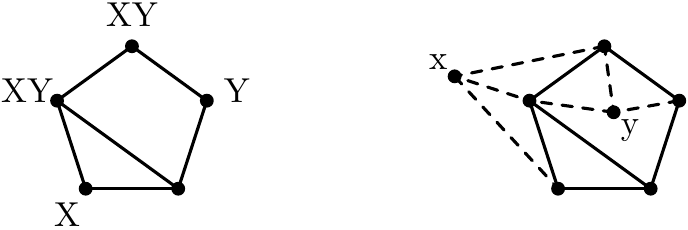}
  \caption{An example of an XY-labelled graph and its corresponding graph in $\Gcxy$.}
  \label{fg-xy-labelled}
\end{figure}

We wish to study the parts of a 2-sum separately and, in order to do so, we mark the vertices
of the separation as \df{terminals}. This prompts us to study the class of graphs $\G_{xy}$ with two terminals, $x$ and $y$.
The letters $x$ and $y$ will be consistently used for the two distinguished terminals.
Most notions that are used for graphs can be used in the same way for graphs with terminals.
Some notions differ though and, to distinguish between graphs with and without terminals, let $\hat{G}$ be the underlying graph of $G$ without terminals (for $G \in \G_{xy}$).
Two graphs, $G_1$ and $G_2$, in $\G_{xy}$ are \df{isomorphic} if there is an isomorphism of the graphs $\hat{G_1}$ and $\hat{G_2}$
that maps terminals of $G_1$ onto terminals of $G_2$ (and non-terminals onto non-terminals) possibly exchanging $x$ and $y$.
We define minor-operations on graphs in $\G_{xy}$ in the way that $\G_{xy}$ is a minor-closed family.
When performing edge contractions on $G \in \G_{xy}$, we do not allow contraction of the edge $xy$ (if $xy \in E(G)$)
and when contracting an edge incident with a terminal, the resulting vertex becomes a terminal. 
We use $\M(G)$ to denote the set of available minor-operations for $G$. 
Since $(xy, /) \not\in \M(G)$ for $G \in \G_{xy}$, 
we shall use $G /xy$ to denote the underlying simple graph in $\G$ obtained from $G$
by identification of $x$ and $y$; for this operation, we do not require the edge $xy$ to be present in~$G$.

For convenience, we use $\Gcxy$ for the subclass of $\G_{xy}$ of graphs without the edge $xy$.
We shall sometimes depict the graphs in $\Gcxy$ as XY-labelled graphs.
Given a graph $G \in \Gcxy$, let $H$ be the graph $G - x - y$ where a vertex of $H$ is labelled X if it is adjacent to $x$ in $G$
and it is labelled Y if it is adjacent to $y$ in $G$ (see Fig.~\ref{fg-xy-labelled}).
We say that $H$ is the \df{XY-labelled} graph \df{corresponding} to~$G$.

A \df{graph parameter} is a function $\G \into \RR$ that is constant on each isomorphism class of $\G$.
Similarly, we call a function $\G_{xy} \into \RR$ a \df{graph parameter} if it is constant on each isomorphism class of $\G_{xy}$.
A graph parameter $\P$ is \df{minor-monotone} if $\P(H) \le \P(G)$ for each graph $G \in \G_{xy}$ and each minor $H$ of $G$.
The graph genus is an example of a minor-monotone graph parameter.

Several other graph parameters will be used in this paper.
We use $G^+$ for the graph $G$ plus the edge $xy$ if it is not already present.
The genus of $G^+$ can be also viewed as a graph parameter $\gp$ defined as $\gp(G) = \g(G^+)$.
The graph parameter $\theta = \gp - g$ captures the difference between the genera of $G^+$ and $G$, that is $\theta(G) = \gp(G) - g(G)$.
Note that $\theta(G) \in \{0, 1\}$. 
\begin{figure}
  \centering
  \includegraphics{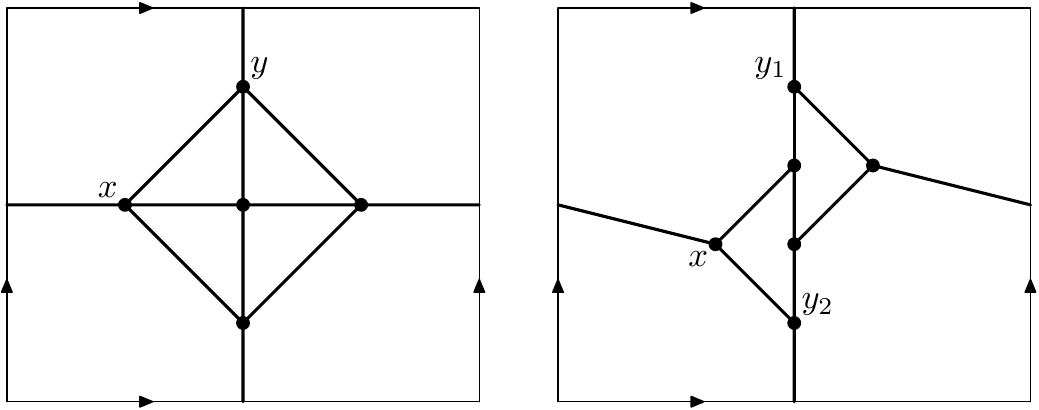}
  \caption{Kuratowski graphs and their two-vertex alternating embeddings in the torus.}
  \label{fg-kuratowski-alt}
\end{figure}

In order to compute the genus of an $xy$-sum of graphs, it is necessary to know whether $G$ has a minimum
genus embedding $\Pi$ with $x$ and $y$ appearing at least twice in an alternating order on a $\Pi$-face.
More precisely, we say that an embedding $\Pi$ is \df{$xy$-alternating} if there is a $\Pi$-face $W$ such that $(x,y,x,y)$ is a cyclic subsequence of $W$.
A graph $G \in \G_{xy}$ is \df{$xy$-alternating} if it has a minimum genus embedding that is $xy$-alternating. 
Fig.~\ref{fg-kuratowski-alt} shows two examples of $xy$-alternating embeddings in the torus.
We associate a graph parameter with this property. Let $\e(G) = 1$ if $G$ is $xy$-alternating and $\e(G) = 0$ otherwise.
We shall also use the graph parameter $\ep$ defined as $\ep(G) = \e(G^+)$.

In order to describe minimum genus embeddings of an $xy$-sum $G$ of graphs $G_1$ and $G_2$,
it is sufficient to consider two types of embeddings. To construct them, we take particular minimum genus embeddings $\Pi_1$ and $\Pi_2$ of $G_1$ and $G_2$ (respectively)
and combine them into an embedding $\Pi$ of $G$. 
For a non-terminal vertex $v$, let the local rotation around $v$ in $\Pi$ be the same as the local rotation around $v$ in $\Pi_i$ (if $v \in V(G_i)$ for $i \in \{1,2\}$).
Consider $\Pi_1$-faces $W_1$ and $W_2$ incident with $x$ and $y$, respectively,
and $\Pi_2$-faces $W_3$ and $W_4$ incident with $x$ and $y$, respectively.
Note that the faces $W_1$ and $W_2$ (and also $W_3$ and $W_4$) need not to be distinct.
We distinguish three cases.

\begin{casesblock}
  \case{$W_1, W_2, W_3, W_4$ are distinct faces}
  Write the face $W_1$ as $(x, e_1, U_1, e_2)$, $W_2$ as $(y, f_1, U_2, f_2)$, $W_3$ as $(x, e_3, U_3, e_4)$, and $W_4$ as $(y, f_3, U_4, f_4)$.
  Let $e_1, S_1, e_2$ be the linear sequence obtained from $\Pi_1(x)$ by cutting it at $e_1, e_2$.
  Similarly, let $e_3, S_2, e_4$ be the linear sequence obtained from $\Pi_2(x)$ by cutting it at an $e_3, e_4$.
  We let $\Pi(x)$ be the cyclic sequence $(e_1, S_1, e_2, e_3, S_2, e_4)$.
  Similarly, we define $\Pi(y)$ as the concatenation of the two linear sequences obtained from $\Pi_1(y)$ and $\Pi_2(y)$
  by cutting each of them at $f_1,f_2$ and $f_3, f_4$, respectively.
  Each $\Pi_1$-face and $\Pi_2$-face different from $W_1, W_2, W_3$, and $W_4$ is also a $\Pi$-face.
  The faces $W_1$ and $W_3$ combine into the $\Pi$-face $(x, e_1, U_1, e_2, x, e_3, U_3, e_4)$
  and the faces $W_2$ and $W_4$ combine into the $\Pi$-face $(y, f_1, U_2, f_2, y, f_3, U_4, f_4)$.
  Thus, the total number of faces decreased by two and~(\ref{eq-euler}) gives the following value of $g(\Pi)$:
  \begin{equation}
    g(\Pi) =  g(G_1) + g(G_2) + 1 =: \h_0(G).\label{eq-handle}
  \end{equation}

  \case{$W_1, W_2, W_3, W_4$ consist of three distinct faces}
  We may assume that $W_3 = W_4 = (x, e_3, U_3, f_4, y, f_3, U_4, e_4)$.
  The same construction as in the previous case (with $W_1$ and $W_2$ expressed as above) combines $W_1, W_2$, and $W_3$ into a single $\Pi$-face
  $(x, e_1, U_1, e_2, e_3, U_3, f_4, y, f_1, U_2, f_2, y, f_3, U_4, e_4, x)$.
  Again, the total number of faces decreases by two and the genus of $\Pi$ is given by~(\ref{eq-handle}).

  \case{$W_1 = W_2$ and $W_3 = W_4$}
  Observe that since $W_1 = W_2$, we have that $\theta(G_1) = 0$ and, similarly, we have $\theta(G_2) = 0$.
  Write $W_1 = W_2 = (x, e_1, U_1, f_2, y, f_1, U_2, e_2)$ and $W_3 = W_4 = (x, e_3, U_3, f_4, y, f_3, U_4, e_4)$.
  The above construction combines $W_1$ and $W_3$ into the $\Pi$-faces $(x, e_1, U_1, f_2, y, f_3, U_4, e_4)$ and $(y, f_1, U_2, e_2, x, e_3, U_3, f_4)$.
  Thus, the total number of faces did not change and~(\ref{eq-euler}) gives the following value of $g(\Pi)$.
  \begin{equation}
    g(\Pi) =  g(G_1) + g(G_2).\label{eq-same-face}
  \end{equation}
\end{casesblock}

\begin{figure}
  \centering
  \includegraphics{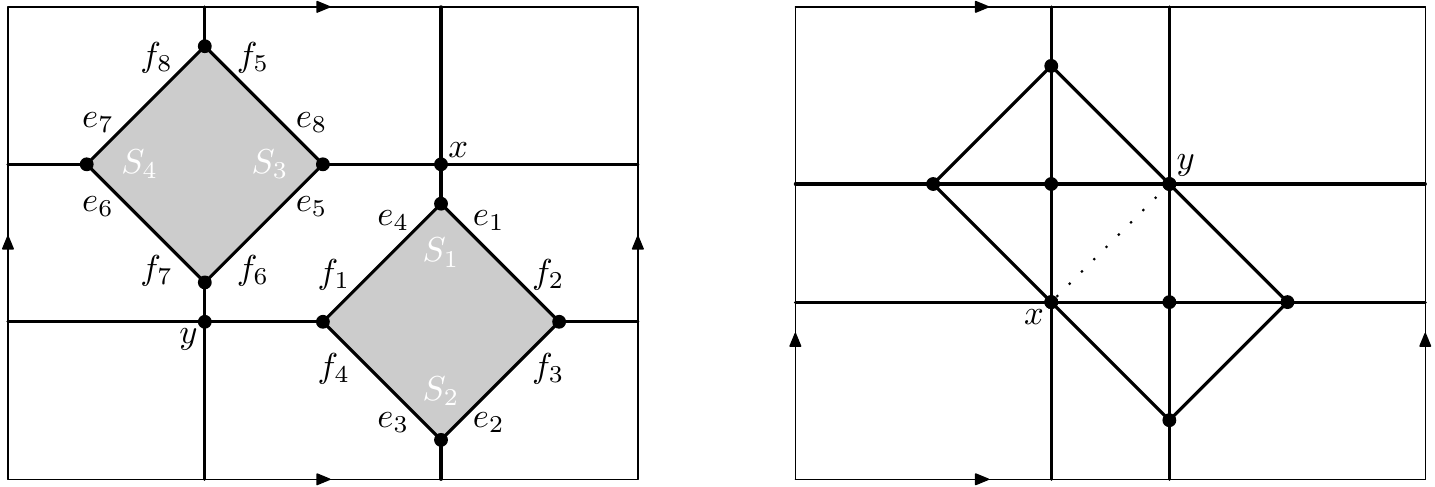}
  \caption{(a) An illustration of an embedding of an $xy$-sum of two $xy$-alternating graphs on the torus.
    For better clarity, the vertices $x$ and $y$ were split into 5 vertices each. Contract all the edges incident with $x$ and $y$ to get
    the $xy$-sum.
    (b) The 2-sum of two copies of $K_5$ embedded into the torus.}
  \label{fg-2-k5}
\end{figure}
Suppose that $\Pi_1$ and $\Pi_2$ are minimum genus embeddings of $G_1$ and $G_2$ (respectively) that are both $xy$-alternating.
Let $W_1$ and $W_2$ be the $xy$-alternating faces of $\Pi_1$ and $\Pi_2$, respectively, 
and write $W_1$ as $(x, e_1, U_1, f_2, y, f_1, U_2, e_4, x, e_3, U_3, f_4, y, f_3, U_4, e_2)$ and 
$W_2$ as $(x, e_5, U_5, f_6, y, f_5, U_6, e_8, x, e_7, U_7, f_8, y, f_7, U_8, e_6)$.
Again, the local rotation $\Pi(v)$ of a non-terminal vertex $v \in V(G_i)$ is set to $\Pi_i(v)$, $i = 1,2$.
To construct $\Pi(x)$, cut $\Pi_1(x)$ at $e_1, e_2$ and $e_3, e_4$ to obtain two linear sequences $e_1, S_1,e_4$ and $e_3,S_2,e_2$ 
and cut $\Pi_2(x)$ at $e_5,e_6$ and $e_7,e_8$ to obtain $e_5,S_3,e_8$ and $e_7,S_4,e_6$.
Let $\Pi(x)$ be the cyclic sequence $(e_1,S_1, e_4,e_5,S_3, e_8, e_3, S_2, e_2, e_7, S_4, e_6)$.
We construct $\Pi(y)$ similarly. Fig.~\ref{fg-2-k5} illustrates this process and gives an example of a 2-sum of two $K_5$'s.
The faces $W_1$ and $W_2$ are combined into $\Pi$-faces $(x, e_1, U_1, f_2, y, f_7, U_8, e_6)$, $(y, f_1, U_2, e_4, x, e_5, U_5, f_6)$,
$(x, e_3, U_3, f_4, y, f_5, U_6, e_8)$, and $(y, f_3, U_4, e_2, x, e_7, U_7, f_8)$.
As the total number of faces increased by two, (\ref{eq-euler}) gives the following value of $g(\Pi)$.
\begin{equation}
  g(\Pi) =  g(G_1) + g(G_2) - 1.\label{eq-alternating}
\end{equation}

Usually, there is a minimum genus embedding of $G$ constructed from the minimum genus embeddings of $G_1$ and $G_2$.
Suppose now that $\theta(G_1) = 1$, $\ep(G_1) = 1$, and $\e(G_2) = 1$. Since $\theta(G_1) = 1$, the only embedding described above
that we can construct from minimum genus embeddings of $G_1$ and $G_2$ has genus $g(G_1) + g(G_2) + 1$.
On the other hand, $\g(G_1^+) = g(G_1) + 1$ and both $G_1^+$ and $G_2$ are $xy$-alternating.
Thus we obtain an embedding of $G$ of genus $g(G_1^+) + g(G_2) - 1 = g(G_1) + g(G_2) < g(G_1)  + g(G_2) + 1$.
Hence it is necessary to consider also the embeddings of $G_1^+$ and $G_2^+$.
The minimum of the genera given by equations~(\ref{eq-same-face}) and~(\ref{eq-alternating}) can be combined into a single value, denoted $\h_1(G)$:
\begin{equation}
  \h_1(G) = \gp(G_1) + \gp(G_2) - \ep(G_1)\ep(G_2).\label{eq-face}
\end{equation}

Using the parameters defined above, we can write
\begin{equation*}
\h_1(G) = g(G_1) + g(G_2) + \theta(G_1) + \theta(G_2) - \ep(G_1)\ep(G_2).
\end{equation*}
The similarity of the above equation to~(\ref{eq-handle}) leads us to define the graph parameter $\eta(G_1, G_2) = \theta(G_1) + \theta(G_2) - \ep(G_1)\ep(G_2)$.
Note that $\eta(G_1, G_2) \in \{-1, 0, 1, 2\}$.
This gives another expression for $\h_1$:
\begin{equation}
\h_1(G) = g(G_1) + g(G_2) + \eta(G_1, G_2).\label{eq-eta}
\end{equation}

Decker et al.~\cite{decker-1985} proved the following formula for the genus of a 2-sum of graphs.

\begin{theorem}[Decker, Glover, and Huneke~\cite{decker-1985}]
  \label{th-decker-ori}
  Let $G$ be an $xy$-sum of connected graphs $G_1$ and $G_2$. 
  If $xy \not\in E(G)$, then $g(G)$ is the minimum of $\h_0(G)$ and $\h_1(G)$, else $g(G) = \h_1(G)$.
  Furthermore,
  \begin{enumerate}[label=\rm(\roman*)]
  \item
    $\ep(G) = 1$ if and only if $\ep(G_1) \not= \ep(G_2)$, and
  \item
    $\theta(G) = 1$ if and only if $xy \not\in E(G)$ and $\eta(G_1, G_2) = 2$.
  \end{enumerate}
\end{theorem}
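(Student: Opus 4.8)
The plan is to prove the genus formula first, splitting it into its two inequalities, and then to deduce~(i) and~(ii). The upper bounds require nothing beyond the constructions in Cases~1--3 above: combining arbitrary minimum genus embeddings $\Pi_1,\Pi_2$ of $G_1,G_2$ yields an embedding of $G_1\cup G_2$ of genus at most $g(G_1)+g(G_2)+1$, so $g(G)\le\h_0(G)$ when $xy\notin E(G)$; and running the Case~3 / $xy$-alternating construction on minimum genus embeddings of $G_1^+$ and $G_2^+$ yields an embedding of $G^+$, and hence (deleting $xy$ if necessary) of $G$, of genus at most $\gp(G_1)+\gp(G_2)-\ep(G_1)\ep(G_2)=\h_1(G)$. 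Thus $g(G)\le\h_1(G)$ always and $g(G)\le\min\{\h_0(G),\h_1(G)\}$ when $xy\notin E(G)$, so the content of the theorem is the reverse inequality.

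For the lower bound I would take a minimum genus embedding $\Pi$ of $G$ on $\SS_g$ with $g=g(G)$ and cut along the separation $\{x,y\}$. Removing small disks around $x$ and $y$ leaves a genus-$g$ surface with two boundary circles along which $\pi_x$ and $\pi_y$ appear split into arcs of $G_1$-edges and arcs of $G_2$-edges; I would then separate this surface into a subsurface carrying $G_1$ and one carrying $G_2$, cap the pieces off (letting a capping arc carry the virtual edge $xy$ where that is advantageous), and recover embeddings of $G_1$ or $G_1^+$ and of $G_2$ or $G_2^+$. The crux is to control the genus spent in the capping, and I expect a dichotomy governed by how the $G_1$- and $G_2$-edges interleave around $x$ and $y$ and by how the incident $\Pi$-faces are paired up: in the ``clean'' configurations the cut is separating and one reads off $g\ge\gp(G_1)+\gp(G_2)-\ep(G_1)\ep(G_2)=\h_1(G)$, with the $\ep$-term recording exactly the case in which both pieces can be taken $xy$-alternating so that a handle is saved on re-amalgamation; in the remaining configurations the cut is non-separating, an extra handle is unavoidable, and $g\ge g(G_1)+g(G_2)+1=\h_0(G)$, which is possible only because $xy\notin E(G)$ there. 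Performing this case analysis carefully --- matching the genus deficits to the parameters $\theta$ and $\e$ rather than to a cruder combinatorial quantity --- is the main obstacle, and is in essence the argument of Decker, Glover and Huneke, so one may alternatively just cite~\cite{decker-1985} here.

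Given the formula, statement~(ii) is arithmetic: $G^+$ contains $xy$ and has the same parts $G_1,G_2$, so $g(G^+)=\h_1(G^+)=\h_1(G)$; hence $\theta(G)=g(G^+)-g(G)$ equals $0$ if $xy\in E(G)$ and otherwise equals $\h_1(G)-\min\{\h_0(G),\h_1(G)\}=\max\{0,\,\eta(G_1,G_2)-1\}$ by~(\ref{eq-eta}), which is $1$ precisely when $\eta(G_1,G_2)=2$ since $\eta(G_1,G_2)\le 2$. For statement~(i), note that $G^+$ is the edge-amalgam of $G_1^+$ and $G_2^+$ and that $g(G^+)=\h_1(G)$ is attained by one of the two combining constructions on their minimum genus embeddings: if exactly one of $G_1^+,G_2^+$ admits an $xy$-alternating minimum genus embedding, the construction of~(\ref{eq-same-face}) can be arranged to produce a $\Pi$-face with $(x,y,x,y)$ as a cyclic subsequence, so $\ep(G)=1$; if both do, the handle-saving construction~(\ref{eq-alternating}) is forced and the surgery of the lower-bound step, applied to a hypothetical $xy$-alternating face, shows that no minimum genus embedding of $G^+$ can then be $xy$-alternating, so $\ep(G)=0$; and if neither does, no minimum genus embedding of either part --- hence, by the same surgery, none of $G^+$ --- is $xy$-alternating, so again $\ep(G)=0$. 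This gives $\ep(G)=1\iff\ep(G_1)\neq\ep(G_2)$ and completes the proof.
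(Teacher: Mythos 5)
The paper does not prove Theorem~\ref{th-decker-ori}: it is stated as an imported result, attributed to Decker, Glover and Huneke~\cite{decker-1985}, and the constructions in Cases 1--3 preceding it supply only the \emph{upper} bounds $g(G)\le\h_0(G)$, $g(G)\le\h_1(G)$. So there is no in-paper proof to compare your proposal against; the relevant standard is whether your sketch is sound on its own.

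Your treatment of the upper bounds is exactly the paper's, and your derivation of (ii) is correct given the genus formula: applying the formula to $G^+$ gives $g(G^+)=\h_1(G^+)=\h_1(G)$, and then $\theta(G)=\h_1(G)-\min\{\h_0(G),\h_1(G)\}=\max\{0,\eta(G_1,G_2)-1\}$ when $xy\notin E(G)$, which is $1$ precisely when $\eta(G_1,G_2)=2$. Deferring the lower bound to~\cite{decker-1985} is also reasonable, since that is in effect what the paper does.

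The weak point is (i). Unlike (ii), it does not reduce to arithmetic on $g(G)$ and $g(G^+)$: $\ep(G)$ is a structural property of the minimum genus embeddings of $G^+$, not a function of their genus alone, and your argument for it leans on the very surgery you left unsketched. Concretely, the specific claim that the Case-3 construction ``can be arranged to produce a $\Pi$-face with $(x,y,x,y)$ as a cyclic subsequence'' is not something one can read off the paper's description of that case: as written, Case~3 merges $W_1=W_2$ and $W_3=W_4$ into two new faces each containing $x$ once and $y$ once. Getting an alternating face requires a careful choice of which pair of $x$- and $y$-angles to cut, using the alternation already present inside $W_1$, and you do not carry this out. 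Likewise, the ``only if'' direction (if $\ep(G_1)=\ep(G_2)$ then $\ep(G)=0$) needs the full cut-and-cap analysis of~\cite{decker-1985} applied to a hypothetical alternating face of $G^+$. So (i) is not a corollary of the formula plus the displayed constructions; it genuinely requires the cited structural argument, and your sketch of that part is incomplete.
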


Often, we consider minor-operations in the graph $G_1$ while the graph $G_2$ is fixed.
When $\ep(G_2) = 1$, the genus of $G$ depends on the graph parameter $\galt = g - \epsilon$, called the \df{alternating genus} of $G$.
Let $\gap = \gp - \ep$ be the graph parameter defined as $\gap(G) = \galt(G^+) = \gp(G) - \ep(G)$.
If we know the value of the parameter $\ep(G_2)$, then we can express $\h_1(G)$ as follows.
If $\ep(G_2) = 1$, then~(\ref{eq-face}) can be rewritten as
\begin{equation}
\h_1(G) = \gap(G_1) + \gp(G_2).\label{eq-ori-galt}
\end{equation}
Else, (\ref{eq-face}) is equivalent to
\begin{equation}
\h_1(G) = \gp(G_1) + \gp(G_2).\label{eq-ori-plus}
\end{equation}

The next lemma shows that alternating genus is a minor-monotone graph parameter.

\begin{lemma}
\label{lm-galt-minor}
  Let $G \in \G_{xy}$. If $H$ is a minor of $G$, then $\galt(H) \le \galt(G)$.
\end{lemma}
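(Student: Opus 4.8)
The plan is to verify that $\galt = g - \e$ does not increase under a single minor-operation $\mu \in \M(G)$, and then iterate, since every minor of $G \in \G_{xy}$ is reached by a finite sequence of such operations and the underlying class $\G_{xy}$ is minor-closed by construction. Since $g$ is minor-monotone, the only danger is that deleting or contracting an edge turns a non-$xy$-alternating graph into an $xy$-alternating one while keeping the genus the same: that would make $\e$ jump from $0$ to $1$ without a compensating drop in $g$, violating monotonicity. So the heart of the matter is: if $\mu G$ is $xy$-alternating and $g(\mu G) = g(G)$, then $G$ is already $xy$-alternating. Equivalently, from a minimum-genus $xy$-alternating embedding $\Pi'$ of $\mu G$ one must produce a minimum-genus $xy$-alternating embedding of $G$.

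First I would dispose of the easy case $g(\mu G) < g(G)$, where $\galt(\mu G) \le g(\mu G) \le g(G) - 1 \le \galt(G) + 1 - 1 = \galt(G)$ using $\e \le 1$; actually more carefully $\galt(\mu G) = g(\mu G) - \e(\mu G) \le g(\mu G) \le g(G)-1 \le g(G) - \e(G) + (1 - \e(\mu G))$... I will just note $\galt(\mu G) \le g(\mu G) \le g(G) - 1 \le \galt(G)$ since $\e(G) \le 1$ and $\e(\mu G) \ge 0$ give $\galt(G) \ge g(G) - 1$. Then I turn to $g(\mu G) = g(G)$ and split on the type of $\mu$. If $\mu = (e,-)$ is a deletion, take a minimum-genus $xy$-alternating embedding of $G - e$ and re-insert $e$ into one of its faces; a standard fact (see \cite{mohar-book}) is that an edge can be added to an embedding inside a single face without changing the genus, and one can choose to insert $e$ into a face disjoint from the $xy$-alternating portion of the facial walk witnessing alternation — more precisely, adding $e$ splits one face into two and leaves all other faces (in particular the $xy$-alternating one, if $e$ is not on it) intact; if $e$ happens to be forced onto that face, a short case analysis on where the endpoints of $e$ sit relative to the cyclic pattern $(x,y,x,y)$ shows the alternating subsequence survives in one of the two resulting faces. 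The resulting embedding of $G$ has genus $g(G-e) = g(G)$, so it is a minimum-genus embedding, and it is $xy$-alternating, giving $\e(G) = 1 \ge \e(\mu G)$, hence $\galt(G) = g(G) - 1 \ge g(\mu G) - \e(\mu G) = \galt(\mu G)$.

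For the contraction case $\mu = (e,/)$ the argument is similar but requires care because contraction can merge local rotations and, if $e$ is incident to a terminal, moves terminal status; I would take a minimum-genus $xy$-alternating embedding of $G/e$ and "uncontract" $e$ by splitting the vertex $v_e$ back into the two endpoints of $e$, distributing the local rotation at $v_e$ into two arcs joined by $e$ — this is the inverse of the standard contraction-in-an-embedding operation and does not change the genus (it is again a face-splitting/identification move). When $e$ is incident to the terminal $x$, the contracted vertex is the terminal, and after uncontracting one endpoint is $x$; one checks that the vertex can be split so that $x$ retains the angles carrying the alternating pattern, so alternation is preserved. The same Euler-formula bookkeeping then yields $\galt(G) \le \galt(G/e)$.

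I expect the main obstacle to be the contraction case, specifically controlling the cyclic pattern $(x,y,x,y)$ on the facial walk when a terminal vertex is split: the $x$-occurrences in the witnessing face of $G/e$ may be "split across" the two halves of $v_e$, and one must argue that there is a choice of splitting the rotation that keeps (at least) two $x$-corners and two $y$-corners in the correct cyclic order on a common face. This is a finite combinatorial check on the possible interleavings, and I would handle it by writing the facial walk of $G/e$ through the contracted vertex explicitly and examining where $e$ (equivalently, the two arcs of $\Pi'(v_e)$) can be inserted; in all interleavings one of the resulting faces retains the pattern. The deletion case and the $g(\mu G) < g(G)$ case are routine by comparison.
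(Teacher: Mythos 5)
Your reduction to the ``heart of the matter'' gets the direction of the implication backwards, and this breaks the whole argument. Writing $\galt = g - \e$ and using that $g$ is minor-monotone, the only way $\galt(\mu G) > \galt(G)$ can happen is if $g(\mu G) = g(G)$ together with $\e(G) = 1$ and $\e(\mu G) = 0$: the alternating property is \emph{lost} when passing to the minor. So what must be shown is that if $G$ is $xy$-alternating and the genus does not drop, then $\mu G$ is still $xy$-alternating; equivalently, one pushes a min-genus $xy$-alternating embedding of $G$ \emph{forward} to an embedding of $\mu G$ and checks the alternating face survives (which is what the paper does). You instead set out to prove the converse --- that if $\mu G$ is $xy$-alternating with the same genus then $G$ is --- by lifting an embedding of $\mu G$ backward to $G$. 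That statement, even if true, does not rule out the bad case: an $\e$ jump from $0$ to $1$ only makes $\galt$ drop further and never violates monotonicity, whereas the dangerous drop from $1$ to $0$ is left unaddressed.

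There is also a secondary problem with the deletion half of your lifting construction: you cannot, in general, re-insert a deleted edge $e$ into an arbitrary minimum-genus embedding of $G-e$ without increasing the genus, because the two endpoints of $e$ need not be cofacial in that embedding. Knowing $g(G) = g(G-e)$ only guarantees that \emph{some} min-genus embedding of $G-e$ admits such an insertion, not the particular $xy$-alternating one you chose; the uncontraction step avoids this issue (vertex splitting never raises genus), but the deletion step does not. The fix is to run the argument in the direction the paper does: start from an $xy$-alternating min-genus $\Pi$ of $G$, induce $\Pi'$ on $\mu G$ (for deletion, observe that if $e$ were singular the genus would drop, a contradiction, so two faces merge and $W_a$ or its trace persists; for contraction, replace occurrences of $u,e,v$ by $v_e$ in $W_a$, using $e \ne xy$), and conclude $\e(\mu G) = 1$.
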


\begin{proof}
  If $g(H) < g(G)$ or $\epsilon(H) \ge \epsilon(G)$, then the result trivially holds.
  Hence if the claimed inequality is violated, then $g(H) = g(G)$, $\epsilon(H) = 0$, and $\epsilon(G) = 1$.
  Thus, there is an $xy$-alternating minimum genus embedding $\Pi$ of $G$. Let $W_a$ be an $xy$-alternating $\Pi$-face.

  We may assume without loss of generality that $H$ is obtained from $G$ by a single minor-operation.
  Suppose first that $H = G - e$ for some edge $e \in E(G)$.
  Let $\Pi'$ be the embedding of $H$ induced by $\Pi$.
  If $e$ is a singular edge that appears in a $\Pi$-face $W$, then $W$ is split into two $\Pi'$-faces in $\Pi'$.
  Thus $g(H) \le g(\Pi') = g(\Pi) - 1 = g(G) - 1$ which contradicts the assumption that $g(H) = g(G)$.
  Hence $e$ appears in two different $\Pi$-faces $W_1$ and $W_2$.
  The faces $W_1$ and $W_2$ combine to form a single $\Pi'$-face $W'$ in $\Pi'$.
  Thus $g(\Pi') = g(\Pi)$. As either $W_a$ is a $\Pi'$-face or $W_a - e$ is a subsequence of $W'$,
  we conclude that $\Pi'$ is also $xy$-alternating. This contradicts the assumption that $g(H) = g(G)$ and $\e(H) = 0$.
  
  Suppose now that $H = G/e$ for some edge $e \in E(G)$.
  Let $\Pi'$ be the induced embedding of $H$ obtained from $\Pi$ by contracting $e$.
  That is, the local rotation $\Pi'(v_e)$ around the vertex $v_e$ obtained by contraction of $e = uv$
  is set to be the concatenation of the linear sequences obtained from $\Pi(u)$ and $\Pi(v)$ by cutting them at $e$.
  If $e$ does not appear in $W_a$, then $W_a$ is also a $\Pi'$-face.
  Otherwise, as $e \not= xy$, $\Pi'$ contains a facial walk $W_a'$ that can be obtained from $W_a$ by replacing each (of at most 2) occurrence of $u, e, v$
  by $v_e$. It is immediate that $W_a'$ is an $xy$-alternating $\Pi'$-face. This again contradicts the choice of~$H$.
\end{proof}

The following lemma shows how the property of being $xy$-alternating can be expressed in terms of $\theta(G)$ and $\e(G^+)$.

\begin{lemma}
  \label{lm-alt-equiv}
  Let $G \in \Gcxy$.
  The graph $G$ is $xy$-alternating if and only if $\theta(G) = 0$ and $G^+$ is $xy$-alternating.
  In symbols, $\epsilon(G) = 1$ if and only if $\theta(G) = 0$ and $\ep(G) = 1$.
\end{lemma}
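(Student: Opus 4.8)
The statement to prove is Lemma~\ref{lm-alt-equiv}: for $G \in \Gcxy$, we have $\e(G) = 1$ if and only if $\theta(G) = 0$ and $\ep(G) = 1$. The plan is to prove the two implications separately, working directly with embeddings and their facial walks.

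For the forward direction, assume $G$ is $xy$-alternating, so there is a minimum genus embedding $\Pi$ of $G$ with a $\Pi$-face $W$ containing $(x,y,x,y)$ as a cyclic subsequence. First I would argue $\theta(G) = 0$: since $x$ and $y$ both appear on the common face $W$, I can add the edge $xy$ into that face (inserting $xy$ into $W$ between one of the $x$-occurrences and the following $y$-occurrence, which amounts to splicing $xy$ into the local rotations at $x$ and $y$ at the appropriate angles). This yields an embedding of $G^+$ of the same genus, so $\gp(G) \le g(G)$, hence $\theta(G) = 0$ and $\g(G^+) = g(G)$. Moreover the face $W$ is split by the inserted edge into one face still containing a $(y,x,y)$ pattern together with the new edge $xy$ — more carefully, inserting $xy$ splits $W$ into two faces, and since $W$ had the cyclic pattern $x \cdots y \cdots x \cdots y$, one of the two resulting faces still contains an occurrence of $x$, the edge $xy$, and an occurrence of $y$ in alternating fashion, so $G^+$ has a minimum genus embedding that is $xy$-alternating. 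Hence $\ep(G) = 1$.

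For the reverse direction, assume $\theta(G) = 0$ and $\ep(G) = 1$. Then there is a minimum genus embedding $\Pi^+$ of $G^+$ that is $xy$-alternating, and $g(G^+) = g(G)$. Now delete the edge $xy$ from $\Pi^+$ to get an embedding $\Pi$ of $G$; this has genus at most $g(G^+) = g(G)$, hence exactly $g(G)$, so $\Pi$ is a minimum genus embedding of $G$. It remains to check $\Pi$ is $xy$-alternating. The edge $xy$ lies on one or two faces of $\Pi^+$. If $xy$ is non-singular, lying on two distinct faces $W_1, W_2$, then deleting it merges $W_1$ and $W_2$ into a single face $W'$; since the $xy$-alternating face $W_a$ of $\Pi^+$ must be one of $W_1, W_2$ (as $xy$ appears on it), and $W_a$ contains the cyclic pattern $(x,y,x,y)$ with $xy$ being one of the edges between an $x$ and a $y$, the merged face $W'$ still contains an alternating occurrence $x \cdots y \cdots x \cdots y$ after $xy$ is removed and the other face spliced in at that point. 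If $xy$ is singular it appears twice on one face $W$; then $W = W_a$ and removing $xy$ splits $W$ into two faces, but tracking the four marked occurrences of $x$ and $y$ one checks the alternation is preserved on one of the pieces — actually the cleaner route here is to note that if $xy$ is singular then deleting it would reduce the number of faces, contradicting $g(\Pi) = g(\Pi^+)$; so in fact $xy$ is non-singular and only the first case occurs. Either way $\Pi$ is $xy$-alternating, so $\e(G) = 1$.

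The main obstacle is the bookkeeping in the reverse direction: one must carefully verify that when $xy$ is deleted and two faces are merged, the cyclic alternating pattern $(x,y,x,y)$ survives on the new facial walk, which requires knowing \emph{where} on the alternating face $W_a$ the edge $xy$ sits relative to the four marked terminal-occurrences. The key observation that makes this clean is the genus-counting argument forcing $xy$ to be non-singular (deleting a singular edge strictly increases the face count and hence decreases genus, which is impossible since $g(G) = g(G^+)$), after which the merge of the two faces along $xy$ is a single well-understood operation and the alternation is straightforward to track.
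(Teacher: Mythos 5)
Your proof follows essentially the same strategy as the paper's: insert $xy$ into the alternating face for the forward direction, and for the converse use $g(G)=g(G^+)$ to show $xy$ is non-singular in the alternating minimum-genus embedding of $G^+$, then track the pattern $(x,y,x,y)$ through the face merge when $xy$ is deleted.

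One small unjustified step in the reverse direction: you assert that the alternating face $W_a$ ``must be one of $W_1, W_2$ (as $xy$ appears on it).'' That parenthetical is not justified and is false in general --- an $xy$-alternating face need only contain $(x,y,x,y)$ as a cyclic vertex-subsequence and need not carry the edge $xy$ at all. The missed case $W_a \notin \{W_1, W_2\}$ is trivial (deleting $xy$ leaves $W_a$ untouched), and the paper handles both cases in one clause: ``either $W$ is a $\Pi'$-face or $W-xy$ is a subsequence of a $\Pi'$-face.'' There is also a slip in your first statement of the singularity argument, where you say deleting a singular edge would ``reduce the number of faces'' --- it increases the face count (and hence decreases genus), as you correctly state in your closing paragraph.
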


\begin{proof}
  Assume that $G$ is $xy$-alternating and let $\Pi$ be an $xy$-alternating embedding of $G$ of genus $g(G)$.
  By embedding the edge $xy$ into the $xy$-alternating $\Pi$-face, we obtain 
  an embedding of $G^+$ into the same surface that is also $xy$-alternating.
  This shows that $\theta(G) = 0$ and $\ep(G) = 1$.

  For the converse, assume that $\theta(G) = 0$ and that $G^+$ is $xy$-alternating.
  Let $\Pi$ be an $xy$-alternating embedding of $G^+$ with an $xy$-alternating $\Pi$-face $W$. 
  Since $\theta(G) = 0$, the edge $xy$ is not a singular edge.
  Thus by deleting $xy$ from $\Pi$, we obtain an embedding $\Pi'$ of $G$ in the same surface where either $W$ is a $\Pi'$-face
  or $W - xy$ is a subsequence of a $\Pi'$-face. Hence $\Pi'$ is an $xy$-alternating embedding of $G$.
  Since $g(\Pi') = g(G)$, the graph $G$ is $xy$-alternating.
\end{proof}

Fig.~\ref{fg-poset} shows the relationship between the parameters $\g, \gp, \galt$, and $\gap$. In addition to the constraints given
in the figure, there is one more interrelationship that is described by the following lemma.

\begin{lemma}
  \label{lm-galt-constraint}
  For a graph $G \in \G_{xy}$, we have either $\galt(G) = g(G)$ or $\galt(G) = \gap(G)$.
\end{lemma}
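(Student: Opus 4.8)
The plan is a short case analysis on the value of $\e(G)\in\{0,1\}$, using Lemma~\ref{lm-alt-equiv} to dispose of the nontrivial case.

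\emph{Case $\e(G)=0$.} Then directly from the definition $\galt(G)=g(G)-\e(G)=g(G)$, so the first alternative holds and there is nothing more to prove.

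\emph{Case $\e(G)=1$.} Here I will show that $\galt(G)=\gap(G)$, splitting further according to whether $xy\in E(G)$. If $xy\in E(G)$, then $G^+=G$, hence $\gp(G)=g(G)$, $\theta(G)=0$, and $\ep(G)=\e(G^+)=\e(G)=1$; therefore $\gap(G)=\gp(G)-\ep(G)=g(G)-1=g(G)-\e(G)=\galt(G)$. If instead $xy\notin E(G)$, i.e.\ $G\in\Gcxy$, then Lemma~\ref{lm-alt-equiv} applies: since $G$ is $xy$-alternating it gives $\theta(G)=0$ and $\ep(G)=1$, so $\gp(G)=g(G)+\theta(G)=g(G)$ and again $\gap(G)=\gp(G)-\ep(G)=g(G)-1=g(G)-\e(G)=\galt(G)$.

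There is no real obstacle; the only point requiring a little care is that Lemma~\ref{lm-alt-equiv} is stated for $G\in\Gcxy$ only, which is why the subcase $xy\in E(G)$ must be treated separately — and that subcase is immediate because $G^+=G$ forces $\theta(G)=0$ and $\ep(G)=\e(G)$. Equivalently, one could merge the cases by noting $\galt(G)-g(G)=-\e(G)$ while $\gap(G)-g(G)=\theta(G)-\ep(G)$, and then checking (via Lemma~\ref{lm-alt-equiv}, together with $\theta(G)=0$ and $\ep(G)=\e(G)$ in the presence of the edge $xy$) that these two differences agree whenever $\e(G)=1$.
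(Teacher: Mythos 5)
Your proof is correct and takes essentially the same route as the paper: dispose of $\e(G)=0$ trivially, and for $\e(G)=1$ compute $\gap-\galt=\e+\theta-\ep=0$ using Lemma~\ref{lm-alt-equiv}. The one thing you do that the paper does not is explicitly separate the subcase $xy\in E(G)$, where Lemma~\ref{lm-alt-equiv} does not literally apply (it is stated for $G\in\Gcxy$) but where $G=G^+$ makes $\theta(G)=0$ and $\ep(G)=\e(G)$ immediate; this is a genuine, if small, improvement in rigor over the paper's proof, which invokes the lemma without comment.
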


\begin{proof}  
  If $\epsilon(G) = 0$, then $\galt(G) = g(G)$ and we are done.
  Otherwise, $\e(G) = 1$ and Lemma~\ref{lm-alt-equiv} gives that $\e^+(G) = 1$ and $\theta(G) = 0$.
  Therefore, $\gap(G) = \galt(G) + \e(G) + \theta(G) - \ep(G) = \galt(G)$.
\end{proof}

\begin{figure}
  \centering
  \includegraphics{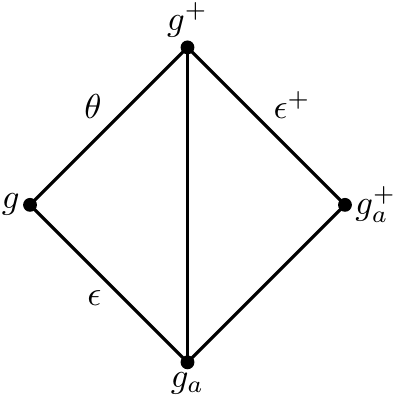}
  \caption{Hasse diagram showing relations of several graph parameters. An edge indicates that the values of parameters differ by at most one
    and the parameter below is bounded from above by the parameter above.}
  \label{fg-poset}
\end{figure}
For a graph parameter $\P$, we say that a minor-operation $\mu \in \M(G)$ \df{decreases} $\P$ by $k$
if $\P(\mu G) \le \P(G) - k$. The subset of $\M(G)$ that decreases $\P$ by $k$ is denoted by $\dc k {\P,G}$.
We write just $\dc k \P$  when the graph is clear from the context.

We shall show that each minor-operation in a 2-connected minor-tight part of an $xy$-sum decreases at least one of the graph parameters $g$, $\gp$, and $\gap$.
Note that several parameters can be decreased by a single minor-operation and it depends on the relations between the parameters.
For example, if $G$ is $K_{3,3}$ with the terminals that are non-adjacent and we consider an edge $e$ of $G$,
then the contraction $(e, /)$ belongs both to $\dc1 g$ and $\dc1 \gp$ as $g(G / e) = \gp(G / e) = 0$.
But $G$ is $xy$-alternating (see Fig.~\ref{fg-kuratowski-alt}), so $\galt(G) = \gap(G) = 0$ and $(e, /)$ belongs neither to $\dc1 \galt$ nor to $\dc1 \gap$.

Two graph parameters $\P$ and $\Q$ are \df{$1$-separated} (in this order) if
$\P(G) \le \Q(G) \le \P(G) + 1$ for all graphs $G \in \G_{xy}$.
If $\L = \Q - \P$, then we also say that $\P$ and $\Q$ are 1-separated by $\L$.
If $\P$ and $\Q$ are 1-separated by $\L$, then it is easy to see that the following holds for each $k$ and each $G \in \G_{xy}$:
\begin{enumerate}[label=\rm(S\arabic*)]
\item 
  If $\L(G) = 1$, then $\dc k \P \ss \dc k \Q$;
\item 
  if $\L(G) = 0$, then $\dc k \Q \ss \dc k \P$;
\item 
  $\dc{k+1} \P \ss \dc k \Q$ and
  $\dc{k+1} \Q \ss \dc k \P$.
\end{enumerate}

Several graph parameters defined above are 1-separated (see~Fig.~\ref{fg-poset}).
The parameters $g$ and $\gp$ are 1-separated by the parameter $\theta$, 
$\galt$ and $g$ by $\epsilon$, $\gap$ and $\gp$ by $\epsilon^+$,
and $\galt$ and $\gap$ are 1-separated by the parameter $\epsilon + \theta - \epsilon^+$.
We shall prove formally only that $\galt$ and $\gp$ are 1-separated by $\epsilon + \theta$.
As $\gp = \galt + \e + \theta$ it is enough to show the following.

\begin{lemma}
\label{lm-galt-gp-sep}
For any graph $G \in \G_{xy}$, we have that
$$\gp(G) \le \galt(G) + 1.$$
\end{lemma}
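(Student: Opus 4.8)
The plan is to rewrite the claimed inequality in terms of the parameters $\theta$ and $\epsilon$. Since $\galt(G) = g(G) - \epsilon(G)$ and $\theta(G) = \gp(G) - g(G)$, the bound $\gp(G) \le \galt(G) + 1$ is equivalent to $\theta(G) + \epsilon(G) \le 1$. Because $\theta(G) \in \{0,1\}$ and $\epsilon(G) \in \{0,1\}$, the only way this can fail is if $\theta(G) = \epsilon(G) = 1$, so it suffices to rule that out.

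First I would handle the case $xy \in E(G)$ separately: here $G^+ = G$, hence $\theta(G) = 0$ by definition and there is nothing more to prove. So I may assume $G \in \Gcxy$, and then Lemma~\ref{lm-alt-equiv} applies directly: it asserts $\epsilon(G) = 1$ if and only if $\theta(G) = 0$ and $\ep(G) = 1$, so in particular $\epsilon(G) = 1$ forces $\theta(G) = 0$. Together with the trivial edge-present case this gives $\theta(G) + \epsilon(G) \le 1$ in all cases, and hence $\gp(G) \le \galt(G) + 1$. If a self-contained argument is preferred, one can instead note that when $\epsilon(G) = 0$ the bound is immediate from $\gp(G) = g(G) + \theta(G) \le g(G) + 1 = \galt(G) + 1$, and when $\epsilon(G) = 1$ one takes an $xy$-alternating embedding $\Pi$ of $G$ of genus $g(G)$ with an $xy$-alternating face $W$ and inserts the edge $xy$ as a chord in the disk bounded by $W$ (possible since both $x$ and $y$ lie on $W$), obtaining an embedding of $G^+$ of the same genus, so $\gp(G) \le g(G) = \galt(G)+1$.

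The only real point of care — there is no genuine topological obstacle here — is the bookkeeping: checking that the definitions of $\galt$, $\gp$, $\theta$, and $\epsilon$ line up so that the inequality truly is equivalent to $\theta(G) + \epsilon(G) \le 1$, and remembering that Lemma~\ref{lm-alt-equiv} is stated only for $G \in \Gcxy$, so the case $xy \in E(G)$ must be dealt with first. Everything else is contained in Lemma~\ref{lm-alt-equiv} (or, in the alternative route, in the elementary observation that a chord joining two vertices on a facial walk can be added without changing the genus).
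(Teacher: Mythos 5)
Your proof is correct and lands on the same underlying fact as the paper, but organizes it a bit differently. The paper's proof routes through Lemma~\ref{lm-galt-constraint} (either $\galt(G) = g(G)$ or $\galt(G) = \gap(G)$) and then finishes each case using $\theta(G) \le 1$ and $\ep(G) \le 1$. You instead reduce the inequality to the crisp form $\theta(G) + \epsilon(G) \le 1$ and invoke Lemma~\ref{lm-alt-equiv} directly, after first disposing of the $xy \in E(G)$ case — a split that is indeed needed, since Lemma~\ref{lm-alt-equiv} is stated only for $\Gcxy$. Since Lemma~\ref{lm-galt-constraint} is itself proved from Lemma~\ref{lm-alt-equiv}, your route essentially inlines one step of the paper's dependency chain; both are equally short. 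Your self-contained alternative --- when $\epsilon(G)=1$, insert the edge $xy$ as a chord in an $xy$-alternating face of a minimum-genus embedding of $G$ --- is genuinely more elementary, avoiding the preliminary lemmas entirely, and is arguably the cleanest of the three arguments.
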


\begin{proof}
  By Lemma~\ref{lm-galt-constraint},
  either $\galt(G) = \g(G)$ or $\galt(G) = \gap(G)$.
  In the former case, $\gp(G) = \g(G) + \theta(G) \le \galt(G) + 1$.
  In the latter case, $\gp(G) = \gap(G) + \ep(G) \le \galt(G) + 1$.
\end{proof}

Using the new notation we can state the following corollary of Lemma~\ref{lm-galt-constraint}.

\begin{corollary}
\label{cr-galt}
 For each $G \in \G_{xy}$,  $\dc1 \galt \ss \dc1 g \cup \dc1 \gap$.
\end{corollary}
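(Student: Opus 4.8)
The plan is to obtain the inclusion as an immediate consequence of Lemma~\ref{lm-galt-constraint}, but applied not to $G$ itself but to the smaller graph $\mu G$. Fix $G \in \G_{xy}$ and a minor-operation $\mu \in \dc1\galt$; by the definition of $\dc1{\cdot}$ this simply means $\galt(\mu G) \le \galt(G) - 1$. Alongside this I will use two pointwise inequalities that are built into the definitions and recorded in Fig.~\ref{fg-poset}: first, $\galt \le g$, since $g = \galt + \epsilon$ with $\epsilon \ge 0$; and second, $\galt \le \gap$, since $\galt$ and $\gap$ are $1$-separated in that order.

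Next I apply Lemma~\ref{lm-galt-constraint} to the graph $\mu G$, obtaining the dichotomy $\galt(\mu G) = g(\mu G)$ or $\galt(\mu G) = \gap(\mu G)$. In the first case, chaining inequalities gives $g(\mu G) = \galt(\mu G) \le \galt(G) - 1 \le g(G) - 1$, so $\mu \in \dc1 g$. In the second case the same chain with $\gap$ in place of $g$ gives $\gap(\mu G) = \galt(\mu G) \le \galt(G) - 1 \le \gap(G) - 1$, so $\mu \in \dc1\gap$. In either case $\mu \in \dc1 g \cup \dc1\gap$, which is the assertion.

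The only point that calls for care — and essentially the only place a naive attempt could go astray — is that the dichotomy of Lemma~\ref{lm-galt-constraint} must be invoked for $\mu G$, the graph \emph{after} the operation, not for $G$. Invoking it for $G$ instead leads to a case split on the value of $\galt(G)$ in which one has to track $\epsilon$, $\theta$, and $\epsilon^+$ of $\mu G$ through Lemma~\ref{lm-alt-equiv}; that route also works but is longer and hides the trivial nature of the statement. I do not foresee a genuine obstacle: once the right graph is fed into Lemma~\ref{lm-galt-constraint}, the corollary is a one-line consequence, since the two horns of that lemma's alternative are exactly what force membership in $\dc1 g$ or in $\dc1\gap$.
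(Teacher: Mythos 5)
Your proof is correct and takes essentially the same approach as the paper: both apply Lemma~\ref{lm-galt-constraint} to the graph $\mu G$ and chain the inequalities $\galt \le g$ and $\galt \le \gap$; the paper merely phrases it as a contradiction argument, while you give the equivalent direct version.
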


\begin{proof}
  Let $\mu \in \dc1{\galt}$. 
  If $\mu \not\in \dc1 g \cup \dc1 \gap$, then $g(\mu G) = g(G) > \galt(\mu G)$ and $\gap(\mu G) = \gap(G) > \galt(\mu G)$,
  which contradicts Lemma~\ref{lm-galt-constraint} (for the graph $\mu G$).
\end{proof}

\begin{table}
  \centering
  \begin{tabular}{|c | c | c | c|}
    \hline
    $xy \in E(G)$ & $\ep(G_2)$ & $\eta(G_1, G_2)$ & $\mu$ \\
    \hline
    \multirow{2}{*}{yes}  & 0 & \multirow{2}{*}{---} & $\dc1 \gp$ \\
    & 1 & & $\dc1 \gap$ \\
    \hline
    \multirow{7}{*}{no} & \multirow{3}{*}{0} &  0 & $\dc1 \gp$ \\
     & &  1 & $\dc1 g$ or $\dc1 \gp$ \\
     & &  2 & $\dc1 g$ \\
    \cline{2-4}
    & \multirow{4}{*}{1} & -1 & $\dc1 \gap$\\
    & & 0 & $\dc2 g$ or $\dc1 \gap$ \\
    & & 1 & $\dc1 g$ or $\dc1 \gap$ \\
    & & 2 & $\dc1 g$ or $\dc2 \gap$ \\
    \hline
  \end{tabular}
  \caption{Possible results for a minor-operation in a minor-tight side of a 2-sum of graphs.}
  \label{tb-side-options}
\end{table}

The next lemma describes necessary and sufficient conditions for a single part of a 2-sum of graphs to be minor-tight.
This is a key lemma and its outcome, summarized in Table~\ref{tb-side-options}, will be used heavily throughout this paper.

\begin{lemma}
  \label{lm-sides-mu}
  Let $G$ be an $xy$-sum of connected graphs $G_1$ and $G_2$ and $\mu \in \M(G_1)$ such that $\mu G_1$ is connected.
  Then $g(\mu G) < g(G)$ if and only if the following is true (where $\dc k\cdot$ always refer to the decrease of the parameter in $G_1$):
  \begin{enumerate}[label=\rm(\roman*)]
  \item
    If $xy \in E(G)$, then $\mu \in \dc1 \gp$ if $\ep(G_2) = 0$ and $\mu \in \dc1 \gap$ if $\ep(G_2) = 1$.
  \item
    If $xy \not\in E(G)$ and $\eta(G_1, G_2) = -1$, then $\mu \in \dc1 \gap$.
  \item
    If $xy \not\in E(G)$ and $\eta(G_1, G_2) = 0$, then
    $\mu \in \dc1 \gp$ when $\ep(G_2) = 0$ and
    $\mu \in \dc2 g \cup \dc1 \gap$ when $\ep(G_2) = 1$.
  \item
    If $xy \not\in E(G)$ and $\eta(G_1, G_2) = 1$, then
    $\mu \in \dc1 g \cup \dc1 \gp$ when $\ep(G_2) = 0$ and
    $\mu \in \dc1 g \cup \dc1 \gap$ when $\ep(G_2) = 1$.
  \item
    If $xy \not\in E(G)$ and $\eta(G_1, G_2) = 2$, then
    $\mu \in \dc1 g$ when $\ep(G_2) = 0$ and
    $\mu \in \dc1 g \cup \dc2 \gap$ when $\ep(G_2) = 1$.
  \end{enumerate}
\end{lemma}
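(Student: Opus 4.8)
The plan is to prove the ``only if'' and ``if'' directions together by case analysis according to whether $xy \in E(G)$, and, when $xy\notin E(G)$, according to the value of $\eta(G_1,G_2)\in\{-1,0,1,2\}$, exactly matching the five clauses. The whole argument is bookkeeping on top of Theorem~\ref{th-decker-ori}: we have $g(G)$ equal to $\h_1(G)$ (if $xy\in E(G)$) or to $\min\{\h_0(G),\h_1(G)\}$ (if $xy\notin E(G)$), and similarly $g(\mu G)$ is $\h_1(\mu G)$ or $\min\{\h_0(\mu G),\h_1(\mu G)\}$, where $\h_0$ and $\h_1$ are computed from $\mu G_1$ and the fixed $G_2$. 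So the statement $g(\mu G)<g(G)$ unwinds into a comparison of these min-expressions, and since only the $G_1$-side changes, everything reduces to how $g(G_1),\gp(G_1),\ep(G_1)$ (equivalently $g(G_1),\theta(G_1),\ep(G_1)$) move under $\mu$. I would first record the three elementary facts I will use repeatedly: $g(\mu G_1)\le g(G_1)$, $\gp(\mu G_1)\le \gp(G_1)$, $\gap(\mu G_1)\le\gap(G_1)$ (minor-monotonicity of $g$, $\gp$, and — via Lemma~\ref{lm-galt-minor} applied to $G_1^+$ — of $\gap$), together with the $1$-separation relations among $g,\gp,\gap,\galt$ summarized in Fig.~\ref{fg-poset} and the consequences (S1)--(S3). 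These guarantee that $g(\mu G)\le g(G)$ always, so the question is exactly when the inequality is strict.

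For the case $xy\in E(G)$, clause (i): here $g(G)=\h_1(G)=\gp(G_1)+\gp(G_2)-\ep(G_1)\ep(G_2)$ by~(\ref{eq-face}), and likewise for $\mu G$ with $G_1$ replaced by $\mu G_1$. If $\ep(G_2)=0$ this is $\gp(G_1)+\gp(G_2)$, so $g(\mu G)<g(G)\iff \gp(\mu G_1)<\gp(G_1)\iff \mu\in\dc1\gp$. If $\ep(G_2)=1$ it is $\gp(G_1)-\ep(G_1)+\gp(G_2)=\gap(G_1)+\gp(G_2)$ by~(\ref{eq-ori-galt}), so $g(\mu G)<g(G)\iff\mu\in\dc1\gap$. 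For $xy\notin E(G)$ I would write $g(G)=\min\{\h_0(G),\h_1(G)\}$ with $\h_0(G)=g(G_1)+g(G_2)+1$ and $\h_1(G)=g(G_1)+g(G_2)+\eta(G_1,G_2)$ by~(\ref{eq-eta}); note $\h_1(\mu G)$ uses $\eta(\mu G_1,G_2)$, which can differ from $\eta(G_1,G_2)$ since $\theta$ and $\ep$ of the side may drop. The key structural remark is that $\h_0(G)-\h_1(G)=1-\eta(G_1,G_2)$, so the active term of the $\min$ is governed by $\eta$: when $\eta=2$, $g(G)=\h_1(G)$; when $\eta=-1$, $g(G)=\h_0(G)$; when $\eta\in\{0,1\}$ both agree up to the obvious slack. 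In each of the four $\eta$-subcases, and each value of $\ep(G_2)\in\{0,1\}$, I would substitute the rewritten forms~(\ref{eq-ori-galt})/(\ref{eq-ori-plus}) for $\h_1$ and compare $g(\mu G)$ with $g(G)$; the assertion ``$\mu\in\dc a g\cup\dc b\gap$'' drops out because decreasing $g(G)$ below its current value forces the relevant side parameter to drop by the stated amount, and conversely any such drop is transmitted to $g(G)$ since $G_2$ is fixed and the $\min$ of the two $\h$-values moves accordingly. The relations (S1)--(S3) are exactly what converts ``$g$ drops by $2$'' into ``$\gap$ drops by $1$'' and similar passes between $g,\gp,\gap$, which is how the disjunctions in clauses (iii)--(v) appear.

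The main obstacle, and where care is needed, is the bookkeeping when $\mu$ changes $\eta(G_1,G_2)$, i.e. when $\mu$ alters $\theta(G_1)$ or $\ep(G_1)$ rather than $g(G_1)$: then $g(\mu G)$ may be computed from a \emph{different} branch of the $\min$ than $g(G)$ was, and one must check that the net effect still matches the claimed membership. For instance in clause (iii) with $\ep(G_2)=1$ and $\eta(G_1,G_2)=0$, $g(G)=g(G_1)+g(G_2)$; a $\mu$ with $g(\mu G_1)=g(G_1)$, $\theta(\mu G_1)=\theta(G_1)$, $\ep(\mu G_1)=0$ raises $\eta$ to $1$, so $\h_1(\mu G)$ grows while $\h_0(\mu G)$ is unchanged — no decrease — whereas a $\mu$ reducing $g(G_1)$ by $2$ makes $\h_0$ win and does decrease $g(G)$; these are precisely the possibilities $\dc2 g$ versus $\dc1\gap$ (the latter catching the case $g(G_1)$ drops by $1$ \emph{and} $\ep(G_1)$ stays $1$, i.e. $\gap$ drops by $1$). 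I would organize this by, in each subcase, tabulating the possible triples $(\Delta g(G_1),\Delta\theta(G_1),\Delta\ep(G_1))$ consistent with a strict decrease of $g(G)$ — there are only finitely many since each coordinate moves by a bounded amount — and checking each against the target set, using Lemma~\ref{lm-galt-constraint} and (S1)--(S3) to collapse the list to the stated union of $\dc\cdot\cdot$'s. Since this is the same casework already implicit in Theorem~\ref{th-decker-ori} and Table~\ref{tb-side-options}, I expect it to be routine but lengthy; no genuinely new idea beyond Decker--Glover--Huneke and the separation lemmas is required.
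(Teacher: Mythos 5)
Your plan follows the same route as the paper's proof: reduce everything to Theorem~\ref{th-decker-ori} with $g(G)=\min\{\h_0(G),\h_1(G)\}$ (or $\h_1(G)$ when $xy\in E(G)$), rewrite $\h_1$ via~(\ref{eq-ori-plus}) or~(\ref{eq-ori-galt}) depending on $\ep(G_2)$, then compare $g(\mu G)$ with $g(G)$ case-by-case in $\eta$ and translate the resulting drops between $g$, $\gp$, $\gap$ using the $1$-separation relations (S1)--(S3). The paper organizes the ``only if'' direction by contraposition and merges the $\eta\in\{-1,0,1\}$ cases, while you propose doing each $\eta$-value separately with a table of $(\Delta g,\Delta\theta,\Delta\ep)$; that is a cosmetic difference of bookkeeping, not a different proof.

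One concrete error you must fix before executing the plan: your ``key structural remark'' has the two extreme cases reversed. From $\h_0(G)-\h_1(G)=1-\eta(G_1,G_2)$ one gets that for $\eta=2$ we have $\h_0<\h_1$, so $g(G)=\h_0(G)=g(G_1)+g(G_2)+1$, and for $\eta=-1$ we have $\h_0>\h_1$, so $g(G)=\h_1(G)$. You wrote the opposite. This matters: clause (v) hinges on $g(G)=\h_0(G)$ when $\eta=2$ (so that $\mu\in\dc1 g$ alone decreases $g(G)$, and conversely, when $g(G_1)$ does not drop, one must work with $\h_1(\mu G)$ and pick up the $\dc2\gap$ alternative), and clause (ii) hinges on $g(G)=\h_1(G)=\gap(G_1)+\gp(G_2)$ when $\eta=-1$ (so that a strict decrease forces $\gap(G_1)$ to drop). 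With the roles swapped as stated, the subsequent substitutions would target the wrong formula and the case analysis would not close. Correcting this sign and then carrying out the substitutions you describe recovers exactly the paper's argument.
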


\begin{proof}
  Let us start with the ``only if'' part. 
  Since $\mu G_1$ is connected, Theorem~\ref{th-decker-ori} can be used
  to determine $g(\mu G)$.
  In order to show (i), suppose that $xy \in E(G)$.
  By Theorem~\ref{th-decker-ori}, $g(G)$ and $g(\mu G)$ are equal to $\h_1(G)$
  and $\h_1(\mu G)$, respectively.
  If $\ep(G_2) = 0$, then by~(\ref{eq-ori-plus}),
  $$g^+(\mu G_1) + \gp(G_2) = g(\mu G) < g(G)= \gp(G_1) + \gp(G_2).$$
  Thus $g^+(\mu G_1) < g^+(G_1)$, yielding that $\mu \in \dc1 \gp$.
  If $\ep(G_2) = 1$, then by~(\ref{eq-ori-galt}),
  $$\gap(\mu G_1) + \gp(G_2) = g(\mu G) < g(G) = \gap(G_1) + \gp(G_2).$$
  Thus $\gap(\mu G_1) < \gap(G_1)$, yielding that $\mu \in \dc1 \gap$.

  Assume now that $xy \not\in E(G)$. 
  We will do the cases (ii), (iii) and (iv) together. Assume that $\eta(G_1, G_2) \le 1$.
  If $\ep(G_2) = 0$, let us assume that $\mu \not\in \dc1 \gp$
  and if $\ep(G_2) = 1$, let us assume that $\mu \not\in \dc1 \gap$.
  By~(\ref{eq-ori-plus}) and (\ref{eq-ori-galt}), $\h_1(\mu G) = \h_1(G)$.
  By Theorem~\ref{th-decker-ori}, $g(\mu G) = \h_0(G) < g(G)$.
  By using the definition of $\h_0(G)$ in~(\ref{eq-handle}), we obtain:
  $$\h_0(G) = g(\mu G_1) + g(G_2) + 1 = g(\mu G) < g(G) = g(G_1) + g(G_2) + \eta(G_1, G_2) .$$
  Thus $g(\mu G_1) \le g(G_1) + \eta(G_1, G_2) - 2$.
  If $\eta(G_1, G_2) = -1$, then $\mu \in \dc3 g$ which implies that $\mu \in \dc2 \gp$ by (S3) as $\g$ and $\gp$ are 1-separated.
  By another application of (S3), we obtain that $\mu \in \dc1 \gap$ yielding (ii).
  If $\eta(G_1, G_2) = 0$, then $\mu \in \dc2 g$.
  This proves (iii) when $\ep(G_2) = 1$.
  If $\ep(G_2) = 0$, then also $\mu \in \dc1 \gp$ by (S3).
  This yields (iii).
  If $\eta(G_1, G_2) = 1$, then $\mu \in \dc1 g$ which yields (iv).

  Suppose now that $\eta(G_1, G_2) = 2$ and that $\mu \not\in \dc1 g$. Then $\h_0(G) = \h_0(\mu G)$.
  By Theorem~\ref{th-decker-ori} and~(\ref{eq-eta}), $g(G) = \h_0(G)$.
  Since $g(\mu G) < g(G)$, we conclude that $g(\mu G) = \h_1(\mu G) < g(G)$.
  As $\eta(G_1, G_2) = 2$, we know that $\theta(G_1) = \theta(G_2) = 1$ and $\ep(G_1)\ep(G_2) = 0$.
  Thus we may write 
  $$g(G) = \h_0(G) = g(G_1) + g(G_2) + 1 = \gp(G_1) + \gp(G_2) - 1.$$
  If $\ep(G_2) = 0$, then we obtain using~(\ref{eq-ori-plus}) that
  $$\gp(\mu G_1) + \gp(G_2) = g(\mu G) < g(G) = \gp(G_1) + \gp(G_2) - 1.$$
  Hence $\mu \in \dc2 \gp$ which implies by (S3) that also $\mu \in \dc1 g$, a contradiction.

  If $\ep(G_2) = 1$, then $\ep(G_1) = 0$ and $\gap(G_1) = \gp(G_1)$.
  We use~(\ref{eq-ori-galt}) to obtain that
  $$\gap(\mu G_1) + \gp(G_2) = g(\mu G) < g(G) = \gap(G_1) + \gp(G_2) - 1.$$
  Hence $\mu \in \dc2 \gap$.
  This finishes the ``only if'' part.

  To prove the ``if'' part, we assume that (i)--(v) hold and show that $g(\mu G) < g(G)$.
  We start by proving that if $\mu \in \dc1 g$, $xy \not\in E(G)$, and $\eta(G_1, G_2) \ge 1$, then $g(\mu G) < g(G)$.
  By Theorem~\ref{th-decker-ori}, $g(G) = \h_0(G)$.
  Since $g(\mu G) \le \h_0(\mu G)$, we obtain that
  $$g(\mu G) \le g(\mu G_1) + g(G_2) + 1 < g(G_1) + g(G_2) + 1 = g(G).$$

  If  $\mu \in \dc2 g$, $xy \not\in E(G)$, and $\eta(G_2, G_2) = 0$, we have a similar inequality:
  $$g(\mu G) \le g(\mu G_1) + g(G_2) + 1 < g(G_1) + g(G_2) = g(G).$$

  Similarly, we do the cases when $\mu \in \dc1 \gp$ and when $\mu \in \dc1 \gap$.
  Suppose that $\mu \in \dc1 \gp$, $\ep(G_2) = 0$, and $xy \in E(G)$ or $\eta(G_1, G_2) \le 1$.
  By Theorem~\ref{th-decker-ori}, $g(G) = \h_1(G)$.
  We obtain from Theorem~\ref{th-decker-ori} and~(\ref{eq-ori-plus}) that
  $$g(\mu G) \le \gp(\mu G_1) + \gp(G_2) < \gp(G_1) + \gp(G_2) = g(G).$$

  Suppose now that $\mu \in \dc1 \gap$, $\ep(G_2) = 1$, and $xy \in E(G)$ or $\eta(G_1, G_2) \le 1$.
  We obtain from Theorem~\ref{th-decker-ori} and~(\ref{eq-ori-galt}) that
  $$g(\mu G) \le \gap(\mu G_1) + \gp(G_2) < \gap(G_1) + \gp(G_2) = g(G).$$

  In the remaining case, when $xy \not\in E(G)$, $\eta(G_2, G_2) = 2$, $\ep(G_2)=1$, and $\mu \in \dc2 \gap$, we have a similar inequality:
  $$g(\mu G) \le \gap(\mu G_1) + \gp(G_2) < \gap(G_1) + \gp(G_2) - 1 \le g(G_1) + g(G_2) + 1 = g(G).$$
  This finishes the proof of the lemma.
\end{proof}

Since for each graph precisely one hypothesis in the cases (i)--(v) of Lemma~\ref{lm-sides-mu} holds, we obtain the following corollary.

\begin{corollary}
\label{cr-sides}
  Let $G$ be an $xy$-sum of connected graphs $G_1$ and $G_2$ and $\mu \in \M(G_1)$ such that $\mu G_1$ is connected and $g(\mu G) < g(G)$.
  Then $\mu \in \dc1 g \cup \dc1 \gp \cup \dc1 \gap$.
  Furthermore, if $\ep(G_2) = 0$, then $\mu \in \dc1 g \cup \dc1 \gp$.
\end{corollary}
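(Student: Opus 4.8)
The plan is to derive the statement directly from Lemma~\ref{lm-sides-mu}, collapsing the two second-order decreases that appear there ($\dc2 g$ and $\dc2 \gap$) into first-order decreases by means of the 1-separation properties (S1)--(S3). The key structural observation is that the hypotheses of the five cases (i)--(v) of Lemma~\ref{lm-sides-mu} are mutually exclusive and jointly exhaustive: they split according to whether $xy \in E(G)$ and, in the case $xy \notin E(G)$, according to the value $\eta(G_1, G_2)$, which always lies in $\{-1, 0, 1, 2\}$. Hence for our fixed $G$, $G_1$, $G_2$ exactly one of (i)--(v) applies, and since $g(\mu G) < g(G)$, its conclusion holds for $\mu$.

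Next I would inspect the five conclusions. Cases (i), (ii), and (iv) already place $\mu$ in $\dc1 \gp$, or $\dc1 \gap$, or $\dc1 g \cup \dc1 \gp$, or $\dc1 g \cup \dc1 \gap$, each of which is contained in $\dc1 g \cup \dc1 \gp \cup \dc1 \gap$; case (iii) with $\ep(G_2) = 0$ gives $\mu \in \dc1 \gp$, and case (v) with $\ep(G_2) = 0$ gives $\mu \in \dc1 g$, both again of the required form. The only two conclusions not already in the desired shape are case (iii) with $\ep(G_2) = 1$, namely $\mu \in \dc2 g \cup \dc1 \gap$, and case (v) with $\ep(G_2) = 1$, namely $\mu \in \dc1 g \cup \dc2 \gap$. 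For the first I would use that $g$ and $\gp$ are 1-separated (by $\theta$), so by (S3) one has $\dc2 g \ss \dc1 \gp$, whence $\mu \in \dc1 \gp \cup \dc1 \gap$. For the second, $\dc2 \gap \ss \dc1 \gap$ trivially, so $\mu \in \dc1 g \cup \dc1 \gap$. In all cases $\mu \in \dc1 g \cup \dc1 \gp \cup \dc1 \gap$, which is the first assertion.

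For the ``furthermore'' part I would assume $\ep(G_2) = 0$ and drop the branches of (i), (iii), (iv), (v) that require $\ep(G_2) = 1$, noting in addition that case (ii) cannot occur at all: $\eta(G_1, G_2) = -1$ forces $\theta(G_1) + \theta(G_2) = \ep(G_1)\ep(G_2) - 1$, which (since both sides lie in a suitable range) forces $\theta(G_1) = \theta(G_2) = 0$ and $\ep(G_1)\ep(G_2) = 1$, in particular $\ep(G_2) = 1$. What remains are the $\ep(G_2) = 0$ branches, whose conclusions are $\mu \in \dc1 \gp$ (from (i) and (iii)), $\mu \in \dc1 g \cup \dc1 \gp$ (from (iv)), and $\mu \in \dc1 g$ (from (v)), all of which lie in $\dc1 g \cup \dc1 \gp$.

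I expect no genuinely hard step here: the whole argument is a case check. The one point that requires a moment of attention is verifying that (i)--(v) genuinely partition all configurations, and, for the second part, that $\eta(G_1, G_2) = -1$ is incompatible with $\ep(G_2) = 0$; after that the result is substitution together with the elementary containments $\dc2 g \ss \dc1 \gp$ (an instance of (S3)) and $\dc2 \gap \ss \dc1 \gap$.
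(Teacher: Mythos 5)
Your proposal is correct and follows essentially the paper's approach: the paper gives no separate proof of the corollary beyond the remark that exactly one hypothesis of cases (i)--(v) of Lemma~\ref{lm-sides-mu} holds, and your argument is precisely that case check spelled out, together with the trivial containments $\dc2 g \ss \dc1 g$ and $\dc2 \gap \ss \dc1 \gap$ (your detour through (S3) to get $\dc2 g \ss \dc1 \gp$ works too, but is more than needed). The only genuine extra step, correctly handled, is observing that $\eta(G_1,G_2) = -1$ forces $\ep(G_1)\ep(G_2) = 1$, so case (ii) cannot arise when $\ep(G_2) = 0$.
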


Lemma~\ref{lm-sides-mu} characterizes when a graph with two terminals is a part of an obstruction for an orientable surface.
The next lemma describes when the edge $xy$ is minor-tight in an $xy$-sum of graphs.

\begin{lemma}
\label{lm-edge-xy}
  Let $G$ be an $xy$-sum of connected graphs $G_1$ and $G_2$.
  If $xy \in E(G)$, then the subgraph of $G$ induced by the edge $xy$ is minor-tight 
  if and only if $\eta(G_1, G_2) = 2$ and either $g(G_1 / xy) < \gp(G_1)$ or $g(G_2 / xy) < \gp(G_2)$.
\end{lemma}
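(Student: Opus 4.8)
The plan is to unfold the definition of minor-tightness for the one-edge subgraph and treat its two available minor-operations, $(xy,-)$ and $(xy,/)$, separately. Since $xy \in E(G)$, the subgraph induced by $xy$ is minor-tight if and only if both $g(G-xy) < g(G)$ and $g(G/xy) < g(G)$, so the lemma amounts to translating each of these two inequalities into the stated conditions.

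First I would handle the deletion $(xy,-)$. As $xy \in E(G)$, Theorem~\ref{th-decker-ori} gives $g(G) = \h_1(G)$, whereas $g(G-xy) = \min\{\h_0(G),\h_1(G)\}$; the point is that $\h_0$ and $\h_1$ are functions of the parts $G_1,G_2$ only (see (\ref{eq-handle}) and (\ref{eq-face})), hence their values are unchanged by removing $xy$. Therefore $g(G-xy) < g(G)$ holds if and only if $\h_0(G) < \h_1(G)$, which by (\ref{eq-handle}) and (\ref{eq-eta}) is equivalent to $1 < \eta(G_1,G_2)$, i.e.\ to $\eta(G_1,G_2) = 2$.

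Next I would handle the contraction $(xy,/)$. Contracting $xy$ in $G$ identifies $x$ and $y$ into one vertex $z$; since $G_1$ and $G_2$ are connected and $x,y$ are non-adjacent in each part, every $G_i$ has at least three vertices, so $z$ is a cut-vertex of $G/xy$ and $G/xy$ is exactly the $1$-sum of $G_1/xy$ and $G_2/xy$ at $z$. By Theorem~\ref{th-battle-ori} (additivity of genus over blocks, equivalently over $1$-sums), $g(G/xy) = g(G_1/xy) + g(G_2/xy)$. Moreover, contracting the edge $xy$ in $G_i^+$ produces $G_i/xy$ after simplifying parallel edges, and contracting an edge does not increase the genus, so $g(G_i/xy) \le g(G_i^+) = \gp(G_i)$ for $i = 1,2$. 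Now when $\eta(G_1,G_2) = 2$ we have $\ep(G_1)\ep(G_2) = 0$, hence $g(G) = \h_1(G) = \gp(G_1) + \gp(G_2)$; combining this with the two inequalities above gives $g(G/xy) = g(G_1/xy) + g(G_2/xy) \le g(G)$, with strict inequality if and only if $g(G_1/xy) < \gp(G_1)$ or $g(G_2/xy) < \gp(G_2)$.

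Assembling the pieces settles both directions. For the forward direction, minor-tightness forces $g(G-xy) < g(G)$, hence $\eta(G_1,G_2) = 2$, and then (with $\eta(G_1,G_2)=2$ in hand) $g(G/xy) < g(G)$ forces the disjunction. For the converse, $\eta(G_1,G_2) = 2$ already makes $(xy,-)$ decrease the genus, and, together with the disjunction, makes $(xy,/)$ decrease the genus, so $xy$ is minor-tight. I expect the contraction step to be the only real obstacle: one must see that $G/xy$ genuinely splits as the $1$-sum $G_1/xy \oplus G_2/xy$ so that Theorem~\ref{th-battle-ori} applies, and verify $g(G_i/xy) \le \gp(G_i)$ by passing through $G_i^+$ and using minor-monotonicity of the genus; the deletion step is an immediate consequence of the Decker--Glover--Huneke formula.
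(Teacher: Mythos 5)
Your proof is correct and follows essentially the same route as the paper's: reduce minor-tightness of the edge $xy$ to the two conditions $g(G-xy)<g(G)$ and $g(G/xy)<g(G)$, settle the deletion via the Decker--Glover--Huneke formula, and settle the contraction by recognizing $G/xy$ as a $1$-sum and applying Battle et al.\ together with $g(G)=\gp(G_1)+\gp(G_2)$ and $g(G_i/xy)\le\gp(G_i)$. The only cosmetic difference is in the deletion step: you compare $\h_0$ and $\h_1$ directly from the formulas, whereas the paper invokes Theorem~\ref{th-decker-ori}(ii), which is just a pre-packaged form of the same comparison.
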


\begin{proof}
  By Theorem~\ref{th-decker-ori}(ii), $\theta(G - xy) = 1$ if and only if $\eta(G_1, G_2) = 2$.
  Thus $g(G - xy) < g(G)$ if and only if $\eta(G_1, G_2) = 2$.  We may thus assume that $\eta(G_1, G_2) = 2$. 

  Theorem~\ref{th-battle-ori} implies that
  \[
  g(G / xy) = g(G_1 / xy) + g(G_2 /xy).
  \]
  Since $\ep(G_1)\ep(G_2) = 0$, Theorem~\ref{th-decker-ori} and~(\ref{eq-face}) gives that
  $$g(G) = \h_1(G) = \gp(G_1) + \gp(G_2).$$
  Therefore, $g(G /xy) < g(G)$ if and only if $g(G_1 / xy) + g(G_2 /xy) < \gp(G_1) + \gp(G_2)$.
  Since $g(G_1 /xy) \le \gp(G_1)$ and $g(G_2 /xy) \le \gp(G_2)$, we obtain that
  $g(G /xy) < g(G)$ if and only if $g(G_1 / xy) < \gp(G_1)$ or $g(G_2 / xy) < \gp(G_2)$.
\end{proof}

\section{Critical classes for graph parameters}
\label{sc-classes}

Lemma~\ref{lm-sides-mu} provides necessary and sufficient conditions on the parts of an $xy$-sum for being minor-tight.
In this section, we shall study and categorize graphs that satisfy these conditions.

For a graph parameter $\P$, let $\C(\P)$ denote the family of graphs $G \in \G_{xy}$ such that 
each minor-operation in $G$ decreases $\P$ by at least 1, i.e., $\M(G) = \dc1 \P$.
We call $\C(\P)$ the \df{critical class} for $\P$.
Let $\Cc(\P)$ be the subfamily of $\C(\P)$ of graphs without the edge $xy$.
We refine the class $\C(\P)$ according to the value of $\P$. 
Let $\C_k(\P)$ denote the subfamily of $\C(\P)$ that contains precisely the graphs $G$ for which $\P(G) = k+1$.
The classes $\Cc_k(\P)$ are defined similarly as subfamilies of $\Cc(\P)$.

In this section, we shall study the classes $\Cc(g)$, $\Cc(\gp)$, $\Cc(\galt)$, and $\Cc(\gap)$.
It is easy to see that, for each graph $G \in \Cc_k(g)$, the graph $\hat{G}$ is an obstruction for $\SS_k$.
On the other hand, for each graph $G \in \Forb(\SS_k)$ and two non-adjacent vertices $x$ and $y$ of $G$,
the graph in $\G_{xy}$ obtained from $G$ by making $x$ and $y$ terminals belongs to $\Cc_k(g)$.
Similarly to $\Cc_k(g)$, the family $\Cc_k(\gp)$ can be constructed from the graphs in $\Forb(\SS_k)$.

We shall denote by $\Forb^*(\SS)$ the class of \df{minimal forbidden topological minors} for the surface $\SS$.
The graphs in $\Forb^*(\SS)$ are the minimal graphs (of minimum degree at least 3)  with respect to deletion of edges that are not embeddable into $\SS$.

\begin{lemma}
\label{lm-gp}
Let $G \in \Cc_k(\gp)$. If $\theta(G) = 0$, then $\hat{G} \in \Forb(\SS_k)$.
If $\theta(G) = 1$, then either $\hat{G^+} \in \Forb(\SS_k)$, or $\hat{G^+} \in \Forb^*(\SS_k)$ and $\hat{G / xy} \in \Forb(\SS_k)$.
\end{lemma}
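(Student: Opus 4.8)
The goal is to understand the structure of a graph $G \in \Cc_k(\gp)$, i.e. a graph without the edge $xy$ in which every minor-operation decreases $\gp$. I would split into the two cases $\theta(G) = 0$ and $\theta(G) = 1$ exactly as in the statement.

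\emph{Case $\theta(G) = 0$.} Here $\gp(G) = g(G)$, so $g(G) = k+1$ and $\hat{G}$ is not embeddable in $\SS_k$. To show $\hat{G} \in \Forb(\SS_k)$ I must check that every single minor-operation on $\hat{G}$ yields a graph embeddable in $\SS_k$. A minor-operation $\mu$ on $\hat{G}$ that does not touch $x$ or $y$, or that is the deletion of an edge at a terminal, corresponds directly to a minor-operation in $\M(G)$, which decreases $\gp$ and hence (since $\gp \ge g$) gives $g(\mu G) \le \gp(\mu G) \le k$. Contraction of an edge $uv$ incident with a terminal, say $u = x$: in $\G_{xy}$ this produces a graph whose new terminal is the contracted vertex, still a valid element of $\M(G)$, so again $\gp$ drops; and $g(\widehat{\mu G}) = g(\mu G) \le \gp(\mu G) \le k$. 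So in every case the resulting graph embeds in $\SS_k$, giving $\hat{G} \in \Forb(\SS_k)$.

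\emph{Case $\theta(G) = 1$.} Now $\gp(G) = g(G)+1$, so $g(G^+) = k+1$ while $g(G) = k$; thus $\hat{G}$ is embeddable in $\SS_k$ but $\hat{G^+}$ is not. The key point is that $G^+$ is "almost" an obstruction: deleting any edge of $\hat{G^+}$ \emph{other than} $xy$ corresponds to a minor-operation in $\M(G)$, which lowers $\gp(G) = g(G^+)$ and hence embeds $\widehat{\mu(G^+)}$ in $\SS_k$. So $\hat{G^+}$ is edge-minimal-non-embeddable except possibly for the edge $xy$ and except for contractions. If contracting every edge of $\hat{G^+}$ also drops the genus below $k+1$, then $\hat{G^+} \in \Forb(\SS_k)$: contractions of edges of $\hat{G}$ are minor-operations in $\M(G)$, and $G^+/e = (G/e)^+$ so $g(\widehat{G^+/e}) = \gp(G/e) \le k$; the only issue is that in $\Forb(\SS_k)$ one also needs contractions to work, but contracting an edge $e$ of $G^+$ with $e \ne xy$ is covered, and $xy$ cannot be contracted. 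So the obstacle to $\hat{G^+} \in \Forb(\SS_k)$ is precisely whether $\hat{G^+}$ has minimum degree issues or whether some non-minor-tightness appears only via $xy$; the honest dichotomy is: either $\hat{G^+}$ is itself a minor-minimal obstruction, or it fails to be one only because it is not minor-minimal "through $xy$" — but it is still topologically minimal, i.e. $\hat{G^+} \in \Forb^*(\SS_k)$, and in that second situation one shows $g(G/xy) = g(\widehat{G/xy}) < \gp(G) = k+1$, i.e. $\hat{G/xy} \in \Forb(\SS_k)$ must itself be verified to be an obstruction (again using that every minor-operation on $G$ drops $\gp$, combined with $G/xy$ being obtained by the identification operation).

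\emph{Where the work is.} The routine part is translating minor-operations of $\hat{G}$ and $\hat{G^+}$ into minor-operations of $G$ in $\G_{xy}$ and using $\gp \ge g$ and $(G/e)^+ = G^+/e$. The genuine obstacle is the $\theta(G)=1$ case: one must argue carefully that the \emph{only} way $\hat{G^+}$ can fail to be in $\Forb(\SS_k)$ is through non-minimality of a degree-2 vertex (forcing the topological-minor statement $\hat{G^+} \in \Forb^*(\SS_k)$), and that in exactly that situation the "suppressed" obstruction is $\hat{G/xy}$ — establishing $\hat{G/xy} \in \Forb(\SS_k)$ requires checking minor-tightness of $\widehat{G/xy}$, which follows because a minor-operation on $G/xy$ lifts (possibly non-uniquely) to $G$ and drops $\gp$, while $g(\widehat{G/xy}) \le \gp(G) - 1 = k$ would contradict... so in fact one needs $g(G/xy) = k+1$ here, which is forced precisely when $\hat{G^+} \in \Forb^*(\SS_k) \setminus \Forb(\SS_k)$. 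Getting this trichotomy clean, and handling low-degree terminals correctly, is the crux; everything else is bookkeeping with Theorem~\ref{th-battle-ori} and the definitions.
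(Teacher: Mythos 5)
Your plan for both cases is the right one and aligns with the paper's argument, but the proof is incomplete precisely where you flag it as the crux, so there is a genuine gap. In the subcase $\theta(G)=1$ and $g(G/xy) = \gp(G) = k+1$, you assert that minor-tightness of $\widehat{G/xy}$ ``follows because a minor-operation on $G/xy$ lifts (possibly non-uniquely) to $G$ and drops $\gp$,'' but this sentence trails off and never closes the loop: knowing that the lifted operation $\mu'$ satisfies $\gp(\mu' G) < \gp(G)$ says nothing by itself about $g(\mu(G/xy))$. The missing observation, which is the entire content of the hard step, is that $\mu(G/xy)$ is a minor of $\widehat{\mu' G^+}$ --- apply $\mu'$ first, then contract $xy$, and observe these commute up to taking a further minor. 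From that, $g(\mu(G/xy)) \le g(\widehat{\mu' G^+}) = \gp(\mu' G) < \gp(G) = g(G/xy) = k+1$, and minor-tightness of $G/xy$ follows. Without that containment, the chain of inequalities you gesture at does not exist.

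Two smaller issues. First, your framing of $\Forb^*$ as being about ``non-minimality of a degree-2 vertex'' and ``low-degree terminals'' misreads what distinguishes $\Forb^*(\SS_k)$ from $\Forb(\SS_k)$ here: the only minor-operation in $\hat{G^+}$ that can possibly fail to drop genus is the contraction of $xy$, which is a perfectly legal operation on $\hat{G^+}$ (the statement ``$xy$ cannot be contracted'' applies to $G^+ \in \G_{xy}$, not to the unlabelled $\hat{G^+}$). So $\hat{G^+}\in\Forb^*(\SS_k)$ because all edge-deletions drop genus, and $\hat{G^+}\notin\Forb(\SS_k)$ exactly when contracting $xy$ fails, i.e.\ when $g(G/xy)=k+1$; no degree considerations enter. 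Second, for the $\theta(G)=0$ case, you rederive by hand what the paper obtains in one line from the $1$-separation of $g$ and $\gp$, namely that $\theta(G)=0$ and $\M(G)=\dc1\gp$ force $\M(G)=\dc1 g$ by (S2), hence $G\in\Cc_k(g)$; your version works but is doing the same thing the long way.
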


\begin{proof}
  If $\theta(G) = 0$, then $\M(G) = \dc1 g$ by (S2) and thus $G \in \Cc_k(\g)$.
  Therefore $\hat{G} \in \Forb(\SS_k)$ as explained above.
  Suppose now that $\theta(G) = 1$.
  Since $G \in \Cc(\gp)$, $\M(G) \ss \dc1 {g,G^+}$.
  As $g(G^+ - xy) < g(G^+)$ (and all other minor-operations in $\hat{G^+}$ except contracting the edge $xy$ decrease the genus of $G^+$), 
  we have that $\hat{G^+} \in \Forb^*(\SS_k)$.
  If $g(G / xy) < \gp(G)$, then $\hat{G^+} \in \Forb(\SS_k)$ since both deletion and contraction of $xy$ decreases the genus of $G^+$.
  On the other hand, if $g(G /xy) = \gp(G)$, take any minor-operation $\mu \in \M(G /xy)$.
  Since $\mu$ is also a minor-operation in $G$, we obtain that
  $g(\mu(G /xy)) \le \gp(\mu G) < \gp(G) = g(G / xy)$ as $\mu(G /xy)$ is a minor of $\hat{\mu G^+}$.
  Since $\mu$ was chosen arbitrarily, $G/xy \in \Forb(\SS_k)$.
\end{proof}

Since the parameters $g$ and $\gp$ are 1-separated, the graphs whose minor-operations decrease either $g$ or $\gp$
belong to either $\Cc(g)$ or $\Cc(\gp)$.

\begin{lemma}
\label{lm-theta}
  Let $G \in \Gcxy$.
  If $\M(G) = \dc1 g \cup \dc1 \gp$, then $G$ belongs to either $\Cc(g)$ or $\Cc(\gp)$.
\end{lemma}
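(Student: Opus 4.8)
The plan is to show that the hypothesis $\M(G) = \dc1 g \cup \dc1 \gp$ forces $\theta$ to be constant on $G$ in the sense that either every minor-operation decreases $g$, or every minor-operation decreases $\gp$. The starting point is the remark preceding the lemma: since $g$ and $\gp$ are $1$-separated by $\theta$ (with $\theta(G)\in\{0,1\}$), properties (S1) and (S2) relate the two decrease-sets. Concretely, if $\theta(G)=0$ then $\dc k \gp\ss\dc k g$ by (S2), so $\M(G)=\dc1 g$ and $G\in\Cc(g)$; if $\theta(G)=1$ then $\dc k g\ss\dc k \gp$ by (S1), so $\M(G)=\dc1 \gp$ and $G\in\Cc(\gp)$. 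Thus the whole content of the lemma is already delivered by a single case split on the value of $\theta(G)$, together with the observation that $\M(G)=\dc1 g\cup\dc1 \gp$ is assumed.

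So the proof I would write is essentially two lines: First, if $\theta(G)=0$, then by (S2) applied with $\P=g$, $\Q=\gp$, $\L=\theta$, we have $\dc1 \gp\ss\dc1 g$, hence $\dc1 g\cup\dc1 \gp=\dc1 g$, and the hypothesis gives $\M(G)=\dc1 g$, i.e.\ $G\in\Cc(g)$. Second, if $\theta(G)=1$, then by (S1) we have $\dc1 g\ss\dc1 \gp$, hence $\dc1 g\cup\dc1 \gp=\dc1 \gp$, and the hypothesis gives $\M(G)=\dc1 \gp$, i.e.\ $G\in\Cc(\gp)$. Since $\theta(G)\in\{0,1\}$ always, these two cases are exhaustive, and the lemma follows.

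There is essentially no obstacle here — the lemma is a bookkeeping consequence of the $1$-separation machinery set up just before it (the (S1)–(S3) list and the table of $1$-separated parameter pairs). The only thing to be careful about is invoking (S1)/(S2) with the correct orientation: $g$ and $\gp$ are $1$-separated \emph{in that order} by $\theta=\gp-g$, so "$\L(G)=0$" means $\theta(G)=0$, which is the case that collapses to $\Cc(g)$, and "$\L(G)=1$" means $\theta(G)=1$, collapsing to $\Cc(\gp)$. I would double-check that $\dc1 g$ and $\dc1 \gp$ as used in the statement refer to decreases \emph{in $G$ itself} (not in some ambient $2$-sum), which is consistent with the definition of $\C(\P)$ given at the start of Section~\ref{sc-classes}. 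With that settled, no further work — in particular no embedding arguments — is needed.
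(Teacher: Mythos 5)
Your proof is correct and is essentially identical to the paper's: both argue by a case split on $\theta(G)\in\{0,1\}$, invoking (S2) when $\theta(G)=0$ to collapse $\dc1 g\cup\dc1\gp$ to $\dc1 g$, and (S1) when $\theta(G)=1$ to collapse it to $\dc1\gp$. No differences in substance.
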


\begin{proof}
  If $\theta(G) = 0$, then $\dc1 \gp \ss \dc1 g$ by (S2). Thus $\M(G) = \dc1 g$ and $G \in \Cc(g)$.
  Similarly, if $\theta(G) = 1$, then $\dc1 g \ss \dc1 \gp$ by (S1).
  We conclude that $\M(G) = \dc1 \gp$ and $G \in \Cc(\gp)$.
\end{proof}

The classes $\Cc(\galt)$ and $\Cc(\gap)$ are related to the class $\C(\galt)$ which was introduced by Mohar and \v{S}koda~\cite{mohar-obstr},
who proved that the classes $\C_k(\galt)$ are finite (for each $k \ge 1$).
By the following lemma, this implies that both $\Cc_k(\galt)$ and $\Cc_k(\gap)$ are finite.
Observe that a graph $G \in \Gcxy$ belongs to $\C(\galt)$ if and only if it belongs to $\Cc(\galt)$.
The graphs in $\C(\galt) \sm \Cc(\galt)$ can be characterized as follows.

\begin{lemma}
\label{lm-gap-minus-galt}
For a graph $G \in \Gcxy$ and $k\ge 0$, we have that $G^+ \in \C_k(\galt)$ if and only if $G \in \Cc_k(\gap) \sm \Cc_k(\galt)$.
\end{lemma}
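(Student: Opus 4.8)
The statement to prove is Lemma~\ref{lm-gap-minus-galt}: for $G \in \Gcxy$ and $k \ge 0$, we have $G^+ \in \C_k(\galt)$ iff $G \in \Cc_k(\gap) \sm \Cc_k(\galt)$. The plan is to unwind all three memberships into statements about genus parameters and to pivot everything through the $1$-separation of $\galt$ and $\gap$ (by $\e + \theta - \ep$) together with Lemma~\ref{lm-galt-constraint} and Lemma~\ref{lm-alt-equiv}. The key observation is that $\M(G^+) = \M(G)$ as abstract sets of minor-operations (since $xy \in E(G^+)$ but $(xy,/) \notin \M(G^+)$, and the edge-deletions/contractions available are exactly those available on $G$ except that on $G^+$ we additionally have $(xy,-)$; actually $(xy,-)$ turns $G^+$ back into $G$). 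I will want to be careful here: $\M(G^+)$ has exactly one more element than $\M(G)$, namely $(xy,-)$, and $\galt(G^+ - xy) = \galt(G)$, so whether $(xy,-) \in \dc1{\galt, G^+}$ is controlled by whether $\galt(G) < \galt(G^+) = \gap(G)$, i.e. by whether $\gap(G) = \galt(G) + 1$.

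The forward direction: assume $G^+ \in \C_k(\galt)$, so $\galt(G^+) = k+1$ and every $\mu \in \M(G^+)$ satisfies $\galt(\mu G^+) < \galt(G^+)$. Since $\galt(G^+) = \gap(G)$, we get $\gap(G) = k+1$. Applying this to $\mu = (xy,-)$ gives $\galt(G) < \gap(G)$, hence by $1$-separation $\gap(G) = \galt(G)+1$, so $\galt(G) = k$, which already shows $G \notin \Cc_k(\galt)$ (wrong value of $\galt$). It remains to show $G \in \Cc_k(\gap)$, i.e. every $\mu \in \M(G)$ decreases $\gap$. Fix $\mu \in \M(G) \subseteq \M(G^+)$. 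Then $\galt(\mu G^+) \le k$; I claim $\gap(\mu G) = \galt(\mu G^+) \le k < k+1 = \gap(G)$. This equality is exactly the definition $\gap(H) = \galt(H^+)$ applied to $H = \mu G$, using that $(\mu G)^+ = \mu(G^+)$ when $\mu$ does not touch $xy$ — I should check that contracting or deleting an edge of $G$ commutes with adding $xy$, which is immediate for deletion and for contraction requires only that the endpoints of $\mu$'s edge are not both identified with $\{x,y\}$ in a way that creates/destroys the $xy$ edge; since $\mu$ keeps $x,y$ as distinct terminals, $\mu(G^+) = (\mu G)^+$. So $\gap(\mu G) < \gap(G)$ for all $\mu \in \M(G)$, giving $G \in \Cc_k(\gap)$.

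The reverse direction: assume $G \in \Cc_k(\gap) \sm \Cc_k(\galt)$. Then $\gap(G) = k+1$, so $\galt(G^+) = k+1$. Every $\mu \in \M(G)$ decreases $\gap$, hence $\galt(\mu G^+) = \gap(\mu G) < \gap(G) = \galt(G^+)$ (same commutation as above). It remains only to handle the one extra operation $(xy,-) \in \M(G^+)$: I must show $\galt(G) < \galt(G^+) = k+1$. By Lemma~\ref{lm-galt-gp-sep}-type reasoning, $\galt(G) \le \gap(G) = k+1$ always (since $\galt \le \gap$ by $1$-separation); the only danger is $\galt(G) = k+1$, i.e. $G \in \C_{k}(\galt)$... wait, that would make $G \in \Cc_k(\galt)$ provided every $\mu \in \M(G)$ also decreases $\galt$ — which is where the hypothesis $G \notin \Cc_k(\galt)$ must be used, but it must be used to \emph{derive} $\galt(G) < k+1$ rather than just the failure of criticality. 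Here is the subtle point and \textbf{the main obstacle}: I need that $\gap(G) = k+1$ together with $G \notin \Cc_k(\galt)$ forces $\galt(G) = k$ (not $k+1$). Suppose for contradiction $\galt(G) = k+1 = \gap(G)$. By Corollary~\ref{cr-galt}, $\dc1\galt \subseteq \dc1 g \cup \dc1\gap$; and since $G \in \Cc_k(\gap)$, every $\mu \in \M(G) = \dc1{\gap}$. I want to conclude $\M(G) = \dc1{\galt}$. Take $\mu \in \M(G)$; then $\gap(\mu G) \le k$. If also $\galt(G) = \gap(G)$, then by Lemma~\ref{lm-galt-constraint} applied to $G$ we are in the case $\galt(G) = \gap(G)$; I'd like $\galt(\mu G) \le \gap(\mu G) \le k < \galt(G)$, using $\galt \le \gap$ always — \emph{yes}, $\galt(H) \le \gap(H)$ for every $H$ by $1$-separation of $\galt$ and $\gap$. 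So $\galt(\mu G) \le k < k+1 = \galt(G)$, i.e. $\mu \in \dc1{\galt}$, for every $\mu \in \M(G)$; hence $\M(G) = \dc1\galt$ and $G \in \Cc_k(\galt)$, contradiction. Therefore $\galt(G) \ne k+1$, and combined with $\galt(G) \le \gap(G) = k+1$ and the lower bound from $1$-separation... actually I just need $\galt(G) \le k$, which follows. Hence $(xy,-) \in \dc1{\galt, G^+}$, and altogether $\M(G^+) = \dc1{\galt, G^+}$ with $\galt(G^+) = k+1$, i.e. $G^+ \in \C_k(\galt)$.

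The bookkeeping I expect to be fussiest is the clean statement "$\gap(\mu G) = \galt(\mu G^+)$" — i.e. that $\mu$ and $(\cdot)^+$ commute and that $\gap$ is literally defined as $\galt$ of the plus-graph; I'll state this as a one-line sub-observation up front and reuse it throughout. Everything else is a mechanical application of $1$-separation (S1)–(S3), Lemma~\ref{lm-galt-constraint}, and Corollary~\ref{cr-galt}.
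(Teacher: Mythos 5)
Your proof is correct and follows the same logical skeleton as the paper's: both directions pivot on the identity $\gap(G) = \galt(G^+)$, the observation that $\M(G^+) = \M(G) \cup \{(xy,-)\}$, and in the reverse direction a contradiction argument showing that $\galt(G) = \gap(G)$ together with $G \in \Cc_k(\gap)$ would force $G \in \Cc_k(\galt)$. The only difference is verbosity: you spell out the (S2) implication and the commutation of minor-operations with $(\cdot)^+$ by hand where the paper invokes them tersely (``It is immediate that $G \in \Cc_k(\gap)$'' and a direct citation of (S2)).
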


\begin{proof}
  Suppose that $G^+ \in \C_k(\galt)$. It is immediate that $G \in \Cc_k(\gap)$. 
  Since $\galt(G) = \galt(G^+ - xy) < \galt(G^+) = k + 1$, the graph $G$ does not belong to $\Cc_k(\galt)$.

  Suppose now that $G \in \Cc_k(\gap) \sm \Cc_k(\galt)$.
  If $\galt(G) = \gap(G)$, then $\M(G) = \dc1 \galt$ by (S2) and it follows that $G \in \Cc_k(\galt)$.
  Thus $\galt(G) < \gap(G)$. Hence $\galt(G^+) > \galt(G) = \galt(G^+ - xy)$ and $(xy, -) \in \dc1 {\galt, G^+}$.
  We conclude that $G^+ \in \C_k(\galt)$ as $\galt(G^+) = \gap(G) = k + 1$.
\end{proof}

Also the graphs that do not belong to $\Cc(\gap)$ can be characterized.

\begin{lemma}
  \label{lm-galt-minus-gap}
  If $G \in \Cc(\galt)$, then $G \not\in \Cc(\gap)$ if and only if there exists $\mu \in \M(G)$ such that $\mu \in \dc1 g \sm \dc1 \gap$.
\end{lemma}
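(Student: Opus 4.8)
The plan is to derive both implications directly from Corollary~\ref{cr-galt}, which asserts that $\dc1 \galt \ss \dc1 g \cup \dc1 \gap$ for every graph in $\G_{xy}$. Since $G \in \Cc(\galt)$ means precisely that $\M(G) = \dc1 \galt$, applying the corollary to $G$ yields the key containment $\M(G) \ss \dc1 g \cup \dc1 \gap$: every minor-operation available in $G$ decreases $g$ or $\gap$ (possibly both). Everything else is bookkeeping with the definition of a critical class.

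For the ``if'' direction, I would assume some $\mu \in \M(G)$ satisfies $\mu \in \dc1 g \sm \dc1 \gap$. Then $\mu \notin \dc1 \gap$, so $\M(G) \neq \dc1 \gap$, and hence $G \notin \Cc(\gap)$ by definition. This direction uses nothing beyond unwinding definitions.

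For the ``only if'' direction, I would assume $G \notin \Cc(\gap)$. Since the inclusion $\dc1 \gap \ss \M(G)$ holds trivially for any graph, the failure of $\M(G) = \dc1 \gap$ must come from a minor-operation $\mu \in \M(G) \sm \dc1 \gap$. Feeding this $\mu$ into the containment $\M(G) \ss \dc1 g \cup \dc1 \gap$ established above and using $\mu \notin \dc1 \gap$, I conclude $\mu \in \dc1 g$, so $\mu \in \dc1 g \sm \dc1 \gap$, as required.

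I do not expect a genuine obstacle here: all the substance is already packaged in Corollary~\ref{cr-galt} (which in turn rests on Lemma~\ref{lm-galt-constraint}, the dichotomy $\galt(G) = g(G)$ or $\galt(G) = \gap(G)$). The only points that need care are invoking the corollary for the graph $G$ itself rather than for some minor $\mu G$, and noting explicitly that $\dc1 \gap \ss \M(G)$ is automatic so that ``$G \notin \Cc(\gap)$'' genuinely produces a witness $\mu$ of the desired form.
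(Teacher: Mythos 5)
Your proof is correct and follows the same route as the paper's: the ``if'' direction is just unwinding the definition of $\Cc(\gap)$, and the ``only if'' direction applies Corollary~\ref{cr-galt} to $G$ (using $\M(G) = \dc1 \galt$) to route a witness $\mu \notin \dc1 \gap$ into $\dc1 g$.
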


\begin{proof}
  The ``if'' part follows from the fact that $\M(G) \not= \dc1 \gap$.
  The ``only if'' part follows from Corollary~\ref{cr-galt} as there is $\mu \in \M(G)$ such that $\mu \not\in \dc1 \gap$.
\end{proof}

Corollary~\ref{cr-galt} says that each minor-operation that decreases alternating genus also decreases $g$ or $\gap$.
We have the following weakly converse statement.

\begin{lemma}
\label{lm-g-or-gap}
  Let $G \in \Gcxy$.
  If $\M(G) = \dc1 g \cup \dc1 \gap$, then $G$ belongs to at least one of $\Cc(g)$, $\Cc(\galt)$, or $\Cc(\gap)$.
\end{lemma}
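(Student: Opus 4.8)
The statement to prove is Lemma~\ref{lm-g-or-gap}: if $G \in \Gcxy$ satisfies $\M(G) = \dc1 g \cup \dc1 \gap$, then $G \in \Cc(g) \cup \Cc(\galt) \cup \Cc(\gap)$. The plan is to do a case analysis on the values of the small parameters $\theta(G)$ and $\ep(G)$ (equivalently $\e(G)$), using the 1-separation facts (S1)--(S3) from the excerpt, the relation $\galt = g - \e$, $\gap = \gp - \ep$, and Lemma~\ref{lm-galt-constraint} (which says every graph satisfies $\galt(G) = g(G)$ or $\galt(G) = \gap(G)$). The intuition is that when $G$ is not $xy$-alternating, $\galt = g$, so $\dc1 \galt = \dc1 g$, and the hypothesis should collapse toward $\Cc(g)$ or a statement about $\gap$; when $G$ is $xy$-alternating, Lemma~\ref{lm-alt-equiv} forces $\theta(G) = 0$ and $\ep(G) = 1$, which tightens the relations among $g$, $\gp$, $\galt$, $\gap$ considerably.

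First I would handle the case $\e(G) = 0$. Then $\galt(G) = g(G)$, so by (S2) applied to the pair $\galt, g$ (1-separated by $\e$, with $\e(G)=0$) we get $\dc1 g \ss \dc1 \galt$, hence $\dc1 g \cup \dc1 \gap \supseteq \dc1 g$ and also $\dc1 \galt \supseteq \dc1 g$. I'd then argue that under $\e(G)=0$ the hypothesis $\M(G) = \dc1 g \cup \dc1 \gap$ forces $G$ into $\Cc(g)$ or $\Cc(\gap)$: split on whether every operation in $\M(G)$ actually lies in $\dc1 g$ (giving $\Cc(g)$) or some operation lies in $\dc1 \gap \sm \dc1 g$, and use the 1-separation of $\gap$ and $\gp$ (by $\ep$), together with Lemma~\ref{lm-theta}-style reasoning, to show all operations then lie in $\dc1 \gap$. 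The cleanest route here is probably: if $\gap(G) = g(G)$ then $\dc1 g$ and $\dc1 \gap$ coincide on relevant operations and $G \in \Cc(g)$; otherwise $\gap(G) > g(G)$, and I claim every $\mu$ with $\mu \in \dc1 g$ is also in $\dc1 \gap$ via the constraint $\gap \le g + (\text{something} \le 1)$, landing $G \in \Cc(\gap)$.

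Next, the case $\e(G) = 1$. By Lemma~\ref{lm-alt-equiv}, $\theta(G) = 0$ and $\ep(G) = 1$; by Lemma~\ref{lm-galt-constraint} and the proof of Lemma~\ref{lm-galt-constraint}, $\gap(G) = \galt(G)$, and moreover $g(G) = \gp(G)$ (since $\theta(G)=0$) and $\galt(G) = g(G)$ or $\gap(G)$. So the four parameters are squeezed into at most two values differing by one. I would then show that under these collapses, $\dc1 g \cup \dc1 \gap$ is essentially $\dc1 \galt$ (using $\galt$ 1-separated below both $g$ and $\gap$, via $\e$ and $\e+\theta-\ep$ respectively, with the relevant small parameters now pinned down), so $\M(G) = \dc1 \galt$ directly gives $G \in \Cc(\galt)$. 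The bookkeeping is: for $\e(G)=1$, $\theta(G)=0$, $\ep(G)=1$, the 1-separating parameter for $\galt,\gap$ is $\e+\theta-\ep = 1+0-1 = 0$, so by (S2) $\dc1 \gap \ss \dc1 \galt$; and the separating parameter for $\galt, g$ is $\e = 1$, so by (S1) $\dc1 g \ss \dc1 \galt$. Hence $\dc1 g \cup \dc1 \gap \ss \dc1 \galt \ss \M(G)$, and combined with $\dc1\galt \ss \dc1 g \cup \dc1\gap$ from Corollary~\ref{cr-galt} we get $\M(G) = \dc1\galt$, i.e.\ $G \in \Cc(\galt)$.

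The main obstacle I anticipate is the $\e(G)=0$ case when $\gap(G) > g(G)$: here I must rule out a ``mixed'' situation where some operations strictly decrease $g$ but not $\gap$ while others decrease $\gap$ but not $g$, so that $G$ falls into neither $\Cc(g)$ nor $\Cc(\gap)$. Resolving this should follow from Corollary~\ref{cr-galt} applied to $G$ itself (every operation decreasing $\galt$ decreases $g$ or $\gap$) plus the observation that, with $\e(G)=0$, $\galt(G)=g(G)$ forces $\dc1 g \ss \dc1\galt$, so every operation in $\dc1 g$ also decreases $\galt$; then Lemma~\ref{lm-galt-constraint} applied to $\mu G$ (exactly as in the proof of Corollary~\ref{cr-galt}) pins down that it must also decrease $g$ or $\gap$ consistently. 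I would lean on the already-proved Lemmas~\ref{lm-theta} and~\ref{lm-galt-minus-gap} and Corollary~\ref{cr-galt} to avoid re-deriving these facts, keeping the argument to a short case split rather than a long computation.
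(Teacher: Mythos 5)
Your plan is fundamentally the paper's: split into the three cases afforded by Lemma~\ref{lm-galt-constraint} (namely $\galt(G)=g(G)=\gap(G)$, $g(G)>\galt(G)=\gap(G)$, $\gap(G)>\galt(G)=g(G)$) and use (S1)/(S2) to push $\dc1 g$ and $\dc1\gap$ into a single $\dc1\P$. Your splitting on $\e(G)$ and then on $\gap(G)$ versus $g(G)$ realizes exactly those three cases, and your handling of the case $\e(G)=0$, $\gap(G)>g(G)$ is correct. However, there are two genuine errors in the other two cases.

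First, in the case $\e(G)=0$ and $\gap(G)=g(G)$ (so all three of $\galt,g,\gap$ coincide), you assert that ``$\dc1 g$ and $\dc1\gap$ coincide on relevant operations and $G\in\Cc(g)$.'' Equality of parameter values does not make the two decrease sets coincide, and in fact $\dc1\gap\ss\dc1 g$ need not hold: since $g-\gap = \ep-\theta$, a minor-operation can drop $\gap$ while the compensating changes in $\ep$ and $\theta$ keep $g$ fixed. The correct move (and the paper's) is that (S2) gives both $\dc1 g\ss\dc1\galt$ (via $\e(G)=0$) and $\dc1\gap\ss\dc1\galt$ (via $\e+\theta-\ep=0$), so $\M(G)\ss\dc1\galt$ and $G\in\Cc(\galt)$, not $\Cc(g)$.

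Second, in the case $\e(G)=1$, you apply (S1) to the pair $(\galt,g)$ and write ``$\dc1 g\ss\dc1\galt$.'' But (S1) is the opposite inclusion: with $\P=\galt$, $\Q=g$, $\L=\e$ and $\e(G)=1$, (S1) gives $\dc1\galt\ss\dc1 g$. Your claimed inclusion is false in general (from $g(\mu G)\le g(G)-1$ one only gets $\galt(\mu G)\le\galt(G)$, not a strict drop). Consequently your conclusion $G\in\Cc(\galt)$ is unjustified here; what actually follows, combining $\dc1\gap\ss\dc1\galt$ from (S2) with $\dc1\galt\ss\dc1 g$ from (S1), is $\dc1\gap\ss\dc1 g$ and hence $G\in\Cc(g)$, which is the paper's conclusion. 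Both fixes are local and leave your overall strategy, which is the same as the paper's, intact.
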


\begin{proof}
  By Lemma~\ref{lm-galt-constraint}, either $\galt(G) = g(G)$ or $\galt(G) = \gap(G)$.
  If $\g(G) = \gap(G) = \galt(G)$, then $\dc1 g \ss \dc1 \galt$ and $\dc1 \gap \ss \dc1 \galt$ by (S2).
  Thus $G \in \Cc(\galt)$.

  If $\g(G) > \galt(G)$, then $\dc1 \gap \ss \dc1 \galt$ by (S2). 
  By (S1), $\dc1 \galt \ss \dc1 \g$. We conclude that $G \in \Cc(g)$.
  Similarly, if $\gap(G) > \galt(G)$, then $\dc1 \g \ss \dc1 \galt$ by (S2).
  By (S1), $\dc1 \galt \ss \dc1 \gap$. We conclude that $G \in \Cc(\gap)$.
\end{proof}



\section{Hoppers}
\label{sc-hoppers}

In this section, we describe three subfamilies of $\Cc(\gp)$ all of which we call \df{hoppers}. 
If $G$ is a graph in $\Cc(\gap)$ such that $\ep(G) = 1$, then 
we call $G$ a \df{hopper of level 2\/}. 
It is immediate from (S1) that $G \in \Cc(\gp)$.
The level should indicate the difficulty to construct such a graph.
Hoppers of level 0 and 1 appear as parts of obstructions of
connectivity 2 and are defined below.

A graph $G \in \Gcxy$ is a \df{hopper of level 1\/} if $\M(G) = \dc1 \gap \cup \dc2 g$ and $G \not\in \Cc(\gap)$.
Similarly, a graph $G \in \Gcxy$ is a \df{hopper of level 0\/} if $\M(G) = \dc1 \g \cup \dc2 \gap$ and $G \not\in \Cc(\g)$.
Let $\H^l$, $0 \le l \le 2$, denote the family of hoppers of level $l$.
Let $\H_k^l$ denote the subfamily of $\H^l$ of graphs $G$ with $\gp(G) = k$.

\begin{lemma}
\label{lm-hopper-0}
  If $G \in \H^0$, then $G \in \Cc(\gp)$, $\ep(G) = 0$, and $\theta(G) = 1$.
\end{lemma}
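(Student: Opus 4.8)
The plan is to reduce both parameter identities to the single inequality $\gap(G)>\g(G)$, prove that, and then deduce membership in $\Cc(\gp)$ from the separation properties. Recall that by definition $\gap=\gp-\ep=\g+\theta-\ep$, so, since $\theta(G),\ep(G)\in\{0,1\}$, we have $\gap(G)>\g(G)$ if and only if $\theta(G)=1$ and $\ep(G)=0$. Hence proving $\gap(G)>\g(G)$ yields two of the three conclusions at once. The whole argument hinges on the following observation:

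\smallskip
\noindent\textbf{Claim.} If $\gap(G)\le\g(G)$, then $\dc2\gap\ss\dc1\g$.

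\smallskip
\noindent To prove the claim I would take any $\mu\in\dc2\gap$ and chase the generic bounds: using $\gp\ge\g$ (adding $xy$ cannot decrease the genus), the defining identity $\gap=\gp-\ep$, the fact $\ep(\mu G)\le 1$, and the hypothesis,
\[
\g(\mu G)\le\gp(\mu G)=\gap(\mu G)+\ep(\mu G)\le\gap(\mu G)+1\le\gap(G)-1\le\g(G)-1,
\]
so $\mu\in\dc1\g$. Now suppose, for contradiction, that $\gap(G)\le\g(G)$. Since $G\in\H^0$ means $\M(G)=\dc1\g\cup\dc2\gap$, the Claim gives $\M(G)=\dc1\g$, so $G\in\C(\g)$; as $G\in\Gcxy$ this means $G\in\Cc(\g)$, contradicting the requirement $G\notin\Cc(\g)$ in the definition of $\H^0$. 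Therefore $\gap(G)>\g(G)$, which forces $\theta(G)=1$ and $\ep(G)=0$.

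It remains to show $G\in\Cc(\gp)$, i.e.\ $\M(G)=\dc1\gp$. Since $\dc1\gp\ss\M(G)$ always holds, it suffices to prove $\M(G)\ss\dc1\gp$. Because $\theta(G)=1$ and $\g,\gp$ are $1$-separated by $\theta$, property (S1) gives $\dc1\g\ss\dc1\gp$; because $\gap,\gp$ are $1$-separated by $\ep$, property (S3) gives $\dc2\gap\ss\dc1\gp$. Hence $\M(G)=\dc1\g\cup\dc2\gap\ss\dc1\gp$, so $\M(G)=\dc1\gp$, and since $G\in\Gcxy$ we conclude $G\in\Cc(\gp)$.

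I do not expect a serious obstacle here: the content is entirely in the inequality chain of the Claim. The only points to be careful about are (a) that this chain uses only parameter-independent facts ($\g\le\gp$, $\ep\le1$, and the definitional relation $\gap=\gp-\ep$), so it does not covertly assume anything about $\theta(G)$ or $\ep(G)$; (b) that $\dc k\cdot$ is by construction a subset of $\M(G)$, so establishing $\M(G)\ss\dc1\gp$ already gives the reverse inclusion; and (c) that $\H^0\ss\Gcxy$, so the conclusions are correctly phrased with $\Cc(\cdot)$ rather than $\C(\cdot)$.
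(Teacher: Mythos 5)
Your proof is correct. It follows essentially the same route as the paper, differing only in organization: the paper derives $\theta(G)=1$ and $\ep(G)=0$ by two separate contradiction arguments against $G\notin\Cc(\g)$, each run entirely inside the (S1)--(S3) formalism --- if $\theta(G)=0$ then (S2) gives $\dc1\gp\ss\dc1\g$, and if $\ep(G)=1$ then (S1) gives $\dc2\gap\ss\dc2\gp$ and (S3) gives $\dc2\gp\ss\dc1\g$; either way $\M(G)=\dc1\g\cup\dc2\gap$ collapses to $\dc1\g$. You instead reduce both to the single numerical condition $\gap(G)>\g(G)$, established by a direct inequality chase (your Claim) rather than by invoking the separation lemmas, and then read off $\theta(G)=1$, $\ep(G)=0$ at once from $\gap-\g=\theta-\ep$. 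The final step, $G\in\Cc(\gp)$ via (S1) applied to $(\g,\gp)$ and (S3) applied to $(\gap,\gp)$, coincides with the paper's. Your version is slightly more economical in that one inequality delivers both parameter identities; the paper's is stylistically uniform in never leaving (S1)--(S3). Either is acceptable.
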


\begin{proof}
  By (S3), $\dc2 \gap \ss \dc1 \gp$.
  If $\theta(G) = 0$, then $\dc1 \gp \ss \dc1 g$ by (S2) --- a contradiction with $G \not\in \Cc(g)$.
  Hence $\theta(G) = 1$.
  By (S1), $\dc1 g \ss \dc1 \gp$ and we conclude that $G \in \Cc(\gp)$.

  If $\ep(G) = 1$, then $\dc2 \gap \ss \dc2 \gp$ by (S1) and, since $\dc2 \gp \ss \dc1 \g$ by (S3), we have that $\dc2 \gap \ss \dc1 \g$, a contradiction.
  Thus $\ep(G) = 0$.
\end{proof}

Note that the proof of the next lemma is analogous to the proof of Lemma~\ref{lm-hopper-0}.

\begin{lemma}
\label{lm-hopper-1}
  If $G \in \H^1$, then $G \in \Cc(\gp)$, $\ep(G) = 1$, and $\theta(G) = 0$.
\end{lemma}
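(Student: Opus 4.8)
The plan is to imitate the proof of Lemma~\ref{lm-hopper-0} verbatim, with the roles of the pair $(g,\gp)$ replaced by the pair $(\gap,\gp)$ and the roles of the ``difference'' parameters $\theta$ and $\ep$ interchanged. The only facts used are the definition $G\in\H^1$ (so $\M(G)=\dc1\gap\cup\dc2 g$ and $G\notin\Cc(\gap)$) together with two of the 1-separations from Fig.~\ref{fg-poset}: $g$ and $\gp$ are 1-separated by $\theta$, and $\gap$ and $\gp$ are 1-separated by $\ep$; these feed properties (S1)--(S3).

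Concretely, I would proceed in four steps. First, record the unconditional inclusion $\dc2 g\ss\dc1\gp$, which is (S3) for the pair $(g,\gp)$ with $k=1$; this disposes of the ``$\dc2 g$'' summand of $\M(G)$ regardless of anything else. Second, pin down $\ep(G)$: assuming $\ep(G)=0$, (S2) for $(\gap,\gp)$ gives $\dc1\gp\ss\dc1\gap$, so by the first step $\dc2 g\ss\dc1\gap$ as well, whence $\M(G)=\dc1\gap\cup\dc2 g=\dc1\gap$, i.e.\ $G\in\Cc(\gap)$, contradicting $G\notin\Cc(\gap)$; therefore $\ep(G)=1$. Third, with $\ep(G)=1$ in hand, (S1) for $(\gap,\gp)$ gives $\dc1\gap\ss\dc1\gp$, so combined with the first step $\M(G)=\dc1\gap\cup\dc2 g\ss\dc1\gp$, hence $\M(G)=\dc1\gp$ and $G\in\Cc(\gp)$. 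Fourth, pin down $\theta(G)$: assuming $\theta(G)=1$, (S1) for $(g,\gp)$ gives $\dc2 g\ss\dc2\gp$, and (S3) for $(\gap,\gp)$ with $k=1$ gives $\dc2\gp\ss\dc1\gap$; chaining, $\dc2 g\ss\dc1\gap$, so again $\M(G)=\dc1\gap$ and $G\in\Cc(\gap)$, a contradiction, forcing $\theta(G)=0$.

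There is no real obstacle here; the statement is essentially a formal consequence of (S1)--(S3). The one place demanding care is the bookkeeping of which parameter plays the role of $\P$ and which of $\Q$ in each 1-separation (i.e.\ getting the direction of each inclusion right), since an inverted inclusion would break every step. This is exactly the sort of routine symmetric rephrasing of Lemma~\ref{lm-hopper-0} that the remark preceding the statement anticipates, so beyond that verification I expect the write-up to be a few lines long.
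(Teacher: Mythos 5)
Your proof is correct and follows the paper's own proof essentially verbatim: the same four moves, in the same order, using (S3) for $(g,\gp)$ to subsume $\dc2 g$, then (S2)/(S1) for $(\gap,\gp)$ to derive $\ep(G)=1$ and $G\in\Cc(\gp)$, and finally the (S1)$\to$(S3) chain for $\theta(G)=0$. The directions of all the inclusions are handled correctly.
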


\begin{proof}
  By (S3), $\dc2 \g \ss \dc1 \gp$.
  If $\ep(G) = 0$, then $\dc1 \gp \ss \dc1 \gap$ by (S2) --- a contradiction with $G \not\in \Cc(\gap)$.
  Hence $\ep(G) = 1$.
  By (S1), $\dc1 \gap \ss \dc1 \gp$ and we conclude that $G \in \Cc(\gp)$.

  If $\theta(G) = 1$, then $\dc2 \g \ss \dc2 \gp$ by (S1) and, since $\dc2 \gp \ss \dc1 \gap$ by (S3), we have that $\dc2 \g \ss \dc1 \gap$, a contradiction.
  Thus $\theta(G) = 0$.
\end{proof}

Similarly to the genus, alternating genus decreases by at most 1 when an edge is deleted.

\begin{lemma}
\label{lm-galt-deletion}
  Let $G \in \G_{xy}$. For each $e \in E(G)$,
  $\galt(G - e) \ge \galt(G) - 1$.
\end{lemma}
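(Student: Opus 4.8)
The plan is to control $\galt(G-e)$ via the genus, using that deleting an edge drops the genus by at most one and that $\e$ is $\{0,1\}$-valued. Since genus is minor-monotone, $g(G-e)\le g(G)$, and inserting $e$ into a minimum genus embedding of $G-e$ (where it either splits a face or merges two faces) gives $g(G)\le g(G-e)+1$; thus $g(G-e)\in\{g(G)-1,\,g(G)\}$. If $g(G-e)=g(G)$, then $\galt(G-e)=g(G)-\e(G-e)\ge g(G)-1\ge \galt(G)-1$; the same trivial bound handles $g(G-e)=g(G)-1$ together with $\e(G-e)=0$. Hence the only case needing an argument is $g(G-e)=g(G)-1$ and $\e(G-e)=1$: there $\galt(G-e)=g(G)-2$, and the desired inequality $\galt(G-e)\ge\galt(G)-1$ is equivalent to $\galt(G)\le g(G)-1$, i.e.\ to $\e(G)=1$. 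So it suffices to prove: \emph{if $G-e$ is $xy$-alternating and $g(G)=g(G-e)+1$, then $G$ is $xy$-alternating.}

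We may assume $G$ is connected: if $x,y$ lie in different components then $\e(G)=\e(G-e)=0$ and the claim is trivial, and otherwise one applies the connected case to the component containing $x$ and $y$ (a cut-edge does not change the genus by Theorem~\ref{th-battle-ori}, and deleting an edge outside that component changes $g$ by at most one). Now fix a minimum genus $xy$-alternating embedding $\Pi'$ of $G-e$ with an $xy$-alternating $\Pi'$-face $W'$, and write $e=uv$. If $u$ and $v$ lay on a common $\Pi'$-face, then inserting $e$ into that face would split it and yield an embedding of $G$ of genus $g(\Pi')=g(G-e)<g(G)$, a contradiction; hence no $\Pi'$-face contains both $u$ and $v$. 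Choose any $\Pi'$-face $F_1$ with $u$ on its boundary and any $\Pi'$-face $F_2$ with $v$ on its boundary; then $F_1\ne F_2$. Insert $e$ between a $\Pi'$-angle of $F_1$ at $u$ and a $\Pi'$-angle of $F_2$ at $v$, obtaining an embedding $\Pi$ of $G$; since this merges $F_1$ and $F_2$ into a single face, the Euler formula~(\ref{eq-euler}) gives $g(\Pi)=g(\Pi')+1=g(G)$, so $\Pi$ is a minimum genus embedding of $G$.

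Finally, $\Pi$ is $xy$-alternating. Every $\Pi'$-face other than $F_1$ and $F_2$ remains a $\Pi$-face, and the new merged $\Pi$-face contains the entire facial walks of $F_1$ and of $F_2$ as contiguous subsequences, joined by the two occurrences of $e$. In particular the cyclic vertex sequence of $W'$ occurs (cyclically) in some $\Pi$-face, so $(x,y,x,y)$ is still a cyclic subsequence of a $\Pi$-face; thus $\e(G)=1$, which is what we needed.

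The only delicate points are the bookkeeping in reducing to the connected case and the routine degenerate situations (a cut-edge, or an endpoint of $e$ of degree one), all of which land in the easy regime $g(G-e)=g(G)$; the actual content is the short face-surgery in the last two paragraphs, where I foresee no real obstacle.
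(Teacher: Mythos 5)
Your proof is correct and follows essentially the same route as the paper's: reduce to the case $g(G-e)=g(G)-1$ with $\e(G-e)=1$, take an $xy$-alternating minimum-genus embedding of $G-e$, and re-insert $e$ --- within a face if $u,v$ are cofacial (yielding the contradictory bound $g(G)\le g(G-e)$), or across two distinct faces otherwise, which raises the genus by exactly one while keeping the alternating face intact, so $\e(G)=1$. The paper phrases the second operation as ``embedding $e$ onto a new handle,'' which is the same rotation-system surgery you describe as merging $F_1$ and $F_2$; the only cosmetic difference is that you make the reduction to connected $G$ explicit, which the paper leaves implicit.
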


\begin{proof}
  Suppose that $\galt(G - e) < \galt(G) - 1$.
  Since $g(G - e) \ge g(G) - 1$, we have that $\epsilon(G) = 0$, $\epsilon(G - e) = 1$, and $g(G-e) = g(G) - 1$.
  Let $\Pi$ be an $xy$-alternating embedding of $G - e$ in $\SS_k$, $k = g(G-e)$ and 
  let $W$ be an $xy$-alternating $\Pi$-face.
  If the endvertices $u$ and $v$ of $e$ are $\Pi$-cofacial, then $\Pi$ can be extended to an embedding of $G$ in $\SS_k$, a contradiction.
  Otherwise, let $\Pi'$ be the embedding of $G$ on $\SS_{k+1}$ obtained from $\Pi$ by embedding $e$ onto a new handle connecting faces incident with $u$ and $v$.
  Since $W$ is a subwalk of a $\Pi'$-face, $\Pi'$ is $xy$-alternating. Since $g(\Pi') = g(G - e) + 1 = g(G)$, we have that $\epsilon(G) = 1$ which is a contradiction.
\end{proof}

Lemma~\ref{lm-galt-deletion} has the following corollary that shows the motivation for introducing the notion of hoppers of level 2.

\begin{corollary}
\label{cr-hoppers}
A graph $G \in \Cc_k(\gap)$ does not embed into $\SS_{k+1}$ if and only if $\ep(G) = 1$.
\end{corollary}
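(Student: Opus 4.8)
The statement is a biconditional and one direction is immediate. We have $\gp(G)=\gap(G)+\ep(G)=k+1+\ep(G)$ and $g(G)\le\gp(G)$, so if $\ep(G)=0$ then $g(G)\le k+1$ and $G$ embeds into $\SS_{k+1}$; contrapositively, if $G$ does not embed into $\SS_{k+1}$ then $\ep(G)=1$. Since also $g(G)\le\gp(G)\le k+2$ always holds, the other direction — assuming $\ep(G)=1$, prove that $G$ does not embed into $\SS_{k+1}$ — amounts to showing $g(G)=k+2$, equivalently $\theta(G)=0$, equivalently (by Lemma~\ref{lm-alt-equiv}, since $\ep(G)=1$) $\e(G)=1$.

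So assume $\ep(G)=1$. Because $\gap$ and $\gp$ are $1$-separated by $\ep$, (S1) gives $\dc1\gap\ss\dc1\gp$, whence $\M(G)=\dc1\gap=\dc1\gp$, i.e. $G\in\Cc_{k+1}(\gp)$. If $\theta(G)=0$, Lemma~\ref{lm-gp} gives $\hat{G}\in\Forb(\SS_{k+1})$, so $g(G)=k+2$ and we are done. Hence it suffices to derive a contradiction from $\theta(G)=1$. In that case Lemma~\ref{lm-alt-equiv} forces $\e(G)=0$, so $g(G)=\gp(G)-1=k+1$ and $\galt(G)=g(G)-\e(G)=k+1=\gap(G)$; since $\galt$ and $\gap$ agree on $G$, (S2) gives $\dc1\gap\ss\dc1\galt$ and therefore $G\in\Cc_k(\galt)$ as well. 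Also, as every minor-operation on $G$ decreases $\gp$ and $(xy,-)$ decreases the genus of $G^+$ (because $g(G)=k+1<k+2=g(G^+)$), every minor-operation on $G^+$ decreases its genus, so $\hat{G^+}\in\Forb(\SS_{k+1})$.

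It remains to contradict $\e(G)=0$. As $\ep(G)=\e(G^+)=1$, fix a minimum-genus embedding $\Pi$ of $G^+$ (of genus $k+2$) with an $xy$-alternating face $W$. The plan is to delete $xy$ from $\Pi$ and show that $G=G^+-xy$ then admits a minimum-genus ($=k+1$) embedding that is still $xy$-alternating — contradicting $\e(G)=0$. If $xy$ is singular in $\Pi$, then $\Pi-xy$ already has genus $k+1$; one first notes that $xy$ must lie on $W$ (otherwise $W$ survives intact, giving the contradiction immediately), and then that the resulting split of $W$ still contains a cyclic subsequence $(x,y,x,y)$. If $xy$ is non-singular in $\Pi$, then $\Pi-xy$ has genus $k+2$ but retains all of $W$ on a single face, hence is $xy$-alternating; one then descends from genus $k+2$ to $g(G)=k+1$ while preserving that face, using $\galt(G)=\gap(G)$, the membership $G\in\Cc_k(\galt)$, and Lemma~\ref{lm-galt-deletion} (which controls how $\galt$ changes under edge deletions) to ensure the removed handle is disjoint from $W$.

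The easy direction, the parameter bookkeeping, and the reductions via (S1), (S2), Lemma~\ref{lm-gp} and Lemma~\ref{lm-alt-equiv} are routine. The crux — and the reason this is not a one-line consequence of Lemma~\ref{lm-galt-deletion} — is the surgical step: controlling the location of $xy$ relative to the alternating face $W$ and showing that the pattern $(x,y,x,y)$ survives the deletion of $xy$, together with the accompanying genus reduction in the non-singular case. I expect the singular-but-on-$W$ configuration and this genus descent to be where the real work lies.
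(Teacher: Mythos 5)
Your setup and reductions are sound: the easy direction via $g(G)\le\gp(G)=k+1+\ep(G)$ is correct, and so is the reduction of the hard direction to ruling out the configuration $\ep(G)=1$, $\theta(G)=1$ (equivalently, by Lemma~\ref{lm-alt-equiv}, to showing $\e(G)=1$). The intermediate bookkeeping---$G\in\Cc_{k+1}(\gp)$, $G\in\Cc_k(\galt)$, $\hat{G^+}\in\Forb(\SS_{k+1})$---also checks out. The paper itself supplies no explicit proof here (it simply says the statement is a corollary of Lemma~\ref{lm-galt-deletion}), so your job really is to fill in the surgery; unfortunately the surgery you sketch does not go through.

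The specific step that fails is in the singular case: you assert that after deleting $xy$ the split of the $xy$-alternating face $W$ ``still contains a cyclic subsequence $(x,y,x,y)$.'' This is not true in general. Write the two singular occurrences of $xy$ on $W$ as $\dots,x_1,xy,y_1,\dots$ and $\dots,y_2,xy,x_2,\dots$; deleting $xy$ produces one cyclic face $W_1$ running from $y_1$ to $y_2$ and another $W_2$ running from $x_2$ to $x_1$. If the arc of $W$ strictly between $y_1$ and $y_2$ meets $x$ exactly once and never $y$, and the arc strictly between $x_2$ and $x_1$ meets $y$ exactly once and never $x$, then $W$ has cyclic $x$-$y$ pattern $(x,y,x,y,x,y)$ and is genuinely $xy$-alternating, yet $W_1$ has pattern $(y,x,y)$ and $W_2$ has pattern $(x,y,x)$---neither contains $(x,y,x,y)$. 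So the alternation does not automatically survive the cut, and your contradiction with $\e(G)=0$ evaporates exactly in the case you single out as where ``the real work lies.''

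The non-singular branch has the same problem in a different guise. There you have an $xy$-alternating embedding of $G$ of genus $k+2$ and want to descend to a genus-$(k+1)$ embedding that is still $xy$-alternating. That descent is precisely the assertion $\e(G)=1$, i.e.\ the thing you are trying to prove; no general principle says that a handle can be removed at a distance from a prescribed facial walk, and Lemma~\ref{lm-galt-deletion} controls the effect of \emph{deleting an edge} on $\galt$, which is a different operation from removing a handle while keeping the edge set fixed. Invoking ``$\galt(G)=\gap(G)$'' and ``$G\in\Cc_k(\galt)$'' does not by itself locate the handle or keep $W$ intact. As written, both branches of your argument rest on unproved (and, in the singular case, false as stated) claims about how the pattern $(x,y,x,y)$ interacts with the deletion of $xy$, so the proposal has a genuine gap rather than a routine omission.
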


Mohar and \v{S}koda conjectured that all graphs in $\C_k(\galt)$ embed into $\SS_{k+1}$.

\begin{conjecture}[Mohar and \v{S}koda~\cite{mohar-obstr}]
\label{cj-same-surface}
  Each $G \in \C_k(\galt)$ embeds into $\SS_{k+1}$.
\end{conjecture}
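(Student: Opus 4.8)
Since $\galt = \g - \e$, every $G \in \G_{xy}$ satisfies $\g(G) = \galt(G) + \e(G)$ with $\e(G) \in \{0,1\}$ and $\g(G) \ge \galt(G)$. Hence a graph $G$ with $\galt(G) = k+1$ embeds into $\SS_{k+1}$ if and only if $\g(G) = k+1$, i.e. if and only if $\e(G) = 0$. So the conjecture is equivalent to the statement that every $G \in \C_k(\galt)$ has $\e(G) = 0$, and the plan is to argue by contradiction: assuming some $G \in \C_k(\galt)$ has $\e(G) = 1$, I will exhibit a minor-operation $\mu \in \M(G)$ with $\galt(\mu G) = \galt(G)$, contradicting $G \in \C(\galt)$. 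Observe first that such a $G$ has $\g(G) = k+2$ and, by Lemma~\ref{lm-alt-equiv}, $\theta(G) = 0$ and $\ep(G) = 1$ (so $\g(G^+) = k+2$ too).

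\emph{Reduction to obstructions.} Because $\M(G) = \dc{1}{\galt}$, every minor-operation drops $\galt$ from $k+1$ to at most $k$, and then Lemma~\ref{lm-galt-minor} gives $\galt(H) \le k$ for \emph{every} proper minor $H$ of $G$; thus $\g(H) = \galt(H) + \e(H) \le k+1$ and every proper minor of $G$ embeds into $\SS_{k+1}$. Since $\g(G) = k+2$, the underlying graph $\hat{G}$ is therefore an obstruction for $\SS_{k+1}$, and every one-step minor $\mu G$ with $\g(\mu G) = k+1$ must be $xy$-alternating (otherwise $\galt(\mu G) = k+1$). As $\g(G - e) \ge \g(G) - 1 = k+1$ and $G - e$ embeds into $\SS_{k+1}$, we get $\g(G-e) = k+1$ and hence $\e(G - e) = 1$ for \emph{every} edge $e$ (and similarly for every genus-$(k+1)$ contraction). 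So it suffices to prove: \emph{if $G$ is an $xy$-alternating obstruction for $\SS_{k+1}$ with non-adjacent terminals $x, y$, then some single minor-operation $\mu$ makes $\mu G$ fail to be $xy$-alternating.} This is the exact analogue, for $\galt$, of the phenomenon in Corollary~\ref{cr-hoppers}; the conjecture predicts that no such ``hopper'' arises for $\galt$, in contrast to $\gap$.

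\emph{Surgery on an alternating embedding.} Fix a minimum-genus $xy$-alternating embedding $\Pi$ of $G$ in $\SS_{k+2}$ and an $xy$-alternating $\Pi$-face $W = A_1A_2A_3A_4$, where the four arcs $A_i$ separate the cyclic occurrences $x, y, x, y$ on $W$. An alternating face carries a ``wasted'' handle --- this is precisely what lets equation~(\ref{eq-alternating}) save a unit of genus --- and the goal is to locate an edge $e$ (ideally interior to some arc $A_i$ and not incident with a terminal) such that, after deleting $e$ and sliding that handle along a curve through $A_i$, one reads off a genus-$(k+1)$ embedding of $G - e$ in which $x$ and $y$ lie on two distinct faces; one then argues that \emph{no} genus-$(k+1)$ embedding of $G - e$ can be $xy$-alternating, e.g. because re-attaching a handle to such an embedding would produce an $xy$-alternating genus-$(k+1)$ embedding of a minor whose $\galt$ is forced below $k+1$. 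To choose $e$ correctly I would use the finiteness of $\C_k(\galt)$ from Mohar and \v{S}koda~\cite{mohar-obstr}, bounding $|V(G)|$, together with the obstruction structure of $\hat{G}$ to control the $2$-separations and the way edges sit around $W$.

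\emph{Main obstacle.} The crux is the final step: proving that an $xy$-alternating obstruction is \emph{over-determined}, so that at least one genus-reducing minor-operation must destroy $xy$-alternability. The difficulty is that the minimum-genus embeddings of a minor $\mu G$ need not resemble $\Pi$ with the handle removed --- they can be entirely different --- so one must rule out \emph{all} of them being $xy$-alternating, not merely produce one that is not. I expect that handling this either requires a uniform handle-slide and face-tracing argument exploiting the four-arc structure of $W$, or, failing that, a finite (but potentially large) case analysis over the classes $\C_k(\galt)$; this gap is precisely why the statement remains a conjecture.
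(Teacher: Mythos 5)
This is an open \emph{conjecture}, not a theorem: the paper states it (crediting Mohar and \v{S}koda~\cite{mohar-obstr}), observes that Lemma~\ref{lm-alt-edge} settles the case $xy \in E(G)$, recasts the remaining difficulty as Conjecture~\ref{cj-no-hoppers} (``no hoppers of levels 1 and 2''), and stops. There is no proof in the paper for your attempt to be compared against, and you correctly recognize in your final paragraph that you also do not have one.

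Your reduction is, up to one slip, an accurate restatement of where the difficulty lies, and it matches the paper's own framing. Taking $G \in \C_k(\galt)$ with $xy \notin E(G)$ and $\e(G) = 1$ (equivalently $\g(G) = k+2$), Lemma~\ref{lm-alt-equiv} gives $\theta(G) = 0$ and $\ep(G) = 1$, whence $\gap(G) = \gp(G) - 1 = \galt(G) = k+1$; every $\mu \in \M(G)$ drops $\galt$ to at most $k$, so $\g(\mu G) \le k+1$ with equality forcing $\e(\mu G) = 1$, and ruling this out is exactly what the paper packages as the non-existence of level-2 hoppers via Lemma~\ref{lm-gap-minus-galt}, Lemma~\ref{lm-galt-minus-gap} and Corollary~\ref{cr-hoppers}. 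The small slip: you assert that $\hat{G}$ is an obstruction for $\SS_{k+1}$, but the $\G_{xy}$-minor-operations you control never identify $x$ with $y$, so you have not bounded $\g(\hat{G}/xy)$; fortunately that assertion is not actually used in the reduction, only in the motivating phrase ``$xy$-alternating obstruction''. The ``surgery'' paragraph is a plausible heuristic, not a proof, for the reason you yourself isolate: removing a handle from one alternating embedding $\Pi$ yields one genus-$(k+1)$ embedding of $\mu G$, but you must exclude that \emph{every} minimum-genus embedding of $\mu G$ is alternating, and those embeddings need bear no resemblance to the surgered $\Pi$. That gap is the conjecture, and neither you nor the paper closes it.
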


We suspect that there are no hoppers of level 1 and 2.

\begin{conjecture}
\label{cj-no-hoppers}
  There are no hoppers of level 1 and 2.
\end{conjecture}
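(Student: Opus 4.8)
The plan is to reduce Conjecture~\ref{cj-no-hoppers} to Conjecture~\ref{cj-same-surface}: I will show that a hopper of level 1 or 2 would be a graph $G\in\C_k(\galt)$, for $k:=\galt(G)-1$, with $g(G)=k+2$ --- that is, a member of $\C_k(\galt)$ that does not embed into $\SS_{k+1}$, which is exactly what Conjecture~\ref{cj-same-surface} rules out.

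First I would pin down the parameters of a putative hopper $G$. If $G\in\H^1$, then Lemma~\ref{lm-hopper-1} gives $\ep(G)=1$ and $\theta(G)=0$, so Lemma~\ref{lm-alt-equiv} gives $\e(G)=1$; hence $\galt(G)=g(G)-1$, $\gp(G)=g(G)$, and $\gap(G)=g(G)-1=\galt(G)$. If $G\in\H^2$, then $G\in\Cc(\gap)$ with $\ep(G)=1$; writing $j:=\gap(G)-1$, Corollary~\ref{cr-hoppers} gives that $G$ does not embed into $\SS_{j+1}$, so $g(G)\ge j+2$; since $g(G)=\gp(G)-\theta(G)=(\gap(G)+1)-\theta(G)=j+2-\theta(G)$, this forces $\theta(G)=0$, hence $\e(G)=1$ by Lemma~\ref{lm-alt-equiv} and $\galt(G)=g(G)-1=j+1=\gap(G)$. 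Thus in both cases $\gap(G)=\galt(G)$ and $g(G)=\galt(G)+1$; set $k:=\galt(G)-1$, so that $G$ does not embed into $\SS_{k+1}$.

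Next I would show $G\in\C_k(\galt)$. As $\gap(G)=\galt(G)$, the parameter $\e+\theta-\ep$ by which $\galt$ and $\gap$ are 1-separated vanishes at $G$, so (S2) gives $\dc1\gap\ss\dc1\galt$; and (S3) applied to the 1-separation of $\galt$ and $g$ by $\e$ gives $\dc2 g\ss\dc1\galt$. Since $\M(G)=\dc1\gap$ when $G\in\H^2$ and $\M(G)=\dc1\gap\cup\dc2 g$ when $G\in\H^1$, in either case $\M(G)\ss\dc1\galt$; as $\dc1\galt\ss\M(G)$ trivially, $\M(G)=\dc1\galt$, i.e.\ $G\in\C(\galt)$, and since $\galt(G)=k+1$ in fact $G\in\C_k(\galt)$. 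Now Conjecture~\ref{cj-same-surface} asserts that $G$ embeds into $\SS_{k+1}$, contradicting the second paragraph; hence, conditional on Conjecture~\ref{cj-same-surface}, there are no hoppers of level 1 or 2.

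The obvious obstacle is that Conjecture~\ref{cj-same-surface} is itself open, so the above is only conditional; an unconditional proof must either establish Conjecture~\ref{cj-same-surface} or argue directly. Observe that the reduction identifies a hopper with a graph $G\in\C_k(\galt)$ of genus $k+2$, and that for such $G$ the hypothesis $\galt(\mu G)\le k$ for all $\mu\in\M(G)$ forces $g(\mu G)\le k+1$ for all $\mu$; hence $\hat G$ would be a genus obstruction for $\SS_{k+1}$ which, for a suitable pair of non-adjacent terminals, carries an $xy$-alternating minimum-genus embedding while remaining critical for $\galt$. Since Mohar and \v{S}koda~\cite{mohar-obstr} proved that $\C_k(\galt)$ is finite, this can be checked for small $k$ by enumeration; the crux is a structural argument showing that no graph in $\C_k(\galt)$ admits an $xy$-alternating minimum-genus embedding (which would in fact prove Conjecture~\ref{cj-same-surface}), and this requires setting the criticality of $G$ for $\galt$ against the structure of such an embedding --- the step I expect to be genuinely hard.
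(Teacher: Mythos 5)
The statement you are asked to prove is stated in the paper as a \emph{conjecture} and is left open; there is no proof in the paper, only the remark that Conjecture~\ref{cj-no-hoppers} ``is a stronger version of'' Conjecture~\ref{cj-same-surface}, i.e.\ that no-hoppers implies same-surface. Your argument is sound as far as it goes, but it is a conditional reduction, not a proof. The parameter bookkeeping is correct: for $G \in \H^1$ Lemma~\ref{lm-hopper-1} gives $\ep(G)=1$, $\theta(G)=0$, and for $G\in\H^2$ the combination of Corollary~\ref{cr-hoppers} and $g = \gp - \theta = \gap + \ep - \theta$ forces $\theta(G)=0$; in both cases Lemma~\ref{lm-alt-equiv} gives $\e(G)=1$ and one deduces $\gap(G)=\galt(G)$ and $g(G)=\galt(G)+1$. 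The inclusions $\dc1\gap \ss \dc1\galt$ (via (S2), since $\e+\theta-\ep = 0$) and $\dc2 g \ss \dc1\galt$ (via (S3)) then give $\M(G)=\dc1\galt$, so $G\in\C_k(\galt)$ with $g(G)=k+2$, a counterexample to Conjecture~\ref{cj-same-surface}. You correctly flag that this only shows same-surface implies no-hoppers, and that the former is open.

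What is worth emphasizing is that you have proved the \emph{converse} of the implication the paper asserts. Together with the paper's (unproved but straightforward) implication --- namely that any counterexample $G\in\Cc_k(\galt)$ to Conjecture~\ref{cj-same-surface} satisfies $\e(G)=\ep(G)=1$, $\theta(G)=0$, $\gap(G)=k+1$, hence by Corollary~\ref{cr-galt} and Lemma~\ref{lm-galt-constraint} has $\M(G)=\dc1\gap\cup\dc2 g$ and is therefore a hopper of level $1$ or $2$, the case $xy\in E(G)$ being excluded by Lemma~\ref{lm-alt-edge} --- your reduction shows the two conjectures are in fact \emph{equivalent}, which sharpens the paper's ``stronger version'' phrasing. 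Your closing paragraph is a reasonable description of what an unconditional proof would need, though the precise reformulation of Conjecture~\ref{cj-same-surface} is that no $G\in\Cc_k(\galt)$ (with $xy\not\in E(G)$) has $\e(G)=1$; the case $xy\in E(G)$ is already settled. None of this, of course, constitutes a proof of the conjecture, and you should present it as a conditional equivalence rather than as a proof attempt.
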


Thus Conjecture~\ref{cj-no-hoppers} is a stronger version of a Conjecture~\ref{cj-same-surface}.
The following lemma shows that Conjecture~\ref{cj-same-surface} is true if $xy \in E(G)$.

\begin{lemma}
  \label{lm-alt-edge}
  Let $G \in \Gcxy$.
  Then $\galt(G) < \gap(G)$ if and only if $\ep(G) = 0$ and $\theta(G) = 1$.
\end{lemma}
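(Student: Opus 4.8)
The plan is to avoid any reasoning about embeddings and instead reduce the statement to arithmetic on the $\{0,1\}$-valued parameters $\theta$, $\e$, and $\ep$, using the dichotomy already established in Lemma~\ref{lm-galt-constraint} together with the identity $\gap(G) = g(G) + \theta(G) - \ep(G)$, which follows immediately from $\gap = \gp - \ep$ and $\gp = g + \theta$.

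For the ``only if'' direction, suppose $\galt(G) < \gap(G)$. By Lemma~\ref{lm-galt-constraint}, $\galt(G)$ equals either $g(G)$ or $\gap(G)$, and the strict inequality rules out the second case, so $\galt(G) = g(G)$. Substituting this into $\gap(G) = g(G) + \theta(G) - \ep(G)$ yields $\gap(G) = \galt(G) + \theta(G) - \ep(G)$, and $\galt(G) < \gap(G)$ then forces $\theta(G) - \ep(G) \ge 1$. Since $\theta(G) \in \{0,1\}$ and $\ep(G) \in \{0,1\}$, this is only possible when $\theta(G) = 1$ and $\ep(G) = 0$, as required.

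For the ``if'' direction, suppose $\ep(G) = 0$ and $\theta(G) = 1$. By Lemma~\ref{lm-alt-equiv}, having $\e(G) = 1$ would force $\theta(G) = 0$; hence $\e(G) = 0$ and therefore $\galt(G) = g(G) - \e(G) = g(G)$. On the other hand $\gap(G) = \gp(G) - \ep(G) = \gp(G) = g(G) + \theta(G) = g(G) + 1$, so $\gap(G) = \galt(G) + 1 > \galt(G)$.

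There is no genuine obstacle here; the one point that needs care is to invoke Lemma~\ref{lm-galt-constraint} at the start of the forward direction to pin down $\galt(G) = g(G)$, rather than trying to establish $\e(G) = 0$ directly, after which the conclusion is entirely forced by the bounds $0 \le \theta(G), \ep(G) \le 1$. It is worth noting in passing that this argument, together with Lemma~\ref{lm-galt-gp-sep}, confirms that the parameter $\e + \theta - \ep$ that $1$-separates $\galt$ and $\gap$ indeed takes only the values $0$ and $1$, in agreement with Fig.~\ref{fg-poset}.
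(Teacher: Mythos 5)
Your proof is correct and follows essentially the same route as the paper: both directions hinge on Lemma~\ref{lm-galt-constraint} to reduce to $\galt(G) = g(G)$ in the forward direction and on Lemma~\ref{lm-alt-equiv} to extract $\e(G)=0$ in the converse, then conclude by arithmetic on the $\{0,1\}$-valued parameters. The only cosmetic difference is that you use the identity $\gap = g + \theta - \ep$ directly in the forward direction, bypassing the paper's explicit appeal to the $1$-separation of $\galt$ and $\gap$ and its implicit use of $\e(G)=0$; the content is the same.
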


\begin{proof}
  If $\galt(G) < \gap(G)$, then $\galt(G) = \g(G)$ by Lemma~\ref{lm-galt-constraint}.
  Since $\galt$ and $\gap$ are 1-separated,  $\gap(G) - \galt(G) = 1 = \e(G) + \theta(G) - \ep(G)$.
  Since $\e(G) = 0$, we obtain that $\theta(G) = 1$ and $\ep(G) = 0$ as required.

  If $\ep(G) = 0$ and $\theta(G) = 1$, then $\e(G) = 0$ by Lemma~\ref{lm-alt-equiv}.
  Thus $\galt(G) < \galt(G) + \e(G) + \theta(G) - \ep(G) = \gap(G)$.
\end{proof}

Lemmas~\ref{lm-gap-minus-galt} and~\ref{lm-alt-edge} assert that a hopper of level 2 belongs to the class $\Cc_k(\galt)$.

\section{Dumbbells}

Lemma~\ref{lm-sides-mu} provides information about all minor-operations in a part of an $xy$-sum except for a deletion of an edge that disconnects the graph.
Since the $xy$-sum is 2-connected, deletion of such a cut-edge of $G_1$ separates $x$ and $y$.
In this section, we determine how minor-tight parts of an $xy$-sum with such a cut-edge look like.

If $G_1 \in \Gcxy$ and $b \in E(G_1)$ is a cut-edge of $G_1$ whose deletion separates $x$ and $y$, 
we say that $G_1$ is a \df{dumbbell} with \df{bar} $b$.

\begin{lemma}
  \label{lm-dumbbell-ep}
  If $G_1$ is a dumbbell with bar $b$, then $\ep(G_1) = 0$ and $(b, /) \not\in \dc1 g \cup \dc1 \gp$.
\end{lemma}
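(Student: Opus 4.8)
The plan is to analyze the structure of a dumbbell directly from the fact that its bar $b$ is a cut-edge whose removal separates $x$ from $y$. Write $G_1 = A \cup B$ where $A$ and $B$ are the two sides of the cut determined by $b$, with $x \in V(A)$ and $y \in V(B)$, and $b = uv$ with $u \in V(A)$, $v \in V(B)$. Since $xy \notin E(G_1)$, every $x$-$y$ path in $G_1^+ - xy$ (equivalently in $G_1$) must traverse $b$, so $b$ is also a cut-edge of $G_1^+$ separating $\{x\} \cup (V(A)\setminus\{v\})$-side from the other, and $x$ and $y$ lie on opposite sides of $b$ in $G_1^+$ as well.

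First I would prove $\ep(G_1) = 0$, i.e. $G_1^+$ is not $xy$-alternating. Suppose for contradiction that $\Pi$ is an $xy$-alternating embedding of $G_1^+$ with an $xy$-alternating face $W$, so $(x,y,x,y)$ is a cyclic subsequence of $W$. Consider the closed walk $W$ and the edge $b = uv$: since $b$ is a cut-edge, it is singular (it appears twice in the same facial walk, because deleting it disconnects the surface-embedded graph into two pieces and the two sides of $b$ must be glued along a single face containing $b$ twice; more carefully, a cut-edge in any embedding is traversed twice by one facial walk). The facial walk through $b$ has the form $(b, P, b, Q)$ where $P$ is a walk entirely within the side $B$ (containing $y$) and $Q$ is a walk entirely within the side $A$ (containing $x$). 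Hence on any single facial walk, all occurrences of $x$ lie in one of the two arcs between the two copies of $b$, and all occurrences of $y$ lie in the other arc. Therefore no facial walk can contain $x$ and $y$ in alternating cyclic order $(x,y,x,y)$ — the $x$'s and the $y$'s are separated into two contiguous blocks by the two copies of $b$. This contradicts the existence of $W$, so $\ep(G_1) = 0$.

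Next I would show $(b,/) \notin \dc1 g \cup \dc1 \gp$, i.e. contracting the bar decreases neither $g$ nor $\gp$. For $g$: contracting a cut-edge does not change the genus, since by Theorem~\ref{th-battle-ori} the genus is the sum over blocks, and contracting $b$ just merges the trivial block $\{b\}$ away while leaving all other blocks unchanged; hence $g(G_1/b) = g(G_1)$ and $(b,/) \notin \dc1 g$. For $\gp$: I claim $g((G_1/b)^+) = g(G_1^+)$. The graph $(G_1/b)^+$ is obtained from $G_1^+$ by contracting $b$; but in $G_1^+$ the edge $b$ is no longer a cut-edge (the added edge $xy$ together with paths through $A$ and $B$ creates a cycle using $b$), so I cannot invoke Battle's theorem directly. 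Instead I would argue: $g((G_1/b)^+) \le g(G_1^+)$ is automatic by minor-monotonicity, and for the reverse, given a minimum-genus embedding of $(G_1/b)^+ = G_1^+/b$, I can "uncontract" $b$ by splitting the merged vertex and inserting $b$ as a short edge within a face incident to it, which does not increase genus — this shows $g(G_1^+) \le g((G_1/b)^+)$, hence equality and $(b,/)\notin \dc1\gp$.

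The main obstacle I anticipate is the careful topological argument in the $\ep$ part: pinning down precisely that a cut-edge forces the $x$-occurrences and $y$-occurrences into separate arcs of every facial walk. The cleanest route is probably to use that deleting a cut-edge from an embedded graph splits one face into two (this is essentially established in the proof of Lemma~\ref{lm-galt-deletion}, where a singular edge's deletion splits its face), so the facial walk $W$ through $b$ has the form $(u, b, v, P_B, v, b, u, P_A)$ with $P_B \subseteq B$ and $P_A \subseteq A$; then since $x \in V(A)$ and $y \in V(B)$, the vertex $y$ can only occur in $P_B$ and $x$ only in $P_A$, ruling out the alternating pattern. For the $\gp$ part, the uncontraction argument is standard but I would want to state it as: inserting a degree-2 (or pendant) vertex subdivision-style into a face never raises genus, applied to re-expand the contracted vertex.
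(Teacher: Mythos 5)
The argument for $\ep(G_1)=0$ has a genuine gap. You correctly note that $b$ is a cut-edge of $G_1$, but the embedding $\Pi$ you are analyzing is an embedding of $G_1^+$, and $b$ is \emph{not} a cut-edge of $G_1^+$: the added edge $xy$ together with $b$ and paths inside $A$ and $B$ forms a cycle. So the assertion ``since $b$ is a cut-edge, it is singular'' does not apply to $\Pi$, and the conclusion that the facial walk through $b$ has the form $(b,P,b,Q)$ with $P$ entirely inside $B$ and $Q$ entirely inside $A$ is false: the walk can cross between the $A$-side and the $B$-side via $xy$ as well as via $b$. Indeed, if both $b$ and $xy$ are singular and each appears twice in a face, that face has four arcs alternating $A,B,A,B$, and \emph{a priori} nothing stops $x$ from appearing in both $A$-arcs and $y$ in both $B$-arcs, which would exactly be an $(x,y,x,y)$ pattern. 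The paper handles this differently: it first uses a pigeonhole argument on the four $x$--$y$ subwalks of the alternating face (each must use $b$ or $xy$, and each edge occurs only twice in total) to show that both $b$ and $xy$ are singular and lie in $W$, locates a third occurrence of a terminal in $W$ not incident with $xy$, and then performs an explicit local re-rotation at that terminal that splits $W$ into three faces, lowering the genus of $G_1^+$ and giving the contradiction. Your ``arcs are separated'' shortcut does not survive once $xy$ is present.

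The $\gp$ part is also not sound as written. Your plan is to uncontract $b$ inside a minimum-genus embedding of $(G_1/b)^+$ without raising genus, but uncontraction of an edge does not preserve genus in general: the neighbors of the contracted vertex must split into two cyclic intervals in the local rotation for the split to be cost-free, and you give no reason why that holds here. (You invoke Battle's theorem correctly for the $g$ part, but that uses that $b$ is a cut-edge of $G_1$; in $G_1^+/b$ the relevant vertex need not be a cut vertex, so the block decomposition does not immediately help.) The paper instead proves $(b,/)\notin\dc1\gp$ by a case split: if $b$ meets a terminal, it applies Theorem~\ref{th-battle-ori} directly to $G_1^+$; if not, it views $G_1^+$ as a $yz$-sum across the endpoint $z$ of $b$ and invokes Theorem~\ref{th-decker-ori}, together with the just-proved $\ep=0$ for the resulting dumbbells, to conclude $g(\mu G_1^+)=g(G_1^+)$. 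You would need something of that shape to close the gap; the generic ``uncontract inside a face'' claim is not enough.
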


\begin{proof}
  Suppose for a contradiction that $\ep(G_1) = 1$; then there exists an $xy$-alternating minimum-genus embedding $\Pi$ of $G_1^+$.
  Let $W$ be an $xy$-alternating $\Pi$-facial walk.
  The walk $W$ can be split into 4 subwalks containing $x$ and $y$.
  Each of the edges $xy$ and $b$ appears precisely twice in the $\Pi$-facial walks 
  (either once in two different $\Pi$-facial walks or twice in a single $\Pi$-facial walk).
  Since each walk from $x$ to $y$ has to use either $xy$ or $b$, both $xy$ and $b$ are singular edges that
  appear twice in $W$. Since $\Pi$ is an orientable embedding, the edge $xy$ appears in $W$ once in the direction from $x$ to $y$
  and once from $y$ to $x$. Hence, there is another appearance of one of the terminals, say $x$, in $W$ that is not incident with the edge $xy$.
  We can write $W$ as $W = (x,xy,y,W_1,e_1,x,e_2,W_2,y,xy,x,e_3,W_3,e_4,x)$. The local rotation around $x$ can be written as $(xy, e_4, S_1, e_2, e_1, S_2, e_3)$.
  Let $\Pi'$ be the embedding obtained from $\Pi$ by letting $\Pi'(v) = \Pi(v)$ for $v \in V(G_1) \sm \{x\}$
  and $\Pi(x) = (e_4, S_1, e_2, xy, e_1, S_2, e_3)$. All $\Pi$-facial walks except $W$ are also $\Pi'$-facial walks as all $\Pi$-angles not incident with $W$
  are also $\Pi'$-angles. The $\Pi$-facial walk $W$ is split into three $\Pi'$-facial walks: $(x,xy,y,W_1,e_1,x)$, $(x, e_3,W_3,e_4,x)$, 
  and $(x, e_2, W_2, y, xy, x)$. Thus $g(\Pi') < g(\Pi)$, a contradiction with $\Pi$ being a minimum-genus embedding of $G_1^+$.
  We conclude that $\ep(G_1) = 0$.

  Let $\mu = (b, /)$ be the contraction operation of $b$ in $G_1$. We shall show that $\mu \not\in \dc1 g \cup \dc1 \gp$.
  Let $H_1$ and $H_2$ be the components of $G_1 - b$.
  By Theorem~\ref{th-battle-ori}, $g(G_1) = g(H_1) + g(H_2) = g(\mu G_1)$.
  If $b$ is incident with a terminal, say $b = zy$, $z \in V(H_1)$, then
  $G_1^+$ is the 1-sum of $H_1 + b + xy$ and $H_2$.
  By Theorem~\ref{th-battle-ori}, 
  $$g(G_1^+) = g(H_1 + b + xy) + g(H_2) = g(H_1 + xz) + g(H_2) = g(\mu G_1^+).$$
  Thus $g^+(G_1) = \gp(\mu G_1)$.

  Suppose that $b$ is not incident with a terminal and let $z \in V(H_1)$ be an endpoint of $b$. 
  Consider the graphs $H_1' = H_1 + xy$ and $H_2' = H_2 + b$ as members of the class $\G_{yz}^\circ$.
  Observe that $H_1'$ and $H_2'$ are dumbbells (in $\G_{yz}^\circ$).
  We have already shown that $\ep(H_1') = \ep(H_2') = 0$
  and $g(\mu H_2') = g(H_2')$ and, since the bar of $H_2'$ is incident with a terminal, 
  $\gp(\mu H_2') = \gp(H_2')$.
  By Theorem~\ref{th-decker-ori} (when $G_1^+$ is viewed as a $yz$-sum of $H_1'$ and $H_2'$),
  \begin{align*}
    g(G_1^+) &= \min\{g(H_1') + g(H_2') +1, \gp(H_1') + \gp(H_2')\}, \text{ and} \\
    g(\mu G_1^+) &= \min\{g(H_1') + g(\mu H_2') +1, \gp(H_1') + \gp(\mu H_2')\}. 
  \end{align*}
  Since $g(\mu H_2') = g(H_2')$ and $\gp(\mu H_2') = \gp(H_2')$, we conclude that $g(\mu G_1^+) = \g(G_1^+)$. 
  Thus $\gp(\mu G_1) = \gp(G_1)$.
  This shows that $\mu \not\in \dc1 g \cup \dc1 \gp$.
\end{proof}

\begin{lemma}
  \label{lm-dumbbell-unique}
  Let $G$ be an $xy$-sum of connected graphs $G_1$ and $G_2$.
  If $G_1$ is a dumbbell with bar $b$ and $G_1$ is minor-tight in $G$, 
  then $\ep(G_1 / b) = 1$ and $b$ is unique, that is, $G_1$ has a single cut-edge separating $x$ and $y$.
\end{lemma}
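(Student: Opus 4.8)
The plan is to prove the two assertions separately. For $\ep(G_1/b)=1$: since $b$ separates $x$ and $y$, the graph $G_1/b$ is connected, so with $\mu=(b,/)$ we may apply Corollary~\ref{cr-sides}. Minor-tightness of $G_1$ gives $g(\mu G)<g(G)$, hence $\mu\in\dc1 g \cup \dc1 \gp \cup \dc1 \gap$; but Lemma~\ref{lm-dumbbell-ep} already rules out $\mu\in\dc1 g \cup \dc1 \gp$ (and gives $\ep(G_1)=0$), so $\mu\in\dc1 \gap$. Unfolding $\gap=\gp-\ep$: from $\ep(G_1)=0$ we get $\gap(G_1)=\gp(G_1)$, and from $\mu\notin\dc1 \gp$ together with minor-monotonicity of $\gp$ we get $\gp(G_1/b)=\gp(G_1)$; thus $\gap(G_1/b)=\gp(G_1)-\ep(G_1/b)<\gap(G_1)=\gp(G_1)$ forces $\ep(G_1/b)=1$.

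For uniqueness, suppose $G_1$ has two distinct cut-edges $b$ and $b'$, each separating $x$ and $y$, and aim for a contradiction with $\ep(G_1/b)=1$. Write $G_1-b'=M\sqcup N$ with $x\in M$, $y\in N$ and $b'$ the unique $M$--$N$ edge; since $b\neq b'$, the edge $b$ lies entirely inside $M$ or entirely inside $N$, so $(G_1/b)-b'$ is disconnected with $x$ and $y$ on opposite sides, i.e.\ the image of $b'$ is still a cut-edge of $G_1/b$ separating $x$ and $y$. Now split according to whether contracting $b$ creates the edge $xy$. If it does not, then $G_1/b\in\Gcxy$ is a dumbbell with bar the image of $b'$, and Lemma~\ref{lm-dumbbell-ep} gives $\ep(G_1/b)=0$, a contradiction. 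If it does, then $b$ must be incident with exactly one terminal, say $b=xu$ with $u$ a non-terminal adjacent to $y$; then $uy$ is an $M$--$N$ edge, so $uy=b'$, and the image of $b'$ is precisely the edge $xy$ of $G_1/b$, which is a cut-edge separating $x$ and $y$. Since $xy\in E(G_1/b)$ we have $(G_1/b)^+=G_1/b$; pick an $xy$-alternating minimum-genus embedding of it (which exists as $\ep(G_1/b)=1$) and let $W$ be an $xy$-alternating face. Choosing occurrences of $x,y,x,y$ along $W$ in this cyclic order partitions $W$ into four edge-disjoint subwalks each joining $x$ to $y$, and each of them must traverse the cut-edge $xy$; hence $xy$ occurs at least four times in $W$, contradicting the fact that every edge occurs at most twice among all facial walks.

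The bookkeeping with $\gp$, $\gap$, $\ep$ and the verification that the image of $b'$ is neither deleted nor turned into a parallel edge when $b$ is contracted are routine (the latter because a cut-edge is never parallel to anything). The step that needs care is the dichotomy in the uniqueness part: one has to notice that $G_1/b$ leaves $\Gcxy$ exactly when contracting $b$ creates $xy$, that in this case the second bar $b'$ is forced to equal $uy$, and that the resulting edge $xy$ of $G_1/b$ is itself a cut-edge separating the terminals --- this is what lets us reuse, essentially verbatim, the ``four subwalks through a single $x$--$y$ cut-edge'' argument from the proof of Lemma~\ref{lm-dumbbell-ep}. I expect that case to be the main obstacle.
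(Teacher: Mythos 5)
Your proof is correct. The first half follows the paper verbatim: combine Corollary~\ref{cr-sides} with Lemma~\ref{lm-dumbbell-ep} to place $(b,/)$ in $\dc1\gap\sm\dc1\gp$ and then unfold $\gap=\gp-\ep$. For uniqueness, the paper's own argument is two lines: if $e\ne b$ is a second bar, then ``$G_1/b$ is a dumbbell with bar $e$,'' so Lemma~\ref{lm-dumbbell-ep} gives $\ep(G_1/b)=0$, a contradiction. Since dumbbells are by definition members of $\Gcxy$, this tacitly assumes that contracting $b$ does not create the edge $xy$. You correctly observe that this assumption can fail --- precisely when $b=xu$ with $u$ adjacent to $y$, in which case the second bar must be $uy$ and its image under the contraction is the edge $xy$ itself --- and you supply the missing argument for that case: $xy$ is then a bridge of $(G_1/b)^+=G_1/b$ separating $x$ from $y$, so each of the four subwalks of an $xy$-alternating facial walk would have to traverse $xy$, giving at least four occurrences of $xy$ in a single face, which is impossible. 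This is essentially a stripped-down instance of the argument inside the proof of Lemma~\ref{lm-dumbbell-ep} (which must juggle the two competing bridges $xy$ and $b$ and therefore resorts to the local-rotation surgery). So your proposal matches the paper's strategy and in addition closes a small gap that the paper's proof leaves implicit.
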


\begin{proof}
  By Lemma~\ref{lm-dumbbell-ep} and Corollary~\ref{cr-sides}, $(b, /) \in \dc1 \gap \sm \dc1 \gp$.
  It is immediate that $\ep(G_1 / b) = 1$.

  For the second part, suppose that there is another bar $e \not= b$ in $G_1$.
  By Lemma~\ref{lm-dumbbell-ep}, $\ep(G_1 / b) = 0$ as $G_1/b$ is a dumbbell with bar $e$, a contradiction.
  We conclude that $b$ is unique.
\end{proof}

Let $\D$ be the class of dumbbells $G_1$ with bar $b$ such that $\theta(G_1) = 0$, $\mu \in \dc1 g$ for each $\mu \in \M(G_1) \sm \{(b, -), (b, /)\}$,
and $\ep(G_1 / b) = 1$. 

\begin{lemma}
  \label{lm-dumbbell}
  Let $G$ be an $xy$-sum of connected graphs $G_1$ and $G_2$ such that $G_1$ is a dumbbell.
  Then $G_1$ is minor-tight in $G$ if and only if $\ep(G_2) = 1$ and one of the following holds:
  \begin{enumerate}[label=\rm(\roman*)]
  \item
    $G_1 \in \Cc(\gap) \sm \Cc(\galt)$, $\theta(G_1) = 1$, and either $xy \in E(G)$ or $\eta(G_1, G_2) = 1$.
  \item
    $G_1 \in \D$, $xy \not\in E(G)$, and $\eta(G_1, G_2) = 1$.
  \end{enumerate}
\end{lemma}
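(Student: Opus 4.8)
The statement is an if-and-only-if, so I will argue both directions, using Lemma~\ref{lm-sides-mu} as the central tool together with the structural facts about dumbbells collected in Lemmas~\ref{lm-dumbbell-ep} and~\ref{lm-dumbbell-unique}. The first observation is that since $G_1$ is a dumbbell, Lemma~\ref{lm-dumbbell-ep} gives $\ep(G_1)=0$. If $G_1$ is minor-tight, then in particular the cut-edge $b$ is minor-tight, and Lemma~\ref{lm-dumbbell-unique} tells us $(b,/)\in\dc1\gap\sm\dc1\gp$ and that $\ep(G_1/b)=1$. Now I would observe that $\ep(G_1)=0$ forces $\ep(G_2)=1$: looking at Table~\ref{tb-side-options} (equivalently, Lemma~\ref{lm-sides-mu}), whenever $\ep(G_2)=0$ every minor-tight minor-operation lies in $\dc1 g\cup\dc1\gp$ by Corollary~\ref{cr-sides}, contradicting $(b,/)\notin\dc1\gp$ together with $(b,/)\notin\dc1 g$ (the latter from Lemma~\ref{lm-dumbbell-ep}). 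So from now on $\ep(G_2)=1$, and the relevant rows of Table~\ref{tb-side-options} are exactly those with $\ep(G_2)=1$.

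**The ``only if'' direction.** Assume $G_1$ is minor-tight and $\ep(G_2)=1$. I will split on the value of $\eta(G_1,G_2)$ (when $xy\notin E(G)$) or on whether $xy\in E(G)$. Since $(b,/)\in\dc1\gap\sm\dc1\gp$, and $\gp=\galt+\e+\theta$, the fact that $(b,/)$ decreases $\gap$ but not $\gp$ needs to be reconciled with the row of Table~\ref{tb-side-options} that applies. The row for $xy\in E(G)$, $\ep(G_2)=1$ requires every $\mu\in\M(G_1)$ to lie in $\dc1\gap$; the row for $\eta=-1$ requires the same; the row for $\eta=0$ requires $\dc2 g\cup\dc1\gap$; the row for $\eta=1$ requires $\dc1 g\cup\dc1\gap$; the row for $\eta=2$ requires $\dc1 g\cup\dc2\gap$. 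I would now argue that the $\eta=-1$ and $\eta=2$ cases cannot occur for a dumbbell whose side is minor-tight: for $\eta=2$ we would need $\theta(G_1)=\theta(G_2)=1$, but also $(b,/)$ would have to be in $\dc1 g\cup\dc2\gap$, and since $(b,/)\notin\dc1 g$ we would need $(b,/)\in\dc2\gap$, forcing $\gap(G_1/b)\le\gap(G_1)-2$; combined with $\ep(G_1/b)=1$ and $\ep(G_1)=0$, $\theta(G_1)=1$ this should be shown impossible by a Battle-type additivity argument across the bar (as in the proof of Lemma~\ref{lm-dumbbell-ep}, $g$ and hence $\gp$ are unchanged by contracting $b$, so $\gap$ drops by at most~$1$). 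The $\eta=0$ row requires $\dc2 g\cup\dc1\gap$ for \emph{every} minor-operation, including those in $\M(G_1)\sm\{(b,-),(b,/)\}$; a short argument (dumbbells are $1$-separated-friendly via (S1)--(S3)) rules this out unless the side is actually in a smaller class, and I would dispatch it by noting $(b,-)$ disconnects $G_1$ so is not of the form handled, forcing the analysis onto $\gap$ entirely and contradicting $G_1\notin\Cc(\galt)$ possibilities; this leaves precisely $xy\in E(G)$ or $\eta=1$. In the surviving cases the condition on all $\mu\ne(b,-),(b,/)$ is $\mu\in\dc1\gap$ (when $xy\in E(G)$) or $\mu\in\dc1 g\cup\dc1\gap$ (when $\eta=1$). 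Combining with $(b,/)\in\dc1\gap$, one gets either $\M(G_1)=\dc1\gap$-ish, giving $G_1\in\Cc(\gap)$, or the mixed behaviour of $\D$. To separate (i) from (ii) I use $\theta(G_1)$: if $\theta(G_1)=1$ then by (S1) $\dc1 g\subseteq\dc1\gp$ and Lemma~\ref{lm-gap-minus-galt}-type reasoning places $G_1\in\Cc(\gap)\sm\Cc(\galt)$ (the ``$\sm\Cc(\galt)$'' coming from $\ep(G_1/b)=1>\ep(G_1)=0$ via Lemma~\ref{lm-alt-edge} and $(b,-)\in\dc1\galt$); if $\theta(G_1)=0$, then since $\ep(G_1)=0$ we cannot have $G_1\in\Cc(\gap)$ forced (else $\galt=\gap$ by Lemma~\ref{lm-alt-edge}, contradicting $(b,/)\notin\dc1\gp=\dc1\galt$), and instead the definition of $\D$ is matched: $\theta(G_1)=0$, every non-bar operation in $\dc1 g$, and $\ep(G_1/b)=1$; the hypothesis $\eta=1$ with $xy\notin E(G)$ then pins down (ii), because $\theta(G_1)=0$ precludes $xy\in E(G)$ being in case (i)'s disjunct here (the $xy\in E(G)$ row would force all operations into $\dc1\gap$, incompatible with $\theta(G_1)=0$, $G_1\notin\Cc(\gap)$).

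**The ``if'' direction.** Conversely, assume $\ep(G_2)=1$ and either (i) or (ii). In case (i), $G_1\in\Cc(\gap)$ means $\M(G_1)=\dc1\gap$, so for the row of Table~\ref{tb-side-options} that applies ($xy\in E(G)$ with $\ep(G_2)=1$, requiring $\dc1\gap$; or $\eta=1$ with $\ep(G_2)=1$, requiring $\dc1 g\cup\dc1\gap\supseteq\dc1\gap$) every minor-operation of $G_1$ satisfies the criterion of Lemma~\ref{lm-sides-mu}, hence $g(\mu G)<g(G)$ for all $\mu\in\M(G_1)$, i.e.\ $G_1$ is minor-tight; I should double-check that $\eta(G_1,G_2)=1$ is consistent with $\theta(G_1)=1$ and $\ep(G_1)=0$, which it is since $\eta=\theta(G_1)+\theta(G_2)-\ep(G_1)\ep(G_2)=1+\theta(G_2)-0$ forces $\theta(G_2)=0$, a consistent choice. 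In case (ii), $G_1\in\D$ gives $\theta(G_1)=0$, all non-bar operations in $\dc1 g$, and $\ep(G_1/b)=1$; with $xy\notin E(G)$ and $\eta=1$, $\ep(G_2)=1$, the applicable row requires each $\mu\in\dc1 g\cup\dc1\gap$: non-bar operations are in $\dc1 g$ by definition of $\D$; for $(b,/)$, since $\ep(G_1/b)=1$ while $\ep(G_1)=0$ and $\theta(G_1)=0$, Lemma~\ref{lm-alt-equiv}/Lemma~\ref{lm-galt-constraint} give $\galt(G_1)=g(G_1)$ and $\gap(G_1)=\galt(G_1)+0+0-0$... here I must be careful: $\gap(G_1)=\gp(G_1)-\ep(G_1)=g(G_1)+\theta(G_1)-0=g(G_1)$, while by Lemma~\ref{lm-dumbbell-ep} $g(G_1/b)=g(G_1)$ and $\gp(G_1/b)=\gp(G_1)$, so $\gap(G_1/b)=\gp(G_1/b)-\ep(G_1/b)=\gp(G_1)-1=g(G_1)-1<g(G_1)=\gap(G_1)$, i.e.\ $(b,/)\in\dc1\gap$; and for $(b,-)$, it disconnects $G_1$ and is not in $\M(G_1)$ as a standard side-operation — rather, it is handled because $b$ being the bar means $(b,-)$ separates $x,y$, and such an operation trivially satisfies $g(\mu G)<g(G)$ since $\mu G$ then has a cut structure reducing genus; more carefully, $(b,-)$ yields $G_1-b$ disconnected so $G$ has connectivity dropping and $\theta$ behaviour from Theorem~\ref{th-decker-ori}(ii) forces the genus down — I would verify $(b,-)\in\dc1\gap$ directly since $G_1-b$ has $x,y$ in different components, making it trivially $xy$-alternating-friendly in the relevant sense, hence $\gap(G_1-b)<\gap(G_1)$. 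Thus every minor-operation of $G_1$ meets the Lemma~\ref{lm-sides-mu} criterion and $G_1$ is minor-tight.

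**Main obstacle.** The delicate point is the bookkeeping that separates cases (i) and (ii) cleanly and, in particular, ruling out $\eta\in\{-1,0,2\}$ in the ``only if'' direction and showing that $\theta(G_1)$ is exactly the right discriminator. Each of those exclusions rests on a Battle-type additivity argument across the bar $b$ (contracting $b$ does not change $g$ or $\gp$, only possibly $\ep$ and hence $\gap$ and $\galt$), of the same flavour as the proof of Lemma~\ref{lm-dumbbell-ep}; assembling these consistently — and checking that the $\D$-definition's clause ``$\mu\in\dc1 g$ for each $\mu\in\M(G_1)\sm\{(b,-),(b,/)\}$'' is exactly what survives — is where the real work lies, though none of it requires new ideas beyond Lemma~\ref{lm-sides-mu}, Corollary~\ref{cr-sides}, and the 1-separation rules (S1)--(S3).
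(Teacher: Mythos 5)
Your overall architecture matches the paper's: establish $\ep(G_1)=0$ from Lemma~\ref{lm-dumbbell-ep}, get $\ep(G_2)=1$ from Corollary~\ref{cr-sides} applied to $(b,/)$, and then split on $\theta(G_1)$ to separate case (i) from case (ii). That part is sound. But there is a genuine gap in how you treat the bar deletion $(b,-)$, and it propagates through both directions.

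The gap: $(b,-)$ \emph{is} a minor-operation in $\M(G_1)$ (you write it ``is not in $\M(G_1)$ as a standard side-operation,'' which is simply false -- $\M(G_1)=E(G_1)\times\{-,/\}$ includes it), and it is the one operation Lemma~\ref{lm-sides-mu} cannot touch, because $G_1-b$ is disconnected. Consequently, showing $(b,-)\in\dc1\gap$ -- even if you could make that precise for a graph in which $x$ and $y$ lie in different components -- does \emph{not} yield $g(G-b)<g(G)$; that implication is exactly what Lemma~\ref{lm-sides-mu} provides, and only under its connectivity hypothesis. So your ``if'' direction is incomplete for both (i) and (ii): after disposing of the operations that keep $G_1$ connected, you still owe a separate argument that $g(G-b)<g(G)$. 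The paper closes this by computing $g(G-b)$ directly: after deleting $b$, the graph $G-b$ has $x$ and $y$ as cut-vertices and decomposes into the blocks of $\hat{G_1-b}$ together with $\hat{G_2}$ (or $\hat{G_2^+}$), so Theorem~\ref{th-battle-ori} gives $g(G-b)=g(G_1)+g(G_2)$ (resp.\ $g(G_1)+\gp(G_2)$), and comparing against $g(G)=\h_0(G)$ or $\h_1(G)$ from Theorem~\ref{th-decker-ori} gives the strict inequality. Nothing in your sketch performs this computation.

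The same omission undermines the ``only if'' direction when $\theta(G_1)=0$. You need to exclude $xy\in E(G)$ and $\eta(G_1,G_2)=0$, and the exclusion does not follow from incompatibility of $\dc1\gap$ with $\theta(G_1)=0$ as you suggest; it follows from the fact that in those cases the very same Battle/Decker computation shows $g(G-b)=g(G)$, i.e.\ $(b,-)$ fails to decrease the genus, contradicting minor-tightness. Your ``main obstacle'' paragraph gestures at ``a Battle-type additivity argument across the bar'' but misidentifies what it is for: you aim it at bounding how much $\gap(G_1/b)$ can drop, whereas the essential use of Battle's theorem here is to evaluate $g(G-b)$ of the \emph{whole} $xy$-sum. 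Until you carry out that computation -- once to exclude the forbidden parameter ranges in the forward direction, once to certify $(b,-)$ in the converse -- the proof is not complete.
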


\begin{proof}
  Assume that $G_1$ is minor-tight in $G$.
  By Lemmas~\ref{lm-dumbbell-ep} and~\ref{lm-dumbbell-unique}, $\ep(G_1) = 0$ and $G_1$ has a unique bar $b$ for which it holds that $(b, /) \not\in \dc1 g \cup \dc1 \gp$ and $\ep(G_1 / b) = 1$.
  Hence $\gap(G_1/b) = \gap(G_1) - 1$ and we have that $(b, /) \in \dc1 \gap \sm \dc2 \gap$.
  By Corollary~\ref{cr-sides}, $\ep(G_2) = 1$.

  Assume first that $\theta(G_1) = 1$. We shall show that (i) holds. 
  If $xy \not\in E(G)$ and $\eta(G_1, G_2) = 2$, then $(b, /)$ violates Lemma~\ref{lm-sides-mu} as $(b, /) \not\in \dc1 g \cup \dc2 \gap$.
  Thus either $xy \in E(G)$ or $\eta(G_1, G_2) \le 1$.
  Since $\ep(G_1) = 0$ and $\theta(G_1) = 1$, we conclude that either $xy \in E(G)$ or $\eta(G_1, G_2) = 1$.

  Since $g(G_1 - b) = g(G_1)$ and $\e(G_1 - b) = 0$ (as the terminals of $G_1 - b$ are not connected), $(b, -) \not\in \dc1 \galt$.
  Hence $G_1 \not\in \Cc(\galt)$. It remains to show that $G_1 \in \Cc(\gap)$.

  Since $\theta(G_1) = 1$, we have that $\gp(G_1 - b) = g(G_1^+ - b) = g(G_1) < \gp(G_1)$ and thus $(b, -) \in \dc1 \gp$.
  By Lemma~\ref{lm-dumbbell-ep}, $\ep(G_1) = 0$. By (S2), $(b, -) \in \dc1 \gap$.

  Let $\mu \in \M(G_1) \sm \{(b,-), (b,/)\}$. Since $\mu G_1$ is connected, Lemma~\ref{lm-sides-mu} gives that $\mu \in \dc1 \gap$ if $xy \in E(G)$
  and $\mu \in \dc1 g \cup \dc1 \gap$ if $xy \not\in E(G)$ and $\eta(G_1, G_2) = 1$.
  By (S1), $\dc1 g \ss \dc1 \gp$. By (S2), $\dc1 \gp \ss \dc1 \gap$. We conclude that $\mu \in \dc1 \gap$.
  Since $\mu$ was arbitrary and $(b,-),(b,/) \in \dc1 \gap$, we have that $\M(G_1) = \dc1 \gap$ and $G_1 \in \Cc(\gap)$.
  Therefore, (i) holds.

  Assume now that $\theta(G_1) = 0$. We shall show that (ii) holds. In $G - b$, the two components of $G_1 - b$  are joined to $G_2$ by single vertices.
  If $xy \in E(G)$, Theorems~\ref{th-battle-ori} and~\ref{th-decker-ori} imply (using $\ep(G_1) = 0$ and $\theta(G_1) = 0$) that
  $$g(G - b) = g(\hat{G_1 - b}) + g(\hat{G_2^+})  = \g(G_1) + \gp(G_2)  = \gp(G_1) + \gp(G_2)  = \h_1(G) = g(G).$$ 
  This contradicts the assumption that $G_1$ is minor-tight. We conclude that $xy \not\in E(G)$.
  If $\eta(G_2) = 0$, we obtain a similar contradiction.
  $$g(G - b) = g(\hat{G_1 - b}) + g(\hat{G_2}) = \g(G_1) + g(G_2) = \gp(G_1) + \gp(G_2) = \h_1(G) = g(G).$$ 
  Thus $\eta(G_1, G_2) \ge 1$. Since $\theta(G_1) = 0$, we conclude that $\theta(G_2) = 1$ and $\eta(G_1, G_2) = 1$.

  It remains to show that $G_1 \in \D$, namely that $\mu \in \dc1 g$ for each $\mu \in \M(G) \sm \{(b, -), (b, /)\}$.
  Let $\mu \in \M(G) \sm \{(b, -), (b, /)\}$. Since $\mu G_1$ is connected, $\mu \in \dc1 g \cup \dc1 \gap$ by Lemma~\ref{lm-sides-mu}.
  Since $\mu G$ is still a dumbbell, $\ep(\mu G) = 0$ by Lemma~\ref{lm-dumbbell-ep}. Hence $\gp(\mu G) = \gap(\mu G)$
  and $\dc1 \gap \ss \dc1 \gp$. By (S2), $\dc1 \gp \ss \dc1 g$. Therefore, $\mu \in \dc1 g$.
  We conclude that $G_1 \in \D$. Thus (ii) holds.

  Let us prove the ``if'' part of the theorem. 
  Assume that $\ep(G_2) = 1$ and that (i) holds.
  Let $\mu \in \M(G_1)$. We have that $\mu \in \dc1 \gap$.
  If $\mu G_1$ is connected, $g(\mu G) < g(G)$ by Lemma~\ref{lm-sides-mu} since $\ep(G_2) = 1$.
  Otherwise, $\mu = (b, -)$.
  If $xy \in E(G)$, then by Theorems~\ref{th-battle-ori} and~\ref{th-decker-ori},
  $$g(G - b) = g(\hat{G_1 - b}) + g(\hat{G_2^+}) = g(G_1) + \gp(G_2) < \gp(G_1) + \gp(G_2) = \h_1(G) = g(G).$$
  If $xy \not\in E(G)$ and $\eta(G_1, G_2) = 1$, then $\theta(G_2) = 0$ and we obtain that
  $$g(G - b) = g(\hat{G_1 - b}) + g(\hat{G_2}) = g(G_1) + g(G_2) < g(G_1) + g(G_2) + 1 = g(G).$$
  In both cases $g(G - b) < g(G)$ and thus $g(\mu G) < g(G)$ for each $\mu \in \M(G_1)$. 
  We conclude that $G_1$ is minor-tight in $G$.

  Assume now that (ii) holds.
  Let $\mu \in \M(G_1)$ and assume first that $\mu G_1$ is connected.
  If $\mu = (b, /)$, then $(b, /) \in \dc1 \gap$ since $\ep(G_1) = 0$ and $\ep(\mu G_1) = 1$ (and $\gp(\mu G_1) = \gp(G_1)$).
  Otherwise $\mu \in \dc1 g$ since $G_1 \in \D$.
  Since $xy \not\in E(G)$, $\eta(G_1, G_2) = 1$, and $\ep(G_2) = 1$, Lemma~\ref{lm-sides-mu} gives that $g(\mu G) < g(G)$.

  The case when $\mu = (b, -)$ remains.
  By Theorems~\ref{th-battle-ori} and~\ref{th-decker-ori},
  $$g(G - b) = g(\hat{G_1 - b}) + g(\hat{G_2}) = g(G_1) + g(G_2) < g(G_1) + \g(G_2) +1 = g(G).$$
  We have that $g(\mu G) < g(G)$ for each $\mu \in \M(G_1)$.
  We conclude that $G_1$ is minor-tight.
\end{proof}

We close this section by showing that, in an obstruction of connectivity 2, there always exists a 2-vertex-cut
such that neither of the parts belongs to $\D$.

\begin{lemma}
  \label{lm-dumbbell-no}
  Let $G \in \Forb(\SS_k)$ be of connectivity 2. Then there exists a 2-vertex-cut $\{x, y\}$ such that
  neither of the parts of $G$ when viewed as an $xy$-sum of two graphs belongs to $\D$.
\end{lemma}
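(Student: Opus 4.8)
The plan is to argue by contradiction using the structure that dumbbells impose. Suppose that for every 2-vertex-cut $\{x,y\}$ of $G$, at least one of the two parts is a dumbbell belonging to $\D$. Recall that a member of $\D$ has a unique bar $b$ separating its terminals, satisfies $\theta = 0$, $\ep(\cdot/b) = 1$, and every minor-operation other than on the bar lies in $\dc1 g$. The idea is to locate a 2-cut for which \emph{both} parts fail to be dumbbells, contradicting our standing assumption for that particular cut. Since $G$ has connectivity exactly 2, fix a 2-cut $\{x,y\}$ realizing an $xy$-sum $G = G_1 \cup G_2$ (or $G_1 \cup G_2 + xy$), and by assumption, say $G_1 \in \D$. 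Then $G_1$ has a bar $b$, a cut-edge whose removal splits $G_1$ into components $H_1 \ni x$ and $H_2 \ni y$.

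First I would use the bar of $G_1$ to produce a \emph{better} 2-cut. Let $z$ and $w$ be the endpoints of the bar $b$, with $z \in V(H_1)$ and $w \in V(H_2)$. The key observation is that $\{z, w\}$ is itself a 2-vertex-cut of $G$: deleting $z$ and $w$ disconnects $H_1 - z$ from the rest (since $b$ was the only edge out of $H_1$ other than those at $x$), provided $H_1 - z$ is nonempty, i.e.\ $H_1 \ne \{x, z\}$ or similar degenerate shapes. I would handle the degenerate cases separately: if $b$ is incident with a terminal, say $z = x$, then the "bar" behaves like a pendant structure and one shows directly that the relevant side cannot be 2-connected-ly essential, or one picks $w$ together with $x$'s other neighbor. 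The generic case is that $b$ is internal, giving a clean new 2-cut $\{z,w\}$; consider the $zw$-sum decomposition $G = G_1' \cup G_2'$ where $G_1' = H_1 + $ (the $z$-side) and $G_2'$ absorbs $H_2$ and all of $G_2$. By the assumption applied to $\{z,w\}$, one of $G_1', G_2'$ lies in $\D$.

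The crux is then to show this process must terminate or contradicts minor-tightness. Here I would invoke Lemma~\ref{lm-dumbbell} together with Lemma~\ref{lm-dumbbell-unique}: if $G_1 \in \D$ is minor-tight as a side, then $\ep(G_2) = 1$ and, crucially, $G_1$ has a \emph{unique} bar. So when we pass to the cut $\{z,w\}$, the new side $G_1'$ cannot also be a dumbbell with a bar in the "old" direction toward $y$ --- any such bar would, after contraction, reintroduce a second bar in $G_1$ or in one of the pieces, contradicting uniqueness via Lemma~\ref{lm-dumbbell-ep}'s computation $\ep(G_1/b) = 0$ when another bar is present. Concretely, I expect to show that the side of the $\{z,w\}$-sum containing $H_1$ is \emph{not} a dumbbell (it has no cut-edge separating $z$ from $w$, since $H_1 \cup H_2 + b$ with $\{z,w\}$ as terminals has $b$ as a non-cut-edge once we also see the $G_2$-part attached at $x,y$; being $2$-connected through $G_2$ kills the cut-edge). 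Hence the \emph{other} side, which contains $G_2$ and $H_2$, must be in $\D$ --- but that side contains the original terminal structure, and iterating this "pushes" the dumbbell obstruction around the graph; since $G$ is finite and each step either strictly shrinks the dumbbell side (by absorbing $H_2 + b$ into the big side) or produces a side with no valid bar, we reach a 2-cut with neither part in $\D$.

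The main obstacle, I expect, will be the bookkeeping in the degenerate configurations --- when the bar is incident to a terminal, when $H_1$ or $H_2$ is a single vertex, or when the "big side" $G_2$ is itself small --- and making rigorous the claim that passing to the endpoints of the bar yields a genuinely new 2-cut whose $H_1$-side is not a dumbbell. The cleanest route is probably: among all 2-cuts $\{x,y\}$ for which some side is in $\D$, choose one minimizing the number of vertices on the dumbbell side; then show the bar-endpoint cut $\{z,w\}$ has a strictly smaller dumbbell side on the $H_1$-end (using that $\D$-membership forces $\theta = 0$ and the minor-tightness conditions of Lemma~\ref{lm-dumbbell}(ii), in particular $\eta(G_1,G_2) = 1$ and $xy \notin E(G)$), contradicting minimality unless no side of $\{z,w\}$ is in $\D$ at all --- which is exactly the desired conclusion. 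All the genus arithmetic needed is already packaged in Lemmas~\ref{lm-dumbbell-ep}, \ref{lm-dumbbell-unique}, and~\ref{lm-dumbbell}, so the proof should be short modulo this extremal-choice setup.
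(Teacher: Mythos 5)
Your key step fails: the pair $\{z,w\}$ of bar endpoints is generically \emph{not} a 2-vertex-cut of $G$. After deleting $z$ and $w$, the set $H_1-z$ still contains $x$, and $x$ is attached to $G_2$; likewise $H_2-w$ contains $y$, attached to $G_2$. So $(H_1-z)\cup G_2\cup(H_2-w)$ remains one connected piece, and nothing is separated. Your parenthetical ``$b$ was the only edge out of $H_1$ other than those at $x$'' is exactly what gives the connection back through $x$, so the claimed disconnection does not occur. The right move is to stay at the terminal on the side with an edge and take only \emph{one} endpoint of the bar: if $x\in V(H_1)$ and $z$ is the bar endpoint inside $H_1$, then $\{x,z\}$ \emph{is} a 2-cut (assuming $H_1$ has an edge, which one may arrange by symmetry), splitting $G$ into $H_1$ and $G_2'=G_2+H_2+b$.

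Once the cut is corrected, no iteration or extremal choice is needed---a single replacement suffices, which is the paper's argument. One shows $H_1\notin\D$ because, by Lemma~\ref{lm-dumbbell-unique}, $b$ was the \emph{unique} cut-edge of $G_1$ separating $x$ from $y$, so $H_1$ has no cut-edge separating $x$ from $z$, hence is not even a dumbbell. One shows $G_2'\notin\D$ by computing $\theta(G_2')=1$ (membership in $\D$ requires $\theta=0$): here you use $\theta(G_2)=1$, which follows from $\eta(G_1,G_2)=1$ and $\ep(G_1)=0,\ \theta(G_1)=0$ (Lemma~\ref{lm-dumbbell}(ii)), together with Theorems~\ref{th-battle-ori} and~\ref{th-decker-ori} applied to $G_2'$ and $G_2'^+$ viewed as a sum of $G_2$ and the small dumbbell $H_2+b+zx$. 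You correctly identify the relevant lemmas, but the wrong choice of cut makes the actual disconnection claim false and sends the rest of the argument down a dead end, while the extremal wrapper you propose is unnecessary once the cut is chosen correctly.
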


\begin{proof}
  \begin{figure}
    \centering
    \includegraphics{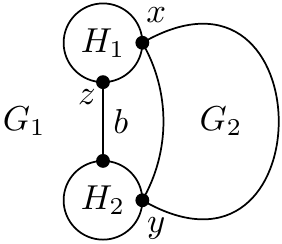}
    \caption{A sketch of the structure of the graph $G$ from the proof of Lemma~\ref{lm-dumbbell-no}.}
    \label{fg-dumbbell-sketch}
  \end{figure}
  Let $G$ be an $xy$-sum of $G_1$ and $G_2$. Suppose that $G_1 \in \D$.
  Since $G_1$ is minor-tight in $G$ and $\theta(G_1) = 0$, Lemma~\ref{lm-dumbbell} gives that $\ep(G_2) = 1$ and $\eta(G_1, G_2) = 1$.
  From the definition of $\eta(G_1, G_2)$ we conclude that $\theta(G_2) = 1$ as $\ep(G_1) = 0$.
  Let $b$ be a bar of $G_1$ and let $H_1$ and $H_2$ be the components of $G_1 - b$.
  We may assume that $H_1$ contains at least one edge. Let $x$ be the common vertex of $H_1$ and $G_2$ and let $z$
  be the endpoint of $b$ incident with $H_1$. Let us view $G$ as an $xz$-sum of $H_1$ and $G_2' = G_2 + H_2 + b$ (see Fig.~\ref{fg-dumbbell-sketch}).
  We claim that neither $H_1$ nor $G_2'$ belongs to $\D$.

  By Lemma~\ref{lm-dumbbell-unique} applied to $G_1$, $b$ is the unique cut-edge separating $x$ and $y$ and thus
  there is no cut-edge in $H_1$ separating $x$ and $z$. Therefore, $H_1$ is not a dumbbell.
  We shall show that $\theta(G_2') = 1$ and hence $G_2' \not\in \D$.
  The graph $G_2'^+$ can be viewed as an $xy$-sum of $G_2$ and the graph $G_1' = H_2 + b + zx$.
  The graph $G_1'$ is a dumbbell and thus $\ep(G_1') = 0$ by Lemma~\ref{lm-dumbbell-ep}.
  By Theorem~\ref{th-battle-ori},
  $g(G_2') = g(H_2) + g(G_2)$.
  By Theorem~\ref{th-decker-ori}, using $\ep(G_1) = 0$ and $\theta(G_2) = 1$, 
  $$g(G_2'^+) = \min\{g(G_1') + g(G_2) + 1, g(G_1'^+) + g(G_2^+)\} \ge g(H_2) + g(G_2) + 1.$$
  Therefore $\theta(G_2') = 1$. We conclude that $G_2' \not\in \D$.
\end{proof}


\section{General orientable surface}

In this section, we prove a general theorem that classifies minor-tight parts of a 2-sum of graphs.
The classification that is given in Theorem~\ref{th-general} below is also summarized in Table~\ref{tb-side-classes}.

\begin{table}
  \centering
  \begin{tabular}{|c | c | c | c|}
    \hline
    $xy \in E(G)$ & $\ep(G_2)$ & $\eta(G_1, G_2)$ & $G_1$ belongs to \\
    \hline
    \multirow{2}{*}{yes}  & 0 & \multirow{2}{*}{---} & $\Cc(\gp)$ \\
    & 1 & & $\Cc(\gap)$ \\
    \hline
    \multirow{7}{*}{no} & \multirow{3}{*}{0} &  0 & $\Cc(\gp)$ \\
     & &  1 & $\Cc(g)$ or $\Cc(\gp)$ \\
     & &  2 & $\Cc(g)$ \\
    \cline{2-4}
    & \multirow{4}{*}{1} & -1 & $\Cc(\gap)$ \\
    & & 0 & $\Cc(\gap)$ or $\H^1$ \\
    & & 1 & $\Cc(g)$, $\Cc(\galt)$, $\Cc(\gap)$, or $\D$ \\
    & & 2 & $\Cc(\g)$ or $\H^0$ \\
    \hline
  \end{tabular}
  \caption{Classification of minor-tight parts of 2-sums of graphs.}
  \label{tb-side-classes}
\end{table}

\begin{theorem}
  \label{th-general}
  Let $G$ be an $xy$-sum of connected graphs $G_1$ and $G_2$. The graph $G_1$ is minor-tight if and only if
  the following statements hold (see Table~\ref{tb-side-classes}).
  \begin{enumerate}[label=\rm(\roman*)]
  \item 
    If $xy \in E(G)$, then $G_1 \in \Cc(\gp)$ if $\ep(G_2) = 0$ and $G_1 \in \Cc(\gap)$ otherwise.  
  \item
    If $xy \not\in E(G)$ and $\eta(G_1, G_2) = -1$, then $G_1 \in \Cc(\gap)$.  
  \item
    If $xy \not\in E(G)$ and $\eta(G_1, G_2) = 0$, then $G_1 \in \Cc(\gp)$ if $\ep(G_2) = 0$ and $G_1 \in \Cc(\gap) \cup \H^1$ otherwise.  
  \item
    If $xy \not\in E(G)$ and $\eta(G_1, G_2) = 1$, then $G_1 \in \Cc(g) \cup \Cc(\gp)$ if $\ep(G_2) = 0$ and $G_1 \in \Cc(g) \cup \Cc(\galt) \cup \Cc(\gap) \cup \D$ otherwise.
  \item
    If $xy \not\in E(G)$ and $\eta(G_1, G_2) = 2$, then $G_1 \in \Cc(g)$ if $\ep(G_2) = 0$ and $G_1 \in \Cc(\g) \cup \H^0$ otherwise.
  \end{enumerate}
\end{theorem}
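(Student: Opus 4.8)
The plan is to derive Theorem~\ref{th-general} by combining the minor-operation classification of Lemma~\ref{lm-sides-mu} (equivalently Table~\ref{tb-side-options}) with the structural results of the previous two sections about which graphs realize each pattern of decreasing operations. The key reduction is this: $G_1$ is minor-tight in $G$ if and only if every $\mu \in \M(G_1)$ causes $g(\mu G) < g(G)$. For those $\mu$ with $\mu G_1$ connected, Lemma~\ref{lm-sides-mu} tells us exactly which $\dc{k}{\cdot}$-membership is forced, case by case on $xy \in E(G)$, $\ep(G_2)$, and $\eta(G_1,G_2)$. For the at-most-one remaining $\mu$ — a deletion of a cut-edge separating $x$ and $y$, i.e. the bar of a dumbbell — Lemma~\ref{lm-dumbbell} handles the analysis. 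So the proof is a walk through the eight rows of Table~\ref{tb-side-classes}, in each row first assuming $G_1$ is not a dumbbell (so \emph{all} of $\M(G_1)$ is governed by Lemma~\ref{lm-sides-mu}) and then separately folding in the dumbbell case via Lemma~\ref{lm-dumbbell}.

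For the non-dumbbell rows, the work is to translate ``$\M(G_1) \subseteq \dc{*}{*}$'' into ``$\M(G_1) = \dc{1}{\P}$ for the right $\P$'', using the $1$-separation bookkeeping (S1)--(S3) and the auxiliary Lemmas~\ref{lm-theta} and~\ref{lm-g-or-gap}. Concretely: row $xy\in E(G)$, $\ep(G_2)=0$ gives $\M(G_1)=\dc1\gp$, which is by definition $G_1\in\Cc(\gp)$ — and one must also check the reverse direction, that $G_1\in\Cc(\gp)$ makes every $\mu$ work, which is exactly the ``if'' half of Lemma~\ref{lm-sides-mu}(i). Row $\ep(G_2)=1$ likewise gives $\Cc(\gap)$. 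The $\eta=1$, $\ep(G_2)=0$ row gives $\M(G_1)=\dc1 g\cup\dc1\gp$, and Lemma~\ref{lm-theta} splits this into $\Cc(g)$ or $\Cc(\gp)$ according to $\theta(G_1)$. The $\eta=1$, $\ep(G_2)=1$ row gives $\M(G_1)=\dc1 g\cup\dc1\gap$; Lemma~\ref{lm-g-or-gap} puts $G_1$ into $\Cc(g)$, $\Cc(\galt)$, or $\Cc(\gap)$, and the fourth option $\D$ enters precisely when $G_1$ \emph{is} a dumbbell, where Lemma~\ref{lm-dumbbell}(ii) applies. The two genuinely new entries are the hoppers: in the $\eta=0$, $\ep(G_2)=1$ row, $\M(G_1)=\dc2 g\cup\dc1\gap$, which is either $\Cc(\gap)$ (when $G_1\not\in\Cc(\gap)$ is forbidden, i.e. $\dc2 g\subseteq\dc1\gap$ already) or, by definition, $\H^1$; and symmetrically the $\eta=2$, $\ep(G_2)=1$ row gives $\M(G_1)=\dc1 g\cup\dc2\gap$, which is $\Cc(g)$ or $\H^0$. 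These are essentially definitional once Table~\ref{tb-side-options} is in hand, but I must be careful that the ``$G_1\notin\Cc(\gap)$'' (resp. ``$G_1\notin\Cc(g)$'') clause in the definition of $\H^1$ (resp. $\H^0$) is exactly what separates the two sub-cases, so the disjunction is genuine and exhaustive.

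The dumbbell rows require merging Lemma~\ref{lm-dumbbell} with the above. Lemma~\ref{lm-dumbbell-ep} already tells us a dumbbell has $\ep(G_1)=0$, so $\theta(G_1)=\theta^+(G_1)$-style constraints pin down where it can live; Lemma~\ref{lm-dumbbell} says a minor-tight dumbbell forces $\ep(G_2)=1$ and lands in either $\Cc(\gap)\setminus\Cc(\galt)$ with $\theta(G_1)=1$ (covered already by the $\Cc(\gap)$ entries in rows $xy\in E(G)$ and $\eta=1$), or in $\D$ with $xy\notin E(G)$ and $\eta=1$ (the fourth entry of that row). In particular no dumbbell is minor-tight when $\ep(G_2)=0$, when $\eta=-1$, when $\eta=0$, or when $\eta=2$, so those rows are unaffected; this needs a short verification against Lemma~\ref{lm-dumbbell}'s hypotheses. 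For the converse (``if'') direction in each row, I invoke the ``if'' half of Lemma~\ref{lm-sides-mu} for the connected minor-operations and the ``if'' half of Lemma~\ref{lm-dumbbell} for the bar.

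The main obstacle I anticipate is not any single deep step but the combinatorial bookkeeping: ensuring that in each of the eight rows the disjunction of classes on the right is \emph{exactly} the set of minor-tight $G_1$ — neither too large (some $G$ in the stated union might fail minor-tightness because of the bar-deletion operation, which Lemma~\ref{lm-sides-mu} does not see) nor too small (a dumbbell in $\D$ might be missed if one only quotes Lemma~\ref{lm-sides-mu}). The cleanest way to keep this honest is to fix, at the start, the dichotomy ``$G_1$ is a dumbbell / $G_1$ is not a dumbbell'' and handle the two branches in parallel for every row, citing Lemma~\ref{lm-dumbbell} in the former and Lemma~\ref{lm-sides-mu} together with Lemmas~\ref{lm-theta} and~\ref{lm-g-or-gap} in the latter, and finally observing that the ``$G_1\notin\Cc(\gap)$''/``$G_1\notin\Cc(g)$'' side-conditions in the definitions of $\H^1$ and $\H^0$ exactly absorb the boundary cases.
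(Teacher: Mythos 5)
Your proposal follows essentially the same approach as the paper: fix the dichotomy on whether $G_1$ has a cut-edge separating $x$ and $y$, handle the non-dumbbell case by translating Lemma~\ref{lm-sides-mu}'s $\dc{k}{\cdot}$-memberships into class-memberships via Lemmas~\ref{lm-theta} and~\ref{lm-g-or-gap} together with the definitions of $\H^0$ and $\H^1$, and fold in dumbbells via Lemma~\ref{lm-dumbbell}. The only ingredient you leave slightly implicit is that the ``if'' direction for $G_1\in\Cc(\galt)$ needs Corollary~\ref{cr-galt} to turn $\M(G_1)=\dc1\galt$ into $\M(G_1)\subseteq\dc1 g\cup\dc1\gap$ before Lemma~\ref{lm-sides-mu}(iv) can be quoted, but this is a minor bookkeeping point and the overall plan is sound and matches the paper.
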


\begin{proof}
  Let us start with the ``only if'' part of the theorem.
  Assume first that $G_1$ has no cut-edge that separates $x$ and $y$.
  Lemma~\ref{lm-sides-mu} classifies which graph parameters of $G_1$ are decreased by the minor-operations in $\M(G_1)$.
  If it is a single parameter, $G_1$ belongs to the critical class corresponding to the parameter.
  For example, if $xy \in E(G)$ and $\ep(G_2) = 0$, then $\M(G_1) = \dc1 \gp$ by Lemma~\ref{lm-sides-mu}(i) and thus $G_1 \in \Cc(\gp)$.
  The statements (i), (ii), (iii) for $\ep(G_2) = 0$, and (v) for $\ep(G_2) = 0$ are proven in this way and we omit the details.
  Let us focus on the remaining cases. In all of them, we have that $xy \not\in E(G)$.

  Let us start with the case when $\eta(G_1, G_2) = 1$.
  If $\ep(G_2) = 0$, then $\M(G_1) = \dc1 g \cup \dc1 \gp$ by Lemma~\ref{lm-sides-mu}(iv).
  By Lemma~\ref{lm-theta}, $G_1$ belongs to either $\Cc(g)$ or $\Cc(\gp)$.
  If $\ep(G_2) = 1$, then $\M(G_1) = \dc1 \g \cup \dc1 \gap$ by Lemma~\ref{lm-sides-mu}(iv).
  By Lemma~\ref{lm-g-or-gap}, $G_1$ belongs to either $\Cc(g), \Cc(\galt)$, or $\Cc(\gap)$. This proves (iv).

  If $\eta(G_1, G_2) = 0$ and $\ep(G_2) = 1$, then $\M(G_1) = \dc1 \gap \cup \dc2 \g$ by Lemma~\ref{lm-sides-mu}(iii).
  By definition, $G_1$ belongs to either $\Cc(\gap)$ or $\H^1$. Thus, (iii) holds.
  If $\eta(G_1, G_2) = 2$ and $\ep(G_2) = 1$, then $\M(G_1) = \dc1 \g \cup \dc2 \gap$ by Lemma~\ref{lm-sides-mu}(v).
  By definition, $G_1$ belongs to either $\Cc(\g)$ or $\H^0$. 
  Thus (v) is true.

  Assume now that $G_1$ has a cut-edge that separates $x$ and $y$ and thus $G_1$ is a dumbbell.
  Since $G_1$ is minor-tight, Lemma~\ref{lm-dumbbell} gives that $\ep(G_2) = 1$ and that 
  (1) either $G_1 \in \Cc(\gap)$ and $xy \in E(G)$ or $\eta(G_1, G_2) = 1$, or 
  (2) $G_1 \in \D$, $xy \not\in E(G)$, and $\eta(G_1, G_2) = 1$. 
  The statements (ii), (iii), and (v) are vacuously true since either $xy \in E(G)$ or $\eta(G_1, G_2) = 1$.
  The statement (i) is true since $G_1 \in \Cc(\gap)$ if $xy \in E(G)$.
  The statement (iv) is true since either $G_1 \in \Cc(\gap)$ or $G_1 \in \D$.
  This completes the ``only if'' part of the proof.

  It remains to prove the ``if'' part. Lemma~\ref{lm-sides-mu} is now used to prove that $G_1$ is minor-tight.
  Assume first that $G_1$ has no cut-edge separating $x$ and $y$.
  If $G_1$ belongs to one of the classes $\Cc(g)$, $\Cc(\gp)$, or $\Cc(\gap)$, then it is straightforward to check that in each case
  Lemma~\ref{lm-sides-mu} asserts that $G_1$ is minor-tight. We shall omit the proof here and do only the cases when $G_1 \in \Cc(\galt)$ or $G_1$ is a hopper.

  If $G_1 \in \Cc(\galt)$, $xy \not\in E(G)$, $\ep(G_2) = 1$, and $\eta(G_1, G_2) = 1$, then Corollary~\ref{cr-galt} asserts that $\M(G_1) = \dc1 g \cup \dc1 \gap$.
  Lemma~\ref{lm-sides-mu} gives that $G_1$ is minor-tight.
  Finally, let us assume that $G_1$ is a hopper. 
  If $G_1 \in \H^1$, $xy \not\in E(G)$, $\ep(G_2) = 1$, and $\eta(G_1, G_2) = 0$, then $\M(G_1) = \dc2 g \cup \dc1 \gap$ by definition of $\H^1$.
  Lemma~\ref{lm-sides-mu} gives that $G_1$ is minor-tight.
  If $G_1 \in \H^0$, $xy \not\in E(G)$, $\ep(G_2) = 1$, and $\eta(G_1, G_2) = 2$, then $\M(G_1) = \dc1 g \cup \dc2 \gap$ by definition of $\H^0$.
  Lemma~\ref{lm-sides-mu} gives that $G_1$ is minor-tight.

  Assume now that $G_1$ is a dumbbell with bar $b$.
  If $G_1 \in \D$ (and $\ep(G_2) = 1$, $xy \not\in E(G)$, and $\eta(G_1, G_2) = 1$), then $G_1$ is minor-tight by Lemma~\ref{lm-dumbbell}(ii).
  By Lemma~\ref{lm-dumbbell-ep}, $G_1 \not\in \Cc(g) \cup \Cc(\gp)$.
  Since $\H^0$ and $\H^1$ are subsets of $\Cc(\gp)$, we have that $G_1 \not\in \H^0 \cup \H^1$ (Lemmas~\ref{lm-hopper-0} and~\ref{lm-hopper-1}).
  Thus we may assume that $G_1 \in \Cc(\galt) \cup \Cc(\gap)$ and $\ep(G_2) = 1$.
  By Lemma~\ref{lm-dumbbell-ep}, $\ep(G_1) = 0$. 
  Since $\e(G_1 - b) = \ep(G_1 - b) = 0$ and $g(G_1 - b) = g(G)$, we conclude that $G_1 \not\in \Cc(\galt)$.
  Hence $G_1 \in \Cc(\gap) \sm \Cc(\galt)$ and $(b, -) \in \dc1 \gap$.
  Since $\ep(G_1 - b) = \ep(G_1) = 0$, $(b, -) \in \dc1 \gp$.
  By (S2), $\theta(G_1) = 1$. Since $\ep(G_1) = 0$ and $\eta(G_1, G_2) \le 1$, we conclude that $\eta(G_1, G_2) = 1$.
  By Lemma~\ref{lm-dumbbell}(i), $G_1$ is minor-tight in $G$.
  This completes the proof of the theorem.
\end{proof}

Note that a graph can belong to several critical classes at the same time. 
For example, if $G \in \Cc(g)$ such that $\theta(G) = 1$ and $\ep(G) = 0$, then $G$ belongs to all four classes, $\Cc(g)$, $\Cc(\gp)$, $\Cc(\galt)$, and $\Cc(\gap)$.

We finish this section by the following corollary which shows that at least one side of a 2-sum is an ``obstruction'' for a surface.

\begin{corollary}
  \label{cr-necessary-obstr}
  Let $G$ be an $xy$-sum of connected graphs $G_1$ and $G_2$.
  If both, $G_1$ and $G_2$, are minor-tight, then the following statements hold:
  \begin{enumerate}[label=\rm(\roman*)]
  \item
    $G_1$ and $G_2$ belong to $\Cc(g) \cup \Cc(\gp) \cup \Cc(\galt) \cup \Cc(\gap) \cup \D$.
  \item
    If $\ep(G_2) = 0$, then $G_1 \in \Cc(g) \cup \Cc(\gp)$.
  \item
    Either $G_1$ or $G_2$ belongs to $\Cc(g) \cup \Cc(\gp)$.
  \end{enumerate}
\end{corollary}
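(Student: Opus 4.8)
The plan is to derive all three statements directly from Theorem~\ref{th-general}, which already enumerates the possibilities for each minor-tight part. For part~(i), I would argue that in every row of Table~\ref{tb-side-classes} the side $G_1$ lands in one of $\Cc(g)$, $\Cc(\gp)$, $\Cc(\galt)$, $\Cc(\gap)$, or $\D$. Indeed, $\H^0$ and $\H^1$ are by definition subfamilies of $\Cc(\gp)$ (see Lemmas~\ref{lm-hopper-0} and~\ref{lm-hopper-1}), so any hopper already lies in $\Cc(\gp)$; after this observation, reading down the ``$G_1$ belongs to'' column of the table gives exactly the union claimed. Since the same reasoning applies symmetrically to $G_2$ (swap the roles of $G_1$ and $G_2$ in Theorem~\ref{th-general}), the statement follows.

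For part~(ii), I would simply invoke Theorem~\ref{th-general} under the hypothesis $\ep(G_2)=0$. Scanning the rows of the table with $\ep(G_2)=0$: if $xy\in E(G)$ then $G_1\in\Cc(\gp)$; if $xy\notin E(G)$ and $\eta(G_1,G_2)=0$ then $G_1\in\Cc(\gp)$; if $\eta(G_1,G_2)=1$ then $G_1\in\Cc(g)\cup\Cc(\gp)$; and if $\eta(G_1,G_2)=2$ then $G_1\in\Cc(g)$. (The row $\eta(G_1,G_2)=-1$ forces $\ep(G_2)=1$, hence does not occur here.) In every case $G_1\in\Cc(g)\cup\Cc(\gp)$, which is exactly~(ii). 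One should also note that $\eta(G_1,G_2)\in\{-1,0,1,2\}$ and $\ep(G_2)\in\{0,1\}$, so these rows are genuinely exhaustive.

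For part~(iii), I would use~(ii) together with a parity-type argument on $\ep$. If $\ep(G_2)=0$, then~(ii) gives $G_1\in\Cc(g)\cup\Cc(\gp)$ and we are done. So assume $\ep(G_2)=1$; I want to conclude that $\ep(G_1)=0$, which by~(ii) applied with the roles of $G_1$ and $G_2$ reversed yields $G_2\in\Cc(g)\cup\Cc(\gp)$. To see $\ep(G_1)=0$: since $G$ is an obstruction (hence not embeddable below its genus) and is 2-connected, the $xy$-sum is genuine; if both $\ep(G_1)=1$ and $\ep(G_2)=1$ then $\eta(G_1,G_2)=\theta(G_1)+\theta(G_2)-\ep(G_1)\ep(G_2)$, and by Lemma~\ref{lm-alt-equiv} $\ep(G_i)=1$ forces $\theta(G_i)=0$ only when $\e(G_i)=1$; in general $\ep(G_i)=1$ gives no constraint on $\theta(G_i)$, so I must be more careful here.

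The cleaner route for~(iii), and the one I would actually pursue, is to go back to Theorem~\ref{th-general} once more but examine the case $\ep(G_1)=\ep(G_2)=1$ directly. In that regime I would show that some side must already be in $\Cc(g)\cup\Cc(\gp)$ by examining which $\eta$-values are compatible with $\ep(G_1)=\ep(G_2)=1$ on \emph{both} sides simultaneously: applying part~(ii)'s table-scan to $G_1$ forces (in the $\ep(G_2)=1$ rows) $G_1\in\Cc(g)\cup\Cc(\galt)\cup\Cc(\gap)\cup\H^1\cup\H^0\cup\D$, and among these $\Cc(g)$, $\H^1$, $\H^0$ already lie in $\Cc(g)\cup\Cc(\gp)$; so the only way~(iii) could fail is if both $G_1$ and $G_2$ lie in $\Cc(\galt)\cup\Cc(\gap)\cup\D$ but in neither $\Cc(g)$ nor $\Cc(\gp)$. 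For $\Cc(\gap)\setminus\Cc(\gp)$ and $\D$ and $\Cc(\galt)\setminus(\Cc(g)\cup\Cc(\gp))$, Lemmas~\ref{lm-alt-edge}, \ref{lm-dumbbell-ep}, \ref{lm-galt-constraint} pin down $\ep=0$ on that side (a dumbbell has $\ep=0$ by Lemma~\ref{lm-dumbbell-ep}; a graph in $\Cc(\galt)$ not in $\Cc(g)$ has $\galt=\gap$, and if moreover not in $\Cc(\gp)$ then $\ep=0$ follows from the $1$-separation relations). This contradicts $\ep(G_2)=1$ (resp.\ $\ep(G_1)=1$), establishing~(iii). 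The main obstacle, and the part I expect to require the most care, is precisely this last bookkeeping: tracking exactly when membership in the ``$\galt$/$\gap$/$\D$'' classes is still compatible with $\ep=1$, using the $1$-separation facts (S1)--(S3) and Lemmas~\ref{lm-galt-constraint} and~\ref{lm-alt-edge} to rule out the bad overlap.
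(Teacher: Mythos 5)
Your treatment of parts (i) and (ii) coincides with the paper's proof: observe that $\H^0,\H^1 \ss \Cc(\gp)$ via Lemmas~\ref{lm-hopper-0} and~\ref{lm-hopper-1}, then read the conclusion off Theorem~\ref{th-general}. Part (iii) is where you diverge, and the key intermediate step you state is incorrect. You claim that a graph in $\Cc(\galt)\sm\Cc(g)$ satisfies $\galt=\gap$. What actually follows is $\galt(G)=g(G)$, i.e.\ $\e(G)=0$: if $\e(G)=1$, then by (S1) we have $\dc1\galt\ss\dc1 g$, and since $\M(G)=\dc1\galt$ this would give $\M(G)=\dc1 g$, so $G\in\Cc(g)$, a contradiction. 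But $\e(G)=0$ does \emph{not} force $\galt(G)=\gap(G)$; by Lemma~\ref{lm-alt-edge}, these two differ exactly when $\ep(G)=0$ and $\theta(G)=1$, a case you have not excluded. Your target conclusion $\ep(G)=0$ is still true and can be reached from the correct intermediate fact: assume $\ep(G)=1$; with $\e(G)=0$, Lemma~\ref{lm-alt-equiv} forces $\theta(G)=1$, so both (S1)-inclusions $\dc1 g\ss\dc1\gp$ and $\dc1\gap\ss\dc1\gp$ hold, and $\M(G)=\dc1\galt\ss\dc1 g\cup\dc1\gap\ss\dc1\gp$ by Corollary~\ref{cr-galt}, giving $G\in\Cc(\gp)$, a contradiction.

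For comparison, the paper's proof of (iii) bypasses the three-way case analysis entirely. If $G_2\not\in\Cc(g)\cup\Cc(\gp)$ and $G_2$ is not a dumbbell, then Lemma~\ref{lm-theta} supplies $\mu\in\M(G_2)\sm(\dc1 g\cup\dc1\gp)$; minor-tightness and Corollary~\ref{cr-sides} put $\mu\in\dc1\gap$; since $\mu\not\in\dc1\gp$, (S1) immediately yields $\ep(G_2)=0$, and (ii) finishes. You should fix the stated error and tighten the case argument, or, more economically, adopt the paper's direct route, which never needs to know which of the critical classes $G_2$ lives in.
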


\begin{proof}
  By Lemma~\ref{lm-hopper-0} and~\ref{lm-hopper-1}, $\H^0$ and $\H^1$ are subfamilies of $\Cc(\gp)$.
  Thus (i) and (ii) follow from Theorem~\ref{th-general} as it covers all possible combinations of the parameters describing $G$.
  We shall now prove (iii).
  Assume that $G_2$ does not belong to $\Cc(g) \cup \Cc(\gp)$.
  If $G_2$ is a dumbbell, then Lemma~\ref{lm-dumbbell-ep} gives that $\ep(G_2) = 0$ and thus $G_1 \in \Cc(g) \cup \Cc(\gp)$ by (ii).
  Thus we may assume that $\mu G_2$ is connected for each $\mu \in \M(G_2)$.
  Lemma~\ref{lm-theta} applied to $G_2$ gives that there exists a minor-operation $\mu \in \M(G_2)$ such that $\mu \not\in \dc1 g \cup \dc1 \gp$.
  By Corollary~\ref{cr-sides}, $\mu \in \dc1 \gap$.
  Since $\mu \not\in \dc1 \gp$, we have that $\ep(G_2) = 0$ by (S1).
  Therefore, (ii) gives that $G_1 \in \Cc(g) \cup \Cc(\gp)$.
\end{proof}


\section{Torus}
\label{sc-torus}

\begin{figure}
  \centering
  \includegraphics{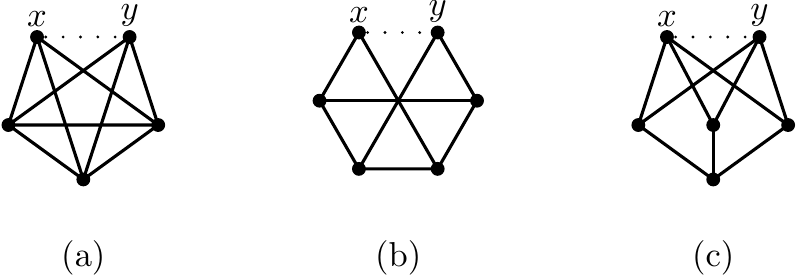}
  \caption{The family $\Cc_0(\gp)$, the third graph is the sole member of the family $\Cc_0(\g)$.}
  \label{fg-c0-gp}
\end{figure}
In this section, we characterize obstructions for embedding graphs into the torus of connectivity 2.
We first show that the classes $\Cc_0(g)$ and $\Cc_0(\gp)$ are related to Kuratowski graphs $K_5$ and $K_{3,3}$.

\begin{lemma}
\label{lm-plane-obstr}
  The class $\Cc_0(g)$ consists of a single graph, $K_{3,3}$ with non-adjacent terminals (Fig.~\ref{fg-c0-gp}(c)).
  The class $\Cc_0(\gp)$ consists of the three graphs shown in Fig.~\ref{fg-c0-gp}.
\end{lemma}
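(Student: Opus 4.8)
The plan is to reduce both claims to the planar case of Kuratowski's theorem, in the form $\Forb(\SS_0)=\Forb^*(\SS_0)=\{K_5,K_{3,3}\}$. (The equality $\Forb^*(\SS_0)=\{K_5,K_{3,3}\}$ is the standard remark that an edge-minimal non-planar graph of minimum degree at least $3$ contains a Kuratowski subdivision which, by edge-minimality, uses all of its edges; hence it has no subdivided edge and equals $K_5$ or $K_{3,3}$.) These facts are then combined with the correspondences recorded just before Lemma~\ref{lm-gp} and with Lemma~\ref{lm-gp} itself.

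I would first settle $\Cc_0(\g)$. By the remark preceding Lemma~\ref{lm-gp}, a graph lies in $\Cc_0(\g)$ precisely when its underlying graph belongs to $\Forb(\SS_0)$ and its two terminals are non-adjacent there. Since $K_5$ is complete it has no non-adjacent pair, so the underlying graph is $K_{3,3}$; and since $\Aut(K_{3,3})$ is transitive on non-adjacent pairs, this yields a unique graph $G_c$ in $\G_{xy}$ up to isomorphism, namely $K_{3,3}$ with non-adjacent terminals (Fig.~\ref{fg-c0-gp}(c)). I would also record that $\theta(G_c)=0$: in the hexagonal embedding of $K_{3,3}$ in the torus every face is incident with both terminals, so $\gp(G_c)=g(K_{3,3})=1$.

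For $\Cc_0(\gp)$, take $G\in\Cc_0(\gp)$ and split on $\theta(G)$ using Lemma~\ref{lm-gp}. If $\theta(G)=0$, then by the argument in the proof of Lemma~\ref{lm-gp} we get $G\in\Cc_0(\g)$, so $G=G_c$. If $\theta(G)=1$, then $g(G)=0$ and Lemma~\ref{lm-gp} forces $\hat{G^+}\in\Forb(\SS_0)\cup\Forb^*(\SS_0)=\{K_5,K_{3,3}\}$; since $xy\notin E(G)$, this says $G=\hat{G^+}-xy$ with $x,y$ the ends of the deleted edge, and edge-transitivity of $K_5$ and of $K_{3,3}$ leaves precisely $G_a=K_5-e$ and $G_b=K_{3,3}-e$ with terminals at the ends of $e$ (Fig.~\ref{fg-c0-gp}(a),(b)); the second alternative of Lemma~\ref{lm-gp} adds no new graph and in fact cannot occur, as $\hat{G_a/xy}=K_4$ and $\hat{G_b/xy}$ (a five-vertex wheel) are planar. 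Hence $\Cc_0(\gp)\subseteq\{G_a,G_b,G_c\}$, and these three are pairwise non-isomorphic, being distinguished by their numbers of vertices and edges.

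For the reverse inclusion I would verify $G_a$, $G_b$, $G_c$ one by one. In each case $\gp$ equals the genus of $K_5$, of $K_{3,3}$, or of $K_{3,3}$ with an added part-edge, namely $1$, so it remains to show that $\mu G+xy$ is planar for every $\mu\in\M(G)$. For $G_a$ and $G_b$, writing $\hat G=H-xy$ with $H\in\{K_5,K_{3,3}\}$: if $\mu=(f,-)$ then $(\mu G)^+=H-f$, and if $\mu=(f,/)$ (so $f\neq xy$) a short check gives $(\mu G)^+\cong H/f$; either way $(\mu G)^+$ is a proper minor of $H$ and hence planar. For $G_c$ one runs the finite case analysis over $\mu\in\M(K_{3,3})$: deleting an edge yields a six-vertex nine-edge non-bipartite graph containing no $K_5$- or $K_{3,3}$-subdivision, while contracting an edge collapses $K_{3,3}$ to the five-vertex wheel, in which the terminal edge is either already present or is the chord joining the two opposite rim vertices (yielding $K_5$ minus an edge); so $\mu G_c+xy$ is planar in every case. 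Thus $\Cc_0(\gp)=\{G_a,G_b,G_c\}$. The only real friction is this bookkeeping — in particular checking that adjoining the terminal edge to a minor of $H-xy$ never leaves the family of proper minors of $H$, together with the small case check for $G_c$; there is no conceptual obstacle, the content being Lemma~\ref{lm-gp} and Kuratowski's theorem plus a handful of explicit small graphs.
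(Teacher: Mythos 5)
Your proof is correct and follows essentially the same route as the paper: identify $\Cc_0(g)$ via the correspondence noted just before Lemma~\ref{lm-gp}, use Lemma~\ref{lm-gp} together with $\Forb^*(\SS_0)=\{K_5,K_{3,3}\}$ to pin down $\Cc_0(\gp)$, and then verify each of the three candidate graphs by a small explicit case check. You spell out a few steps the paper leaves implicit (in particular that the second alternative of Lemma~\ref{lm-gp} is vacuous here, and that $(\mu G)^+$ is a proper minor of $\hat{G^+}$ when the latter is a Kuratowski graph), but the argument is the same.
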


\begin{proof}
  The obstructions $\Forb(\SS_0)$ for the 2-sphere are $K_{3,3}$ and $K_5$. 
  As we observed in Sect.~\ref{sc-classes}, a graph $G$ belongs to $\Cc_0(g)$ if only if $\hat{G}$ is isomorphic to a graph in $\Forb(\SS_0)$ with the terminals non-adjacent.
  Since $xy \not\in E(G)$, $\hat{G}$ cannot be isomorphic to $K_5$, and
  there is a unique 2-labeled graph isomorphic to $K_{3,3}$ with two non-adjacent terminals. 

  Let us show first that each graph in Fig.~\ref{fg-c0-gp} belongs to $\Cc_0(\gp)$.
  If $\hat{G^+}$ is isomorphic to a Kuratowski graph, the lemma follows from the Kuratowski theorem.
  Otherwise $\hat{G}$ is isomorphic to $K_{3,3}$ with $x$ and $y$ non-adjacent.
  It suffices to show that $\mu G^+$ is planar for each minor-operation $\mu \in \M(G)$ as $G^+$ clearly embeds into the torus.
  Pick an arbitrary edge $e \in E(G)$.
  The graph $G^+ - e$ has 9 edges and is not isomorphic to $K_{3,3}$ as it contains a triangle.
  The graph $G^+ / e$ has only 5 vertices and (at most) 9 edges.
  Since $e$ was arbitrary, it follows that $\mu G^+$ is planar for every $\mu \in \M(G)$.
  We conclude that $G \in \Cc_0(\gp)$.

  We shall show now that there are no other graphs in $\Cc_0(\gp)$.
  Let $G \in \Cc_0(\gp)$. By Lemma~\ref{lm-gp}, there is a graph $H \in \Forb^*(\SS_0)$ such that either $\hat{G}$ is isomorphic to $H$
  or $G$ is isomorphic to the graph obtained from $H$ by deleting an edge and making the ends terminals.
  It is not hard to see that this yields precisely the graphs in Fig.~\ref{fg-c0-gp}.
\end{proof}

Note that the first two graphs in Fig.~\ref{fg-c0-gp} have $\theta$ equal to 1 and last one has $\theta$ equal to 0.
We summarize the properties of graphs in $\Cc_0(\gp)$ in the following lemma.

\begin{lemma}
  \label{lm-alt-kuratowski}
  For each graph $G \in \Cc_0(\gp)$, the graph $G^+$ is $xy$-alternating on the torus, $G /xy$ is planar, 
  and $\theta(G) = 1$ if and only if $G \not\in \Cc_0(g)$.
\end{lemma}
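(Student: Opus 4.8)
The plan is to start from Lemma~\ref{lm-plane-obstr}, which identifies $\Cc_0(\gp)$ with exactly the three graphs of Fig.~\ref{fg-c0-gp}, and then to verify the three assertions directly for each of them. First I would pin down these graphs explicitly: tracing through the proof of Lemma~\ref{lm-plane-obstr} (and noting that the $\Forb^*(\SS_0)$-branch of Lemma~\ref{lm-gp} cannot occur for $k=0$, since contracting an edge of $K_5$ or of $K_{3,3}$ gives $K_4$ or the $4$-wheel, neither of which lies in $\Forb(\SS_0)$), the three members are $K_{3,3}$ with the two terminals on a common side --- the unique member of $\Cc_0(g)$ --- together with $K_5-e$ and $K_{3,3}-e$, each taken with the former endpoints of the deleted edge $e$ as terminals. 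In all three cases $G^+$ is $K_5$, $K_{3,3}$, or $K_{3,3}$ plus an edge inside one side, so $\gp(G)=1$, as it must be for $G\in\Cc_0(\gp)$.

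For the statement about $\theta$: the proof of Lemma~\ref{lm-gp} already shows that $\theta(G)=0$ forces $G\in\Cc_0(g)$; conversely the member of $\Cc_0(g)$ is $K_{3,3}$ with non-adjacent terminals, which is $xy$-alternating (as observed in the paragraph after Corollary~\ref{cr-galt}) and hence has $\theta=0$ by Lemma~\ref{lm-alt-equiv}. Thus $\theta(G)=1$ precisely when $G\notin\Cc_0(g)$, i.e.\ for $G\cong K_5-e$ and $G\cong K_{3,3}-e$; for those two one can alternatively just observe that $G$ is planar while $G^+$ is not. For ``$G^+$ is $xy$-alternating on the torus'': in the $\Cc_0(g)$ case this is immediate from Lemma~\ref{lm-alt-equiv} applied to the remark above (it yields $\ep(G)=1$); for $G\cong K_5-e$ or $K_{3,3}-e$ we have $G^+\cong K_5$ or $K_{3,3}$, and I would invoke the $xy$-alternating toroidal embeddings of the Kuratowski graphs shown in Fig.~\ref{fg-kuratowski-alt}, using that $\Aut(K_5)$ is transitive on pairs of vertices and $\Aut(K_{3,3})$ is transitive on edges, so the alternating pair may be taken to be the endpoints of $e$. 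Hence $\ep(G)=\e(G^+)=1$ in every case.

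Finally, ``$G/xy$ planar'' is a one-line computation in each case: $G/xy=\hat{G^+}/e_{xy}$, where $e_{xy}$ is the copy of the edge $xy$ inside $G^+$, and this equals $K_{2,3}$ (collapsing two same-side vertices of $K_{3,3}$), $K_5/e=K_4$, or $K_{3,3}/e$ (the $4$-wheel), all planar. I expect the only genuinely delicate point to be the $xy$-alternating claim for $K_{3,3}$ when the two terminals lie on opposite sides (the adjacent case), since the earlier remark only records the non-adjacent pair; this can be settled either by reading the appropriate embedding off Fig.~\ref{fg-kuratowski-alt} or by exhibiting an explicit rotation system of $K_{3,3}$ with a facial walk on which two opposite-side vertices alternate.
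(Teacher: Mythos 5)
Your proof is correct and takes essentially the same approach as the paper: identify the three members of $\Cc_0(\gp)$ via Lemma~\ref{lm-plane-obstr}, read the $xy$-alternating toroidal embeddings off Fig.~\ref{fg-kuratowski-alt}, and verify planarity of $G/xy$ by a direct computation on graphs with at most $5$ vertices and $9$ edges. Your write-up is somewhat more careful than the paper's terse version --- you explicitly prove the $\theta$ equivalence (which the paper only notes in the remark preceding the lemma) and you correctly flag the adjacent-terminal $K_{3,3}$ case as the one place where the figure must be read with care, whereas the paper simply asserts that each Kuratowski graph is $xy$-alternating for every pair of vertices.
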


\begin{proof}
  By Lemma~\ref{lm-plane-obstr}, $\hat{G}$ or $\hat{G^+}$ is isomorphic to a Kuratowski graph.
  The $xy$-alternating embeddings of Kuratowski graphs are depicted in Fig.~\ref{fg-kuratowski-alt}.
  Since each Kuratowski graph $G$ is $xy$-alternating for each pair of vertices of $G$, 
  the graph $G^+$ is also $xy$-alternating for each pair of vertices of $G$ by Lemma~\ref{lm-alt-equiv}.
  For each Kuratowski graph $G$, the graph $G /xy$ has at most 5 vertices and at most 9 edges. 
  Thus $G/xy$ contains no Kuratowski graph as a minor and is therefore planar.
\end{proof}

\begin{figure}
  \centering
  \includegraphics{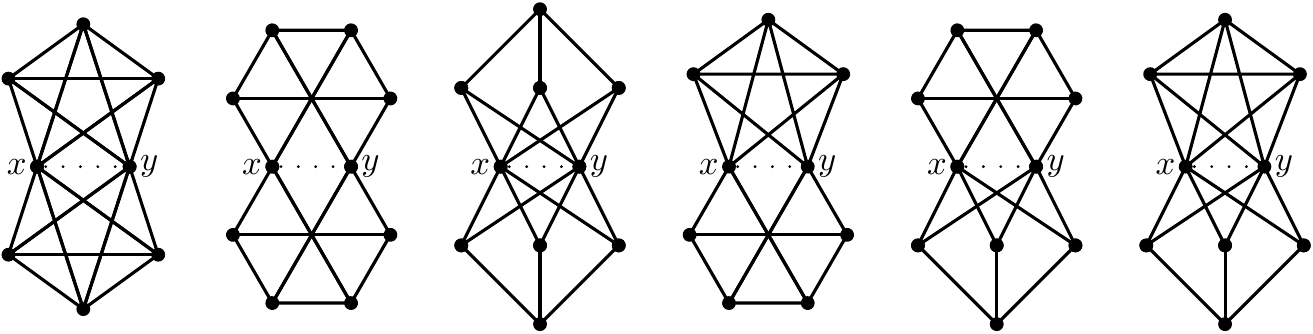}
  \caption{$\T_2$, the $xy$-sums of Kuratowski graphs which belong to $\Cc_0(\galt) \cap \Cc_0(\gap)$.}
  \label{fg-kuratowski-sum}
\end{figure}
Mohar and \v{S}koda~\cite{mohar-obstr} presented the complete list of graphs in $\C_0(\galt)$.
We describe them using six subclasses $\T_1, \ldots, \T_6$ of $\Gcxy$.
Let $\T_1$ be the class of graphs that contains each $G \in \Gcxy$ such that $\hat{G}$ is isomorphic to a Kuratowski graph plus one or two isolated vertices that are terminals in $G$,
$\T_2$ the class of graphs shown in Fig.~\ref{fg-kuratowski-sum},
$\T_3$ the class of graphs corresponding to the graphs in Fig.~\ref{fg-alt-1-con},
$\T_4$ the class of graphs corresponding to the graphs in Fig.~\ref{fg-alt-2-con}, 
$\T_5$ the class of graphs depicted in Fig.~\ref{fg-split}, and
$\T_6$ the class of graphs corresponding to the graphs in Fig.~\ref{fg-alt-xy}.
\begin{figure}
  \centering
  \includegraphics{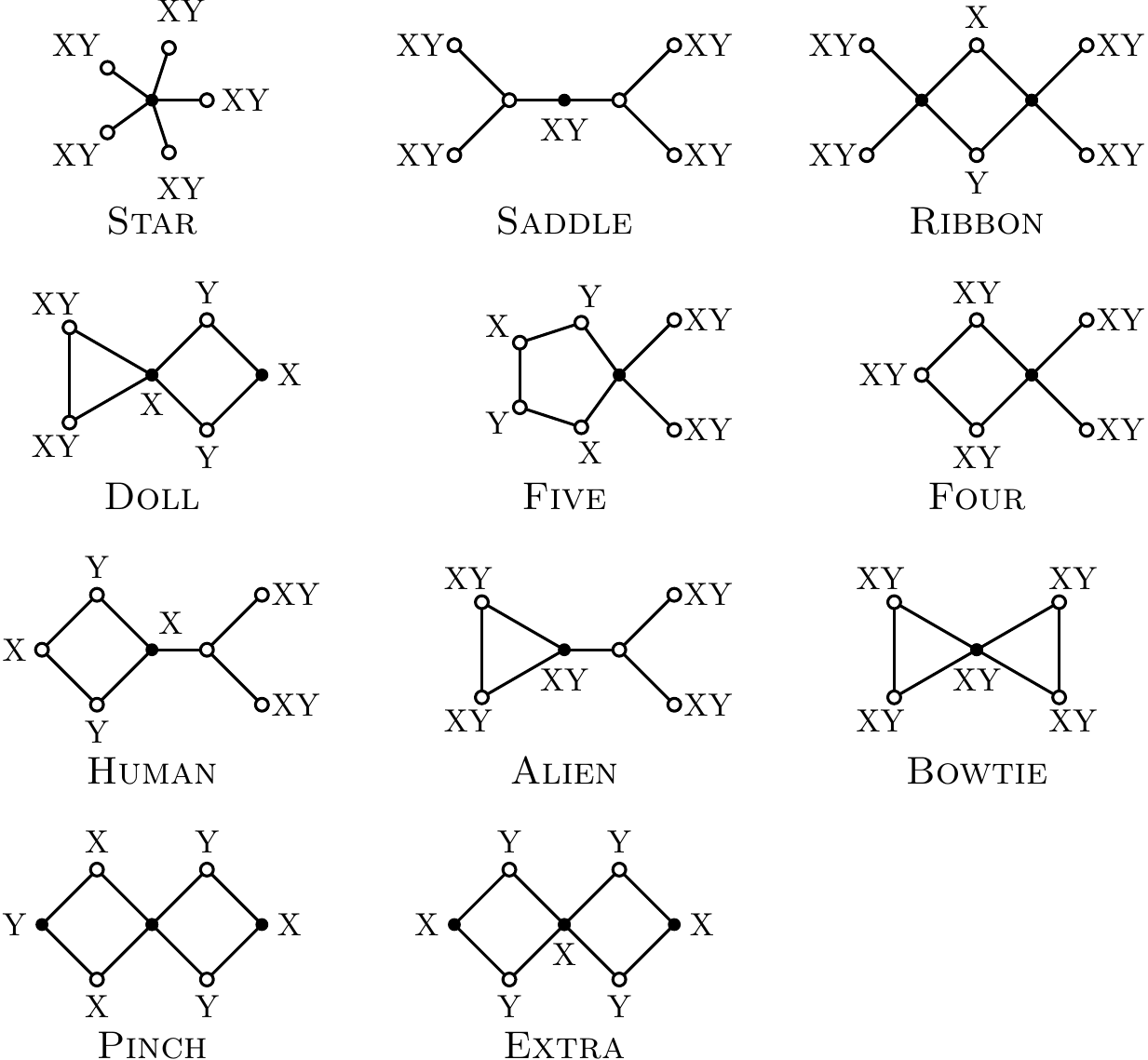}
  \caption{The XY-labelled representation of  $\T_3 \ss \Cc_0(\galt) \cap \Cc_0(\gap)$. For each white vertex $v \in V(G)$, we have $g(G - v) = 1$.}
  \label{fg-alt-1-con}
\end{figure}

\begin{theorem}[Mohar and \v{S}koda~\cite{mohar-obstr}]
\label{th-alt-obstr}
A graph $G \in \G_{xy}$ belongs to $\C_0(\galt)$ if and only if one of the following holds.
  \begin{enumerate}[label=\rm(\roman*)]
  \item
    $xy \not\in E(G)$ and $G \in \T_1 \cup \cdots \cup \T_4$.
  \item 
    $xy \in E(G)$ and $G - xy \in \T_5 \cup \T_6$.
  \end{enumerate}
\end{theorem}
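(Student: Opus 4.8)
The plan is to prove both implications, the ``if'' direction being the routine one. For each family $\T_i$ one exhibits the relevant embeddings and verifies minor-criticality directly. A graph $G$ coming from $\T_1$ (a Kuratowski graph together with one or two isolated terminals) has $g(G)=1$ because the Kuratowski part is non-planar, while $\e(G)=0$ since an isolated terminal cannot lie on a facial walk; hence $\galt(G)=1$, and every minor-operation lives in the Kuratowski part and makes $\mu G$ planar, so $\galt(\mu G)\le 0$. For $\T_2$ and $\T_4$, whose members are $xy$-sums of Kuratowski graphs --- necessarily copies of $K_{3,3}$ with non-adjacent terminals, as $K_5$ has no non-adjacent vertex pair --- the identity $\galt(G)=1$, the fact that $G^+$ is again $xy$-alternating, and minor-criticality all follow from Theorem~\ref{th-decker-ori} and Lemma~\ref{lm-alt-equiv} applied to the building blocks, using that $g,\gp,\e,\ep$ of $K_{3,3}$ are known (Fig.~\ref{fg-kuratowski-alt}) and that any minor-operation renders the affected block planar. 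The families $\T_3$, $\T_5$, and $\T_6$ are finite explicit lists --- with a few distinguished vertices/edges flagged in the figures, e.g.\ the white vertices $v$ of Fig.~\ref{fg-alt-1-con} with $g(G-v)=1$ --- so here it suffices to display, for each of the finitely many minor-operations, an embedding witnessing $\galt(\mu G)\le 0$, together with the two embeddings witnessing $\galt(G)=1$; these are the genuinely genus-$2$, $xy$-alternating examples.

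For the ``only if'' direction, let $G\in\C_0(\galt)$, so $\galt(G)=1$; since $\galt=g-\e$ with $\e\in\{0,1\}$, either $g(G)=1$ and $\e(G)=0$, or $g(G)=2$ and $\e(G)=1$. I would first strip away the degenerate configurations: if $\hat G$ is disconnected, or a terminal is isolated, or a single vertex (or $\{x,y\}$) separates a non-trivial piece from the rest, then Theorems~\ref{th-battle-ori} and~\ref{th-decker-ori} force both the genus and the $\galt$-criticality to concentrate in one block, which must then be a genus-$1$ obstruction --- a Kuratowski graph --- with the rest consisting of isolated terminals; this yields exactly $\T_1$. After this reduction one assumes $G$ has no such cut, and the argument splits along the two cases above. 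In the case $g(G)=1$, $\e(G)=0$, Corollary~\ref{cr-galt} together with the $1$-separation properties (S1)--(S3) and Lemma~\ref{lm-galt-constraint} force $G$ to be a $2$-sum across $\{x,y\}$ of two Kuratowski pieces, recovering $\T_2$ and $\T_4$ (and, when $xy\in E(G)$, $\T_5$ and $\T_6$). In the case $g(G)=2$, $\e(G)=1$, the graph $G$ is $xy$-alternating of genus $2$, and one must analyse how the single handle of a minimum-genus embedding sits relative to the two terminals on their alternating face; this is the source of $\T_3$ and its $xy\in E(G)$ counterpart. Finiteness of $\C_0(\galt)$, proved in~\cite{mohar-obstr}, guarantees the case analysis terminates, and one matches each surviving graph with a figure.

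The main obstacle is the last case: excluding genus-$2$, $xy$-alternating $\galt$-critical graphs beyond those listed. This cannot be reduced to Kuratowski's theorem or to a known obstruction list --- the torus obstructions are not known --- so it demands a hands-on study of minimum-genus embeddings: one argues that the alternating face together with the handle pins down a rigid local picture at $\{x,y\}$, that minor-criticality bounds what can be attached, and then enumerates the possibilities. The $2$-sum cases, by contrast, are essentially bookkeeping on top of Theorem~\ref{th-decker-ori} and Table~\ref{tb-side-classes}.
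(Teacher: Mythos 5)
This theorem is not proved in the paper at all: it is imported verbatim from \cite{mohar-obstr}, as the bracketed attribution indicates, and the surrounding text explicitly says that Mohar and \v{S}koda ``presented the complete list of graphs in $\C_0(\galt)$.'' There is therefore no in-paper proof to compare your sketch against, and nothing in the present paper would let you reconstruct one; the subsequent lemmas (\ref{lm-alt-torus}, \ref{lm-ti-membership}) \emph{use} Theorem~\ref{th-alt-obstr}, they do not establish it.

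Taken on its own, your outline has the right surface shape (verify each $\T_i$ directly; for the converse split on whether $\e(G)=0$ or $\e(G)=1$), but it is not a proof and it contains concrete errors. First, you misidentify the families: $\T_2$ consists of $xy$-sums of graphs in $\Cc_0(\gp)$, which includes the two $\theta=1$ pieces (``$K_5$ minus an edge,'' ``$K_{3,3}$ minus an edge''), not only $K_{3,3}$ with non-adjacent terminals; and the members of $\T_4$ (\name{Rocket}, \name{Lollipop}, \name{Bullet}, \name{Frog}, \name{Hive}) are not $xy$-sums at all --- they are higher-connectivity graphs given only by figures. Second, the reduction you propose for the ``only if'' direction does not do what you claim: handling disconnected graphs and cut vertices cannot ``yield exactly $\T_1$,'' since $2$-vertex cuts already produce $\T_2$ (and, with $xy\in E(G)$, $\T_5$ and $\T_6$), so your bookkeeping of which $\T_i$ falls out of which reduction step is off. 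Third, the assertion that $g(G)=1$ and $\e(G)=0$ ``force $G$ to be a $2$-sum across $\{x,y\}$'' is unsupported and appears to be false: several graphs in $\T_3$ and $\T_4$ have no $2$-cut through $\{x,y\}$. Finally, the case you yourself flag as the crux --- classifying the genus-$2$, $xy$-alternating, $\galt$-critical graphs --- is left entirely to a ``hands-on enumeration'' with no mechanism supplied; the finiteness of $\C_0(\galt)$ gives an existence statement for the list, not a way to produce it. As written, the proposal is an outline with a large hole precisely where the theorem is hard, plus misstatements about the target families; if you want a proof you must go to \cite{mohar-obstr}.
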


The graphs in $\T_1$ are disconnected and hence they do not appear in an $xy$-sum of connectivity 2.
We will use the following facts about the class $\C_0(\galt)$.

\begin{lemma}
  \label{lm-alt-torus}
  For each graph $G \in \C_0(\galt)$, we have $\gp(G) = \g(G) = 1$ and hence $\e(G) = \ep(G) = \theta(G) = 0$.
\end{lemma}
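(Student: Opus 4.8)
The plan is to show that every $G\in\C_0(\galt)$ has $\galt(G)=1$ (since $\C_0$ refers to level $k=0$, i.e.\ $\P(G)=k+1=1$), and then squeeze all the other parameters using the $1$-separations recorded in Fig.~\ref{fg-poset}. First I would recall that, by definition of $\C_k(\galt)$, membership in $\C_0(\galt)$ forces $\galt(G)=1$; in particular $G$ is not $xy$-alternating-planar. Next I would use Theorem~\ref{th-alt-obstr} to note that a graph in $\C_0(\galt)$ is, up to the edge $xy$ and isolated terminals, built from Kuratowski graphs (the families $\T_1,\dots,\T_6$), each of which embeds in the torus; hence $\g(G)\le 1$. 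Combined with $\galt(G)=g(G)-\e(G)\le g(G)$ and $\galt(G)=1$, this gives $g(G)=1$ and $\e(G)=0$.

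Having $g(G)=1$ and $\galt(G)=1$, Lemma~\ref{lm-galt-constraint} tells us $\galt(G)=g(G)$ or $\galt(G)=\gap(G)$; in either case, together with the chain of $1$-separations ($g\le\gp$, $\galt\le g$, $\gap\le\gp$, $g\le\gp\le\galt+1$ from Lemma~\ref{lm-galt-gp-sep}), we get $\gp(G)\le\galt(G)+1 = 2$. But I claim $\gp(G)=1$: if $\gp(G)=2$ then $\theta(G)=\gp(G)-g(G)=1$, and by Lemma~\ref{lm-alt-edge} (using $\e(G)=0$, which forces $\ep(G)=0$ as well by Lemma~\ref{lm-alt-equiv} since $\e(G)=1$ would be needed for $\ep(G)=1$ together with $\theta(G)=0$) we'd have $\galt(G)<\gap(G)$, so $\galt(G)=g(G)=1$ and $\gap(G)=2$. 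This does not by itself contradict anything yet, so the cleaner route is to argue directly that $G^+$ embeds in the torus: for each family $\T_i$, inspection of the figures (or the fact that a Kuratowski graph plus one edge still embeds in the torus, as in Fig.~\ref{fg-kuratowski-alt}) shows $\g(G^+)=1$, whence $\theta(G)=0$ and $\gp(G)=1$. Then $\ep(G)=\e(G^+)$: since $\theta(G)=0$, Lemma~\ref{lm-alt-equiv} gives $\e(G)=1\iff\ep(G)=1$, and as $\e(G)=0$ we conclude $\ep(G)=0$.

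So the key steps, in order, are: (1) extract $\galt(G)=1$ from the definition of $\C_0$; (2) use Theorem~\ref{th-alt-obstr} to bound $g(G)\le 1$ and conclude $g(G)=1$, $\e(G)=0$; (3) use the same structural description to get $g(G^+)=1$, hence $\theta(G)=0$ and $\gp(G)=1$; (4) apply Lemma~\ref{lm-alt-equiv} with $\theta(G)=0$ and $\e(G)=0$ to deduce $\ep(G)=0$. The main obstacle is step (2)--(3): it requires knowing that every graph arising in $\T_1,\dots,\T_6$ (and with $xy$ added, where relevant) embeds in the torus, which is a finite check against the figures referenced from~\cite{mohar-obstr} rather than a slick argument; once that toehold is secured, everything else is bookkeeping with the $1$-separation rules and Lemmas~\ref{lm-galt-constraint}, \ref{lm-galt-gp-sep}, and~\ref{lm-alt-equiv}.
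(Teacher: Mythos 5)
Your proposal is correct and follows essentially the same path as the paper's proof: establish $g(G)=g(G^+)=1$ by checking the finite list $\T_1,\ldots,\T_6$ from Theorem~\ref{th-alt-obstr}, then read off $\e=\ep=\theta=0$ from $\galt(G)=1$ and the definitions/Lemma~\ref{lm-alt-equiv} (your detour through the $1$-separation inequalities is harmless but, as you noticed, does not close the gap on its own). The one thing worth emphasizing is that the ``finite check'' you defer to inspection is where most of the paper's argument actually lives: for $\T_3$ and $\T_4$ it is not a glance at a figure but requires the observation that $G-x-y$ admits a planar drawing with all neighbors of $x$ and $y$ on the outer face, partitioned into six consecutive arcs $S_1,\ldots,S_6$ alternating between $x$-neighbors and $y$-neighbors, from which a toroidal embedding of $G^+$ is explicitly built (Fig.~\ref{fg-3-alt}).
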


\begin{proof}
  Observe that each graph in $\C_0(\galt)$ is nonplanar. We shall prove that $\gp(G) \le 1$ for each $G \in \T_1 \cup \cdots \cup \T_6$
  which implies that $\gp(G) = \g(G) = 1$ for each $G \in \C_0(\galt)$ by Theorem~\ref{th-alt-obstr}.
  For a graph $G \in \T_1$, $\hat{G^+}$ has two blocks, one isomorphic to a Kuratowski graph and the other consisting of a single edge.
  Thus $\gp(G) = g(\hat{G^+}) = 1$.
  Each graph $G$ in $\T_2$ can be obtained as an $xy$-sum of two Kuratowski graphs. Theorem~\ref{th-decker-ori} gives that $g(G) = 1$
  and $\theta(G) = 0$ since both parts of $G$ are $xy$-alternating. Hence $\gp(G) = 1$.

  To prove that a graph $G \in \T_3 \cup \T_4$ has $\gp(G) = 1$, it is sufficient to provide an embedding of $G^+$ in the torus.
  Fig.~\ref{fg-alt-1-con} and~\ref{fg-alt-2-con} show that $G - x - y$ has a drawing in the plane with all neighbors of $x$ and $y$ on the outer face.
  Thus $G /xy$ is a planar graph. Moreover, the edges in the local rotation around the identified vertex in $G /xy$ can be written
  as $S_1S_2\cdots S_6$ where edges in $S_1,S_3,S_5$ are those incident with $x$ in $G$ and $S_2,S_4,S_6$ are incident with $y$ in $G$.
  Therefore $G^+$ admits an embedding in the torus as shown in Fig.~\ref{fg-3-alt}.
  In the figure, a single edge is drawn from $x$ to the boundary of the planar patch for all the consecutive edges that connect $x$ and the planar patch.
  
  We shall show that this structure of graphs in $\T_3 \cup \T_4$ is not accidental. 
  Let $e \in E(G)$ be an edge incident with $x$ or $y$, say $e = xv$.
  If $G - e$ is nonplanar, then $G - e$ has an $xy$-alternating embedding $\Pi$ into the torus. 
  The two $\Pi$-angles at $x$ of the $xy$-alternating face divide the edges in the local rotation around $x$ into two sets, $S_1$ and $S_3$.
  Similarly, the edges incident with $y$ form sets $S_2$ and $S_4$.
  It is not hard to see that, since $G / xy$ is planar, we can pick $\Pi$ so that $v$ is $\Pi$-cofacial with $y$ (it is not $\Pi$-cofacial with $x$ since $G$ is not $xy$-alternating). 
  We may assume that $v$ lies in the region of edges in $S_4$.
  Thus $G / xy$ has the structure described above with $S_5 = \{e\}$ and $S_4$ split into sets $S_4'$ and $S_6$.
  It is thus enough to show that there exists an edge $e$ incident with $x$ or $y$ such that $G - e$ is nonplanar.
  For $G \in \T_3$ and an edge $e \in E(G)$ incident with a white vertex in Fig.~\ref{fg-alt-1-con}, $G - e$ is nonplanar.
  For $G \in \T_4$, the edges $e$ such that $G - e$ is nonplanar are depicted in Fig.~\ref{fg-alt-2-con} as underlined labels.
  
  Each graph $G$ in $\T_5 \cup \T_6$ is planar. Thus $\gp(G) = g(G^+) \le 1$. 
\end{proof}

\begin{figure}
  \centering
  \includegraphics{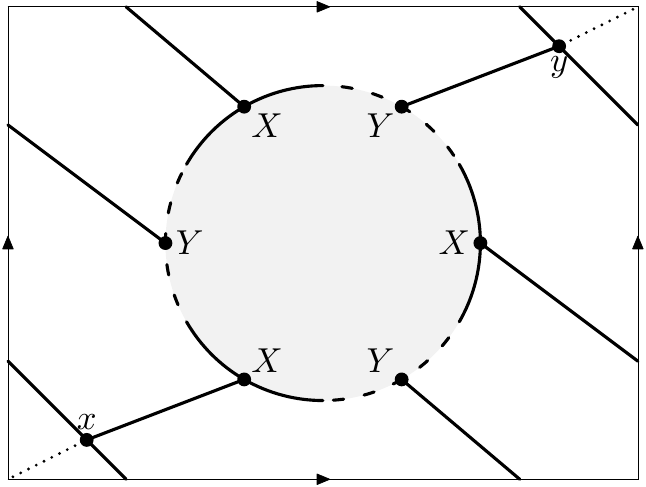}
  \caption{An embedding of $G^+$ in the torus for a graph $G$ such that $G / xy$ is planar and $G - e$ is $xy$-alternating in the torus for some edge $e$ incident with $x$ or $y$.}
  \label{fg-3-alt}
\end{figure}
We suspect that $\ep(G) = \e(G) = \theta(G) = 0$ for all graphs in $\C(\galt)$ but the proof seems out of reach. See~\cite{mohar-obstr} for more details.
Lemmas~\ref{lm-alt-edge} and~\ref{lm-alt-torus} classify when a graph in $\C_0(\galt) \cup \C_0(\gap)$ has $\theta$ equal to 1.
We have the following corollary.

\begin{corollary}
  \label{cr-theta-torus}
  Let $G$ be a graph in $\Cc_0(\galt) \cup \Cc_0(\gap)$. 
  Then $\gp(G) =1$ and $\ep(G) = 0$.
  Moreover, $\theta(G) = 1$ if and only if $G \in \Cc_0(\gap) \sm \Cc_0(\galt)$.
\end{corollary}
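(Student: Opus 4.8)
The plan is to treat separately the two kinds of graphs in the union: those that already lie in $\Cc_0(\galt)$, and those in $\Cc_0(\gap) \sm \Cc_0(\galt)$. These two subfamilies are disjoint and together exhaust $\Cc_0(\galt) \cup \Cc_0(\gap)$, so it suffices to verify all three conclusions on each of them. The first case is essentially immediate: if $G \in \Cc_0(\galt)$ then $G \in \C_0(\galt)$, so Lemma~\ref{lm-alt-torus} gives $\gp(G) = \g(G) = 1$ and $\ep(G) = \theta(G) = 0$; and since $G \notin \Cc_0(\gap) \sm \Cc_0(\galt)$ while $\theta(G) = 0$, both sides of the asserted equivalence are false here.

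For the second case I would argue as follows. Suppose $G \in \Cc_0(\gap) \sm \Cc_0(\galt)$, so $\M(G) = \dc1 \gap$ and $\gap(G) = 1$. Since $\galt$ and $\gap$ are $1$-separated with $\galt$ below $\gap$, we have $\galt(G) \in \{0, 1\}$. The key step is to rule out $\galt(G) = \gap(G)$: if that held, then (S2) would give $\dc1 \gap \ss \dc1 \galt$, forcing $\M(G) = \dc1 \galt$ with $\galt(G) = 1$, i.e.\ $G \in \Cc_0(\galt)$, contrary to assumption. (Alternatively one may invoke Lemma~\ref{lm-gap-minus-galt}, which gives $G^+ \in \C_0(\galt)$, and read off $\galt(G) < \galt(G^+)$ from its proof.) Hence $\galt(G) < \gap(G)$, and Lemma~\ref{lm-alt-edge} yields $\ep(G) = 0$ and $\theta(G) = 1$. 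The remaining identity then follows by arithmetic, $\gp(G) = \gap(G) + \ep(G) = 1$. Since here $\theta(G) = 1$ and $G \in \Cc_0(\gap) \sm \Cc_0(\galt)$, both sides of the equivalence are true.

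Putting the two cases together gives $\gp(G) = 1$ and $\ep(G) = 0$ for every $G$ in the family, and it yields the equivalence in the direction $G \in \Cc_0(\gap) \sm \Cc_0(\galt) \imp \theta(G) = 1$. The one point that needs a little care is the reverse reading, $\theta(G) = 1 \imp G \in \Cc_0(\gap) \sm \Cc_0(\galt)$: this is not obtained directly from Lemma~\ref{lm-alt-edge}, which characterizes $\galt(G) < \gap(G)$ by the \emph{conjunction} $\ep(G) = 0$ and $\theta(G) = 1$ rather than by $\theta(G) = 1$ alone. Instead one uses that $\ep(G) = 0$ has already been established for all $G$ in the family, so $\theta(G) = 1$ does force $\galt(G) < \gap(G)$, which in turn excludes the first case. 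I expect no real obstacle here — the only things to watch are keeping the $\Cc_0(\galt)$/$\Cc_0(\gap)$ overlap straight and not invoking Lemma~\ref{lm-alt-edge} before $\ep(G) = 0$ is in hand.
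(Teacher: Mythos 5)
Your proof is correct and follows essentially the same two-case decomposition as the paper: $\Cc_0(\galt)$ is handled via Lemma~\ref{lm-alt-torus}, and $\Cc_0(\gap) \sm \Cc_0(\galt)$ via Lemma~\ref{lm-alt-edge} once $\galt(G) < \gap(G)$ is established. The only (minor) variation is that you derive $\galt(G) < \gap(G)$ directly from $(\mathrm{S2})$ and the definition of the critical class, whereas the paper routes through Lemma~\ref{lm-gap-minus-galt} to get $G^+ \in \C_0(\galt)$; you note this alternative yourself, and both give the same conclusion.
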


\begin{proof}
  Let $G \in \Cc_0(\gap) \sm \Cc_0(\galt)$.
  By Lemma~\ref{lm-gap-minus-galt}, $G^+ \in \C_0(\galt)$.
  By Lemma~\ref{lm-alt-edge}, $\theta(G) = 1$ and $\ep(G) = 0$.
  Since $\gap(G) = 1$, $\gp(G) = \gap(G) - \ep(G) = 1$.

  If $G \in \Cc_0(\galt)$, then $G \in \C_0(\galt)$ and thus $\theta(G) = \ep(G) = 0$ and $\gp(G) = 1$ by Lemma~\ref{lm-alt-torus}.
\end{proof}

\begin{figure}
  \centering
  \includegraphics{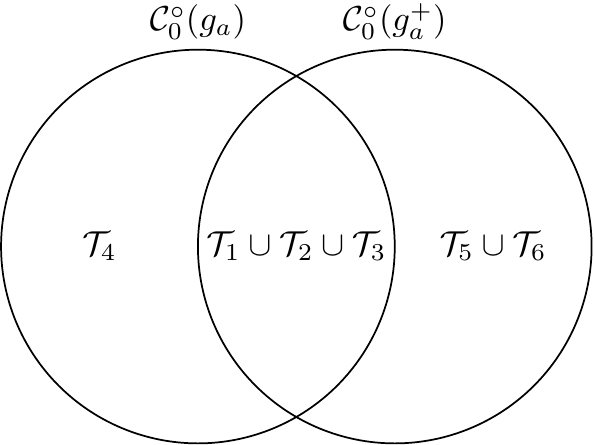}
  \caption{Venn diagram of critical classes (for alternating genus) for the torus.}
  \label{fg-cc-alt-diagram}
\end{figure}
The classes $\T_1, \ldots, \T_6$ lie in $\Cc_0(\gap) \cup \Cc_0(\galt)$. More precise membership as depicted in Fig.~\ref{fg-cc-alt-diagram} is proven below.
We shall use the following observation.

\begin{lemma}
\label{lm-vertex-cover}
  Let $G \in \G_{xy}$, $\P$ a minor-monotone graph parameter, and $v \in V(G) \sm \{x,y\}$.
  If $\P(G- v) = \P(G)$, then $\P(\mu G) = \P(G)$ for each $\mu = (uv, \cdot) \in \M(G)$.
\end{lemma}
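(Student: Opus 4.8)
The statement says that if deleting a non-terminal vertex $v$ does not change the value of a minor-monotone parameter $\P$, then neither does any minor-operation on an edge $uv$ incident with $v$. The plan is to treat the two possible minor-operations on $uv$ separately and use minor-monotonicity in both directions.

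First I would handle the deletion $\mu = (uv,-)$. Here $G - uv$ is a minor of $G$, so $\P(G-uv) \le \P(G)$ by minor-monotonicity. For the reverse inequality, note that $G - v$ is a minor of $G - uv$ (just delete the remaining edges at $v$ and then the isolated vertex $v$), so $\P(G-v) \le \P(G-uv)$. Combining with the hypothesis $\P(G-v) = \P(G)$ gives $\P(G) \le \P(G-uv) \le \P(G)$, hence equality.

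Next I would handle the contraction $\mu = (uv,/)$. Again $G/uv$ is a minor of $G$, so $\P(G/uv) \le \P(G)$. For the reverse direction, observe that $G-v$ is a minor of $G/uv$: in $G/uv$ the vertices $u$ and $v$ have been identified into a single vertex whose neighbourhood is $N(u) \cup N(v) \setminus \{u,v\}$; deleting all edges of $G/uv$ that came from edges of $G$ incident with $v$ (other than $uv$ itself, which is gone) leaves a graph isomorphic to $G - v$ (with $u$ playing the role it had in $G-v$). One subtlety is that if $uv$ is the edge $xy$ — but this cannot occur, since $v$ is a non-terminal and $xy$ joins the two terminals; also contraction of an edge incident with a terminal keeps the contracted vertex a terminal, which is consistent since here $v$ is not a terminal, so $u$ may or may not be a terminal and the contracted vertex is a terminal exactly when $u$ is, matching $G-v$. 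Thus $\P(G-v) \le \P(G/uv)$, and with the hypothesis we get $\P(G) \le \P(G/uv) \le \P(G)$, i.e.\ $\P(G/uv) = \P(G)$.

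The only thing requiring care is verifying that $G - v$ really is a minor of $G/uv$ in the category $\G_{xy}$, i.e.\ that the terminal structure is respected; this is the "main obstacle," though it is routine. Since $v \notin \{x,y\}$, deleting $v$ does not disturb the terminals, and the identification of $u$ and $v$ in $G/uv$ produces a terminal precisely when $u$ is a terminal, so restricting $G/uv$ down to $G-v$ is a legitimate sequence of minor-operations in $\G_{xy}$. Once this is checked, both cases follow immediately from applying minor-monotonicity twice.
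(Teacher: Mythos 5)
Your proof is correct and uses exactly the same key observation as the paper: $G-v$ is a minor of $\mu G$, so minor-monotonicity gives $\P(G) \ge \P(\mu G) \ge \P(G-v) = \P(G)$. The paper states this in one line for both operations at once; your version spells out the deletion and contraction cases separately and verifies the terminal bookkeeping, but the argument is the same.
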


\begin{proof}
  Let $\mu = (uv, \cdot) \in \M(G)$.
  Since $G - v$ is a minor of $\mu G$ and $\P$ is minor-monotone,
  $\P(G) \ge \P(\mu G) \ge \P(G - v) = \P(G)$. 
\end{proof}

Lemma~\ref{lm-vertex-cover} can be used to prove that $\dc1 g = \emptyset$ if we can find 
a vertex cover $U$ of $G$ such that $g(G-v) = g(G)$ for each $v \in U$.
We shall use this idea to prove that $\T_3 \ss \Cc_0(\gap)$. The following lemma will be also used.

\begin{lemma}[Lemma~19,~\cite{mohar-obstr}]
\label{lm-not-2-alt}
  Let $G \in \Gcxy$ be a graph such that $G / xy$ is planar. If $\gap(G) \ge 1$, then
  either $x$ and $y$ have at least five common neighbors or there are six distinct non-terminal vertices $v_1, \ldots, v_6$ 
  such that $v_1, v_2, v_3$ are adjacent to $x$ and $v_4, v_5, v_6$ are adjacent to $y$.
\end{lemma}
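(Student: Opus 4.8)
The plan is to prove the contrapositive: assuming that $x$ and $y$ have at most four common neighbours and that there are \emph{no} six distinct non-terminal vertices $v_1,\dots,v_6$ with $v_1,v_2,v_3\in N(x)$ and $v_4,v_5,v_6\in N(y)$, I would deduce $\gap(G)=0$ (a planar embedding of $G^+$, or an $xy$-alternating one in the torus, both giving $\gap(G)=g(G^+)-\ep(G)\le 0$). The starting point is that $\gap(G)=\gp(G)-\ep(G)$ and $\ep(G)\le 1$, so $\gap(G)\ge 1$ forces $\gp(G)\ge 1$, i.e.\ $G^+$ is nonplanar; fix a subdivision $K$ of $K_5$ or $K_{3,3}$ inside $G^+$. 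Since $G/xy=G^+/xy$ is planar, $K$ must be destroyed by identifying $x$ and $y$: both $x$ and $y$ lie on $K$ (otherwise $K\subseteq G-x$ or $K\subseteq G-y$, a subgraph of $G/xy$), and the way they sit on $K$ must be such that merging them planarises it. I would first dispose of the main case, in which $K$ can be chosen so that $x$ and $y$ are both branch vertices of $K$ and the corresponding model edge between them is realised by a path $P$ of $K$ (an edge if $xy\in E(K)$, otherwise a path with an internal vertex since $xy\notin E(G)$).

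In that main case the conclusion comes out almost for free when $K$ is a $K_5$-subdivision. Then $x$ is the endpoint of three branches of $K$ other than $P$; their first edges lead to three distinct neighbours $v_1,v_2,v_3\in N(x)$, and symmetrically $y$ yields $v_4,v_5,v_6\in N(y)$. Because the ten branches of a $K_5$-subdivision are pairwise internally disjoint, the six vertices are distinct, non-terminal, and avoid $P$ — exactly the forbidden $3{+}3$ configuration, contradiction. The same first-edge argument works for $K_{3,3}$-subdivisions provided the three relevant branches at each of $x,y$ have length at least two; the awkward sub-case is when some of these branches are single edges, since then the vertices that would play the role of the $v_i$ are the branch vertices of $K$ themselves, and a structurally shared short branch can make a branch vertex a \emph{common} neighbour of $x$ and $y$. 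Here I would argue that enough of these coincidences must occur to force five common neighbours of $x$ and $y$, again exploiting planarity of $G/xy$ to exclude the generic behaviour.

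The remaining configurations — where $x$ or $y$ cannot be taken as a branch vertex of any Kuratowski subdivision in $G^+$ — I would attack through the complementary combinatorial fact: failure of the $3{+}3$ configuration is equivalent (by a short Hall/defect count on $N(x),N(y)$, keeping track of common neighbours) to $\deg_G(x)\le 2$, or $\deg_G(y)\le 2$, or $|N(x)\cup N(y)|\le 5$. In each of these cases the image $w$ of $x,y$ in $G/xy$ has small degree, or one terminal has degree at most two, and one can reverse the contraction $G/xy\to G^+$: start from a planar embedding of $G/xy$, double the edge from $w$ to each common neighbour (routing the two copies consecutively at $w$), split $w$ back into $x$ and $y$ inside a small disc — using at most one extra handle, which is then traversed by a single face in the cyclic order $x,y,x,y$ — and re-insert the edge $xy$; the resulting embedding of $G^+$ is planar or $xy$-alternating in $\SS_1$, so $\gap(G)\le 0$, a contradiction. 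I expect the genuinely hard part of the whole proof to be exactly these degenerate configurations (short Kuratowski branches, or a rotation at $w$ that $3$-connectivity of $G/xy$ forces into an uncooperative cyclic order): one must show that in each of them either the contraction can still be reversed with a single handle, or five common neighbours are present. It is also here, rather than in the Kuratowski step, that the full hypothesis $\gap(G)\ge 1$ (not merely $\gp(G)\ge 1$) is needed, to rule out that $G^+$ is toroidal and $xy$-alternating. (That both excluded structures are truly necessary is visible from $G=K_{3,7}$ with $x,y$ two vertices of the part of size $3$: then $G/xy=K_{2,7}$ is planar while $g(G^+)=g(K_{3,7}+xy)=2$, so $\gap(G)\ge 1$, and indeed $x$ and $y$ have seven common neighbours.)
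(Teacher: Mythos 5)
The paper does not prove Lemma~\ref{lm-not-2-alt}; it is imported verbatim from Mohar and \v{S}koda~\cite{mohar-obstr}, so your proposal must stand on its own rather than be checked against an in-text argument --- and it does not. The step you describe as ``almost for free'' is incorrect. For a $K_5$-subdivision $K$ in $G^+$ with $x,y$ as branch vertices and $P$ the $x$--$y$ branch, the three branches at $x$ other than $P$ and the three at $y$ other than $P$ run to the \emph{same} three branch vertices $b_1,b_2,b_3$; internal disjointness of branches does not keep their endpoints apart. If, say, both the $x$--$b_1$ and $y$--$b_1$ branches are single edges, then $v_1=b_1=v_4$, and in the extreme case where all six of these branches are edges we get $\{v_1,\ldots,v_6\}=\{b_1,b_2,b_3\}$: three vertices, not six, all of them common neighbours of $x$ and $y$, so neither disjunct of the conclusion is obtained. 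Your fallback --- that these coincidences ``must force five common neighbours'' --- is an unproved assertion and is false as stated: the configuration just described yields exactly three. What actually rescues it is that one has $\gap(G)=0$ there (for instance $G=K_5-xy$, where $G^+=K_5$ is $xy$-alternating on the torus as in Fig.~\ref{fg-kuratowski-alt}); so closing the case requires exhibiting a planar or $xy$-alternating toroidal embedding of $G^+$, not more counting. Nothing in the $K_5$ branch of your argument makes contact with the hypothesis $\gap(G)\ge 1$.

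The remaining scaffolding is a sketch, not a proof. Your Hall-type reformulation --- failure of the $3{+}3$ configuration iff $\deg_G(x)\le 2$, or $\deg_G(y)\le 2$, or $|N(x)\cup N(y)|\le 5$ --- is correct and a useful organizing device, and the plan to start from a planar embedding of $G/xy$, double the common-neighbour edges, split $w$ back into $x$ and $y$, and spend at most one handle to conclude $\gap(G)\le 0$ is a plausible route. But for this to close the argument you must show that some planar embedding of $G/xy$ admits a rotation at $w$ with at most four $x/y$-alternations once the doubled edges are re-routed, and no such argument is supplied; you yourself flag these ``degenerate configurations'' as ``the genuinely hard part.'' Since the one case you claim to have dispatched is wrong and the rest is acknowledged to be open, the proposal does not constitute a proof of the lemma.
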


In order to determine if a graph $G \in \Cc_0(\galt)$ also belongs to $\Cc_0(\gap)$ we can either use Lemma~\ref{lm-galt-minus-gap}
or note that, since $\gap(G) \ge \galt(G)$ and $\gap$ is minor-monotone by Lemma~\ref{lm-galt-minor}, each graph $G \in \Cc_0(\galt)$ contains a graph in $\Cc_0(\gap)$ as a minor.

\begin{lemma}
  \label{lm-ti-membership}
  $\Cc_0(\galt) \cap \Cc_0(\gap) = \T_1 \cup \T_2 \cup \T_3$,
  $\Cc_0(\galt) \sm \Cc_0(\gap) = \T_4$, and
  $\Cc_0(\gap) \sm \Cc_0(\galt) = \T_5 \cup \T_6$.
\end{lemma}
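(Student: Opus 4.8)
The plan is to bootstrap off the two classifications already available in the excerpt: Theorem~\ref{th-alt-obstr}, which describes $\C_0(\galt)$, and Lemma~\ref{lm-gap-minus-galt}, which ties $\Cc_0(\gap)\sm\Cc_0(\galt)$ to $\C_0(\galt)$. Since every graph in $\T_1,\dots,\T_4$ lies in $\Gcxy$, restricting Theorem~\ref{th-alt-obstr}(i) to graphs without the edge $xy$ gives $\Cc_0(\galt)=\T_1\cup\T_2\cup\T_3\cup\T_4$. Next, by Lemma~\ref{lm-gap-minus-galt} a graph $G\in\Gcxy$ lies in $\Cc_0(\gap)\sm\Cc_0(\galt)$ exactly when $G^+\in\C_0(\galt)$; as $xy\in E(G^+)$, Theorem~\ref{th-alt-obstr}(ii) turns this into $G^+-xy=G\in\T_5\cup\T_6$. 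This already yields the third identity $\Cc_0(\gap)\sm\Cc_0(\galt)=\T_5\cup\T_6$ and shows that $\T_5\cup\T_6$ is disjoint from $\Cc_0(\galt)\supseteq\T_1\cup\dots\cup\T_4$. Hence the remaining task is to decide, for each $G\in\T_1\cup\dots\cup\T_4=\Cc_0(\galt)$, whether $G\in\Cc_0(\gap)$; proving that this holds precisely on $\T_1\cup\T_2\cup\T_3$ and fails on $\T_4$ simultaneously delivers the first two identities and the disjointness of $\T_4$ from $\T_1\cup\T_2\cup\T_3$.

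For this last step I would use the following criterion. Fix $G\in\Cc_0(\galt)$; by Corollary~\ref{cr-theta-torus}, $\gap(G)=1$, so $G\in\Cc_0(\gap)$ iff $\M(G)=\dc1\gap$. Since $\M(G)=\dc1\galt\ss\dc1 g\cup\dc1\gap$ by Corollary~\ref{cr-galt}, and $\dc1\gap\ss\M(G)$ trivially, we get that $G\in\Cc_0(\gap)$ if and only if $\dc1 g\ss\dc1\gap$; in particular $\dc1 g=\emptyset$ already forces $G\in\Cc_0(\gap)$. A useful unpacking of ``$\mu\in\dc1 g\sm\dc1\gap$'' for such a $G$, via Lemmas~\ref{lm-galt-constraint} and~\ref{lm-alt-edge}, is: $g(\mu G)=0$, $\theta(\mu G)=1$, and $\ep(\mu G)=0$, i.e. $\mu$ planarizes $G$ while $(\mu G)^+$ stays nonplanar and fails to be $xy$-alternating (equivalently, $\mu G$ still has a minor in $\T_5\cup\T_6$).

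The containment $\T_1\cup\T_2\cup\T_3\ss\Cc_0(\gap)$ I would prove family by family. For $G\in\T_1$, the graph $G^+$ is a Kuratowski block with one extra edge $xy$ (a pendant edge, or a separate $K_2$-component); every minor-operation of $G$ acts inside the Kuratowski block and planarizes it, so $(\mu G)^+$ is planar and $\gap(\mu G)\le 0$. For $G\in\T_2$, $G$ is an $xy$-sum of two copies of $K_{3,3}$ (the only Kuratowski graph with a non-adjacent pair of vertices, hence the only one admissible as a part); any $\mu$ planarizes one side, and a short computation with Theorem~\ref{th-decker-ori}, using $\ep(K_{3,3})=1$ and $\gp(K_{3,3})=1$ for the untouched side (Lemma~\ref{lm-alt-kuratowski}), shows $\gap(\mu G)=0$ whether $(\mu G)^+$ comes out planar or merely $xy$-alternating. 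For $G\in\T_3$, I would invoke Lemma~\ref{lm-vertex-cover} with $\P=g$: the caption of Fig.~\ref{fg-alt-1-con} records $g(G-v)=1=g(G)$ for each white vertex $v$, so no minor-operation on an edge meeting a white vertex lies in $\dc1 g$; checking from Fig.~\ref{fg-alt-1-con} that the white vertices cover every edge of $G$ (including those at the terminals) then gives $\dc1 g=\emptyset$, and the criterion yields $G\in\Cc_0(\gap)$.

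Finally, for $\T_4\cap\Cc_0(\gap)=\emptyset$ I would, for each of the finitely many graphs $G\in\T_4$ in Fig.~\ref{fg-alt-2-con}, exhibit a single minor-operation $\mu$ with $g(\mu G)=0$, $\theta(\mu G)=1$, $\ep(\mu G)=0$ (so $\mu\in\dc1 g\sm\dc1\gap$ and $G\notin\Cc_0(\gap)$ by the criterion); concretely this is the operation collapsing $G$ onto a planar graph that still has a minor in $\T_5\cup\T_6$ around $x$ and $y$. I expect the main obstacle to be exactly these two finite verifications: on the $\T_3$ side, the combinatorial fact that the white vertices form a vertex cover, and on the $\T_4$ side, identifying for each listed graph a minor-operation that preserves $\gap$. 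Lemma~\ref{lm-not-2-alt} --- the ``five common neighbours of $x$ and $y$, or a $3{+}3$ neighbour pattern'' dichotomy for graphs $H$ with $H/xy$ planar --- is the natural certificate for $\ep(\mu G)=0$ on the $\T_4$ side, and (in its contrapositive form) for ruling out any genus-preserving bad operation on the $\T_3$ side.
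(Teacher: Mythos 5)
Your overall framework matches the paper's: derive $\Cc_0(\galt)=\T_1\cup\cdots\cup\T_4$ and $\Cc_0(\gap)\sm\Cc_0(\galt)=\T_5\cup\T_6$ from Theorem~\ref{th-alt-obstr} and Lemma~\ref{lm-gap-minus-galt}, then sort $\T_1,\ldots,\T_4$ by deciding membership in $\Cc_0(\gap)$. The reduction to ``does $\dc1 g$ escape $\dc1\gap$'' is also the paper's Lemma~\ref{lm-galt-minus-gap}. But there are concrete problems in the case analysis.

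The main gap is in $\T_3$. You assert that one should ``check from Fig.~\ref{fg-alt-1-con} that the white vertices cover every edge of $G$ (including those at the terminals),'' which would give $\dc1 g=\emptyset$ outright. That check fails: for most graphs in $\T_3$ (\name{Saddle}, \name{Human}, \name{Alien}, \name{Bowtie}, \name{Extra}, \name{Doll}, \name{Pinch}) the white vertices only cover the edges \emph{not} incident with a terminal, and there remain edges meeting $x$ or $y$ unaccounted for. For those, $\dc1 g$ is not obviously empty and the paper has to do genuine extra work: for deletions it invokes Lemma~\ref{lm-not-2-alt} to show $\gap(G-e)=0$ directly, and for contractions it decomposes $\mu G^+$ as a planar $xy$-sum or exhibits $\mu G$ as a proper minor of another $\T_3$ member (\name{Four} or \name{Five}) already known to lie in $\Cc_0(\gap)$. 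You do gesture at Lemma~\ref{lm-not-2-alt} as a ``natural certificate,'' but your stated plan (vertex cover plus Lemma~\ref{lm-vertex-cover}) does not reach the contraction cases at all, and the deletion cases on black vertices are not covered by the vertex-cover argument either.

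A secondary issue is your description of $\T_2$ as the $xy$-sums of two copies of $K_{3,3}$. In fact the parts of a graph in $\T_2$ range over all of $\Cc_0(\gp)$ (which also includes $K_5$ minus an edge and $K_{3,3}$ minus an edge with the endpoints as terminals), giving six graphs, not one. Your proof sketch only treats the $K_{3,3}+K_{3,3}$ case; the other parts have $\theta=1$, so the Decker computation you propose needs to be rerun for them. The paper avoids this entirely: since the untouched side $G_2$ has $\gp(G_2)=1$, the minor $\mu G$ (after contracting an $x$--$y$ path in the touched side) still contains $G_2^+$, hence a Kuratowski minor; therefore $g(\mu G)=1$, $\dc1 g=\emptyset$, and Lemma~\ref{lm-galt-minus-gap} finishes. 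That argument is both correct for all of $\T_2$ and shorter than yours. For $\T_4$ your proposed route (exhibit $\mu$ with $g(\mu G)=0$, $\theta(\mu G)=1$, $\ep(\mu G)=0$) is viable in principle, but the paper's observation that each $\T_4$ graph has a proper minor in $\T_6\ss\Cc_0(\gap)$ --- combined with minor-monotonicity of $\gap$ (Lemma~\ref{lm-galt-minor}) --- requires no parameter computations at all and would have been available to you from the remark immediately preceding the lemma.
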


\begin{proof}
  By Theorem~\ref{th-alt-obstr}, $\T_1 \cup \T_2 \cup \T_3 \cup \T_4 \ss \Cc_0(\galt)$.
  Let us start by proving that $\T_1 \cup \T_2 \cup \T_3 \ss \Cc_0(\gap)$.
  Suppose that $G \in \T_1$. Then it is not difficult to see that $G \in \Cc_0(\gap)$ since $\hat{G^+}$ has two blocks, 
  one isomorphic to a Kuratowski graph and the other consisting of a single edge.

  Let $G \in \T_2$ and $\mu \in \M(G)$.
  Since $G$ is an $xy$-sum of two graphs in $\Cc_0(\gp)$,  neither contraction nor deletion of an edge 
  on one side destroys the Kuratowski graph on the other side.
  Thus $g(\mu G) = 1$ and $\M(G) \cap \dc1 g = \emptyset$. 
  By Lemma~\ref{lm-galt-minus-gap}, $G \in \Cc_0(\gap)$.
\begin{figure}
  \centering
  \includegraphics{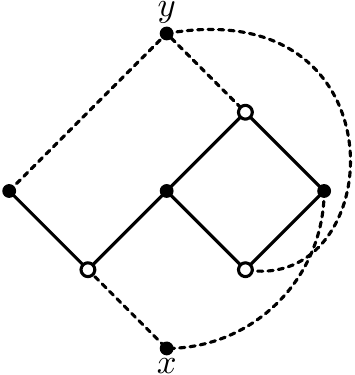}
  \caption{The graph \name{Pinch} minus a vertex. The white vertices form one part of the $K_{3,3}$-subdivision.}
  \label{fg-pinch-u}
\end{figure}

  Let us prove now that $\T_3 \ss \Cc_0(\gap)$.
  Consider a graph $G \in \T_3$.
  By Lemma~\ref{lm-galt-minus-gap}, it is enough to show that $\dc1 g \sm \dc1 \gap = \emptyset$.
  Let $\mu \in \M(G)$. 
  Let $U$ be the set of white vertices of $G$ as depicted in Fig.~\ref{fg-alt-1-con}. 
  It is not hard to show that, for each $v \in U$, $G - v$ is nonplanar.
  We omit the detailed proof of this fact and only demonstrate the proof technique on the graph \name{Pinch}.
  Since $U$ is an orbit of the isomorphism group of \name{Pinch}, it is enough to show that $G - u$ is nonplanar
  for one of the vertices $u \in U$. Indeed, $G - u$ is isomorphic to a subdivision of $K_{3,3}$ as is exhibited in Fig.~\ref{fg-pinch-u}.
  Thus $G - u$ is nonplanar for each $u \in U$ as required.

  By Lemma~\ref{lm-vertex-cover}, we may assume that the edge $e$ of $\mu$ is not covered by a vertex in $U$.
  This proves that the graphs \name{Star}, \name{Ribbon}, \name{Five} and \name{Four} are in $\Cc_0(\gap)$ since $U$ is a vertex cover.
  For the other graphs, observe that the vertices in $U$ cover all the edges not incident with a terminal. 
  Thus $e$ corresponds to a label on a black vertex of $G$ in Fig.~\ref{fg-alt-1-con}.
  Assume that $\mu = (e, -)$.
  By inspection, the conclusion of Lemma~\ref{lm-not-2-alt} is violated for $G - e$. Hence $\gap(\mu G) = 0$ and $\mu \in \dc1 \gap$.
  We may assume now that $\mu = (e, /)$. 
  When $G$ is one of the graphs \name{Saddle}, \name{Human}, \name{Alien}, \name{Bowtie}, when $G$ is \name{Extra}
  with $e$ incident with the non-terminal vertex of degree 5, and when $G$ is \name{Doll} with $e$ incident with the non-terminal vertex of degree 5, 
  the graph $\mu G^+$ is an $xy$-sum of two graphs $G_1$ and $G_2$.
  We observe that in all cases, the graphs $G_1^+$ and $G_2^+$ are planar and thus $\mu G^+$ is planar by Theorem~\ref{th-decker-ori}. We conclude that $\mu \in \dc1 \gap$.
  If $G$ is \name{Pinch}, then $\mu G$ is a proper minor of \name{Four}. Since we already showed that \name{Four} $\in \Cc_0(\gap)$,
  we have that $\mu \in \dc1 \gap$ in this case as well. If $G$ is \name{Doll} and $e$ is incident with the black vertex of degree 3, 
  then $\mu G$ is a proper minor of \name{Four}.
 The remaining case is that $G$ is \name{Extra} and $e$ is incident with a non-terminal black vertex
  of degree 3. Again, $\mu G$ is a proper minor of \name{Five} and thus $\mu \in \dc1 \gap$.

  By Lemma~\ref{lm-gap-minus-galt}, the  class $\Cc_0(\gap) \sm \Cc_0(\galt)$ contains precisely the graphs $G$ such that $G^+ \in \C_0(\galt)$.
  Theorem~\ref{th-alt-obstr} gives that the graphs in $\T_5 \cup \T_6$ (and only those) have that property.

  We prove that $\Cc_0(\galt) \sm \Cc_0(\gap) = \T_4$ by showing that $\T_4 \cap \Cc_0(\gap) = \emptyset$.
  Since each $G \in \T_4$ has a proper minor
  in $\T_6 \ss \Cc_0(\gap)$, $G$ does not belong to $\Cc_0(\gap)$. 
  \name{Pentagon} is a minor of \name{Rocket} and \name{Lollipop}, while \name{Hexagon} is
  a minor of \name{Bullet}, \name{Frog}, and \name{Hive}.
  Hence $\T_4 \ss \Cc_0(\galt) \sm \Cc_0(\gap)$.
  We have shown that the classes $\T_1, \T_2, \T_3, \T_5, \T_6$ are subclasses of $\Cc_0(\gap)$.
  Hence $\Cc_0(\galt) \sm \Cc_0(\gap) \ss \T_4$ by Theorem~\ref{th-alt-obstr}.
\end{proof}

\begin{figure}
  \centering
  \includegraphics{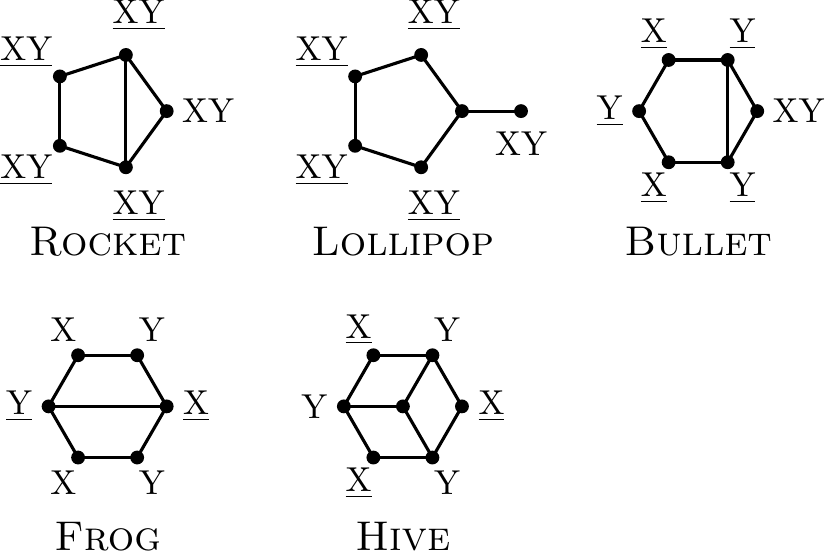}
  \caption{The XY-labelled representation of $\T_4 = \Cc_0(\galt) \sm \Cc_0(\gap)$.}
  \label{fg-alt-2-con}
\end{figure}
Let us present some restrictions on an $xy$-sum that is an obstruction for the torus.

\begin{lemma}
  \label{lm-torus-prop}
  Let $G$ be an $xy$-sum of connected graphs $G_1$ and $G_2$.
  If $G \in \Forb(\SS_1)$, then 
  \begin{enumerate}[label=\rm(\roman*)]
  \item
    $\gp(G_1) = \gp(G_2) = 1$,
  \item
    $\ep(G_1)\ep(G_2) = 0$, and
  \item
    $\eta(G_1, G_2) = 2$ if and only if $xy \in E(G)$.
  \end{enumerate}
\end{lemma}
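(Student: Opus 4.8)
The plan is to combine the genus formula of Theorem~\ref{th-decker-ori} with the minor-minimality of $G$ --- which gives $g(G)=2$ and forces every proper minor of $G$ to embed into the torus --- and with Lemma~\ref{lm-edge-xy}.

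I would first prove~(i). For the upper bound, note that $|V(G_2)|\ge 3$: since $G_2$ is connected and $xy\notin E(G_2)$, having $V(G_2)=\{x,y\}$ would leave $G_2$ edgeless, contradicting connectedness. Hence $V(G_2)\setminus\{x,y\}\ne\emptyset$, so contracting an $x$--$y$ path of $G_2$ down to the single edge $xy$ realizes $G_1^+=G_1+xy$ as a minor of $G$ with strictly fewer vertices, that is, a proper minor; therefore $\gp(G_1)=g(G_1^+)\le 1$, and symmetrically $\gp(G_2)\le 1$. For the lower bound, suppose $\gp(G_1)=0$. Since $g(G)\le\h_1(G)$ in all cases by Theorem~\ref{th-decker-ori}, formula~(\ref{eq-face}) yields $2=g(G)\le\h_1(G)=\gp(G_1)+\gp(G_2)-\ep(G_1)\ep(G_2)\le 0+1-0=1$, a contradiction. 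Thus $\gp(G_1)=\gp(G_2)=1$, proving~(i).

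Part~(ii) is then immediate: if $\ep(G_1)=\ep(G_2)=1$, then $\h_1(G)=\gp(G_1)+\gp(G_2)-1=1$, so $g(G)\le\h_1(G)=1$, again contradicting $g(G)=2$. For~(iii), suppose first that $xy\in E(G)$. The subgraphs $G_1$, $G_2$, and the subgraph consisting of the single edge $xy$ have edge sets covering $E(G)$, so Lemma~\ref{lm-minor-tight} and the minimality of $G$ force the edge $xy$ to be minor-tight; Lemma~\ref{lm-edge-xy} then gives $\eta(G_1,G_2)=2$. Conversely, suppose $\eta(G_1,G_2)=2$. Because $\eta(G_1,G_2)=\theta(G_1)+\theta(G_2)-\ep(G_1)\ep(G_2)$ and $\ep(G_1)\ep(G_2)=0$ by~(ii), this forces $\theta(G_1)=\theta(G_2)=1$, hence $g(G_i)=\gp(G_i)-\theta(G_i)=0$ by~(i). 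If $xy\notin E(G)$, then Theorem~\ref{th-decker-ori} together with~(\ref{eq-handle}) gives $g(G)\le\h_0(G)=g(G_1)+g(G_2)+1=1$, a contradiction; therefore $xy\in E(G)$.

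The only step needing genuine care is realizing $G_1^+$ and $G_2^+$ as proper minors of $G$ in order to bound $\gp$ from above; the rest is direct substitution into the formulas for $\h_0$, $\h_1$, and $\eta$, so I anticipate no real obstacle.
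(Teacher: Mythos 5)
Your proof is correct and follows essentially the same route as the paper's: both establish the upper bound via the proper-minor argument for $G_1^+$, the lower bound and part~(ii) by direct substitution into $\h_1$, and part~(iii) via Lemma~\ref{lm-edge-xy} in one direction and the $\h_0$ formula in the other. The only differences are cosmetic --- you spell out the verification that $G_1^+$ is a proper minor and the invocation of Lemma~\ref{lm-minor-tight}, whereas the paper treats these as immediate and relegates $\gp(G_i)\ge 1$ to a footnote.
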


\begin{proof}
  Suppose that $G \in \Forb(\SS_1)$.
  If $\gp(G_1) \ge 2$, then $G_1^+$ contains a toroidal obstruction.
  Since $G_1^+$ is a proper minor of $G$, this contradicts the fact that $G \in \Forb(\SS_1)$.
  Thus $\gp(G_1) \le 1$ and $\gp(G_2) \le 1$ by symmetry.
  If $\gp(G_1) = 0$,
  then $\gp(G) \le 1$ by Theorem~\ref{th-decker-ori}, a contradiction%
  \footnote{The fact that $\gp(G_1)$ and $\gp(G_2)$ are at least 1 is a simple observation, see for example~\cite{mohar-book}.}.
  We conclude that $\gp(G_1) = 1$ and also $\gp(G_2) = 1$ by symmetry. This shows (i).

  If $\ep(G_1)\ep(G_2) = 1$, then it follows from Theorem~\ref{th-decker-ori} that
  $$g(G) \le \gp(G_1) + \gp(G_2) - \ep(G_1)\ep(G_2) = 1,$$
  a contradiction. Thus $\ep(G_1)\ep(G_2) = 0$ and (ii) holds.

  To show (iii), suppose that $xy \not\in E(G)$ and $\eta(G_1, G_2) = 2$.
  By (i) and (ii), this is only possible if $g(G_1) = g(G_2) = 0$.
  By Theorem~\ref{th-decker-ori}, $g(G) \le g(G_1) + g(G_2) + 1 \le 1$, a contradiction.
  The other implication follows from Lemma~\ref{lm-edge-xy}.
\end{proof}

\begin{figure}
  \centering
  \includegraphics{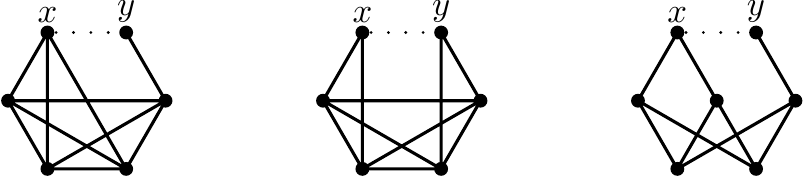}
  \caption{$\T_5$, splits of Kuratowski graphs which belong to $\Cc_0(\gap) \sm \Cc_0(\galt)$}
  \label{fg-split}
\end{figure}
It is time to present the main theorem of this section that derives a full characterization of the obstructions for the torus of connectivity 2.
It can be viewed as an application of Theorem~\ref{th-general} with the outcome summarized in Table~\ref{tb-torus-side}.

\begin{theorem}
  \label{th-torus}
  Suppose that $G$ is an $xy$-sum of connected graphs $G_1$ and $G_2$ and that the following statements hold:
  \begin{enumerate}[label=\rm(\roman*)]
  \item
    $G_1 \in \Cc_0(\gp)$,
  \item 
    $G_2 \in \Cc_0(\galt) \cup \Cc_0(\gap)$,
  \item
    $xy \in E(G)$ if and only if $G_1 \not\in \Cc_0(\g)$ and $G_2 \not\in \Cc_0(\galt)$, and
  \item
    if $\theta(G_1) = \theta(G_2) = 0$, then $G_2 \in \Cc_0(\gap)$.
  \end{enumerate}
  Then $G \in \Forb(\SS_1)$. 
  Furthermore, every obstruction for the torus of connectivity 2 can be obtained this way.
\end{theorem}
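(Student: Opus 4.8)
The plan is to prove the two implications separately, each time reducing to Lemma~\ref{lm-minor-tight}, the Decker--Glover--Huneke formula (Theorem~\ref{th-decker-ori}), and the classification of minor-tight parts (Theorem~\ref{th-general}).

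\textbf{Sufficiency.} Assume (i)--(iv). First I would read off the parameters of the two sides. From $G_1 \in \Cc_0(\gp)$, Lemma~\ref{lm-alt-kuratowski} gives $\gp(G_1) = 1$, $\ep(G_1) = 1$ (because $G_1^+$ is $xy$-alternating on the torus, which is its minimum genus), and $\theta(G_1) = 1$ if and only if $G_1 \notin \Cc_0(g)$. From $G_2 \in \Cc_0(\galt) \cup \Cc_0(\gap)$, Corollary~\ref{cr-theta-torus} gives $\gp(G_2) = 1$, $\ep(G_2) = 0$, and $\theta(G_2) = 1$ if and only if $G_2 \notin \Cc_0(\galt)$. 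Hence $\ep(G_1)\ep(G_2) = 0$, so $\eta(G_1, G_2) = \theta(G_1) + \theta(G_2)$, and condition (iii) says precisely that $xy \in E(G)$ if and only if $\eta(G_1, G_2) = 2$. Now Theorem~\ref{th-decker-ori} gives $\h_1(G) = \gp(G_1) + \gp(G_2) - \ep(G_1)\ep(G_2) = 2$, while $\h_0(G) = g(G_1) + g(G_2) + 1 \ge 2$ because whenever $xy \notin E(G)$ some $\theta(G_i) = 0$ and then $g(G_i) = \gp(G_i) = 1$; thus $g(G) = 2$, so $G$ does not embed into $\SS_1$. Since $G$ is connected, Lemma~\ref{lm-minor-tight} reduces $G \in \Forb(\SS_1)$ to minor-tightness of $G_1$, of $G_2$, and of the edge $xy$ when it is present. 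For $G_1$ and $G_2$ I would go through the rows of Table~\ref{tb-side-classes}: with the parities $\ep(G_1) = 1$, $\ep(G_2) = 0$ and the value of $\eta(G_1, G_2) \in \{0, 1, 2\}$ just computed, Theorem~\ref{th-general} requires $G_1$ to lie in $\Cc(\gp)$ or in $\Cc(g) \cup \Cc(\gp)$, and $G_2$ to lie in $\Cc(\gap)$, in $\Cc(g) \cup \Cc(\galt) \cup \Cc(\gap) \cup \D$, or in $\Cc(\gap) \cup \H^1$; each of these follows from (i), (ii) together with $\gp(G_1) = \gp(G_2) = \gap(G_2) = 1$, the point being that condition (iv) is exactly what rules out $G_2 \in \T_4 = \Cc_0(\galt) \sm \Cc_0(\gap)$ in the row $\eta(G_1, G_2) = 0$ (where $\H^1$ is excluded by $\ep(G_2) = 0$ via Lemma~\ref{lm-hopper-1}). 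When $xy \in E(G)$, minor-tightness of the edge $xy$ follows from Lemma~\ref{lm-edge-xy} together with $\eta(G_1, G_2) = 2$ and the fact that $G_1/xy$ is planar (Lemma~\ref{lm-alt-kuratowski}), so $g(G_1/xy) = 0 < \gp(G_1)$.

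\textbf{Necessity.} Let $G \in \Forb(\SS_1)$ have connectivity $2$. Use Lemma~\ref{lm-dumbbell-no} to pick a $2$-vertex-cut $\{x, y\}$ so that neither part of the resulting $xy$-sum lies in $\D$; write $G$ as an $xy$-sum of connected $G_1$ and $G_2$, both minor-tight by Lemma~\ref{lm-minor-tight}. Lemma~\ref{lm-torus-prop} gives $\gp(G_1) = \gp(G_2) = 1$, $\ep(G_1)\ep(G_2) = 0$, and $\eta(G_1, G_2) = 2$ if and only if $xy \in E(G)$. Relabel so that $\ep(G_2) = 0$. By Corollary~\ref{cr-necessary-obstr}(ii), $G_1 \in \Cc(g) \cup \Cc(\gp)$; since $G_1$ has an edge, $\Cc(g)$-membership forces $g(G_1) = 1$ and hence $G_1 \in \Cc_0(g) \subseteq \Cc_0(\gp)$ by Lemma~\ref{lm-plane-obstr}, while $\Cc(\gp)$-membership with $\gp(G_1) = 1$ gives $G_1 \in \Cc_0(\gp)$ directly; this is (i), and then $\ep(G_1) = 1$ by Lemma~\ref{lm-alt-kuratowski}, so the labeling is forced. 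By Corollary~\ref{cr-necessary-obstr}(i), $G_2$ lies in $\Cc(g) \cup \Cc(\gp) \cup \Cc(\galt) \cup \Cc(\gap) \cup \D$; $\D$ is ruled out by the choice of cut, and $\Cc(\gp)$ and $\Cc(g)$ are ruled out since (as above) they would force $\ep(G_2) = 1$; using $\galt(G_2) \le \gp(G_2) = 1 = \gap(G_2)$ and that $G_2$ has an edge, whichever of $\Cc(\galt)$, $\Cc(\gap)$ contains $G_2$ forces $G_2 \in \Cc_0(\galt) \cup \Cc_0(\gap)$, which is (ii). Condition (iii) then follows from $\eta(G_1, G_2) = \theta(G_1) + \theta(G_2)$ and the translations $\theta(G_1) = 1 \iff G_1 \notin \Cc_0(g)$ (Lemma~\ref{lm-alt-kuratowski}) and $\theta(G_2) = 1 \iff G_2 \notin \Cc_0(\galt)$ (Corollary~\ref{cr-theta-torus}). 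Finally, if $\theta(G_1) = \theta(G_2) = 0$, then $\eta(G_1, G_2) = 0$ and $xy \notin E(G)$, so Theorem~\ref{th-general}(iii) applied with $G_2$ as the varying side (and $\ep(G_1) = 1$) forces $G_2 \in \Cc(\gap) \cup \H^1$; since $\ep(G_2) = 0$ excludes $\H^1$ (Lemma~\ref{lm-hopper-1}), we get $G_2 \in \Cc_0(\gap)$, which is (iv).

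\textbf{Main obstacle.} The delicate part is the bookkeeping for $G_2$ in the necessity direction: turning ``$G_2$ is a minor-tight side with $\ep(G_2) = 0$'' into the sharp conclusion $G_2 \in \Cc_0(\galt) \cup \Cc_0(\gap)$, pinning down the exact correspondence between $\theta(G_2)$ and which class contains $G_2$ (via Lemma~\ref{lm-ti-membership} and Corollary~\ref{cr-theta-torus}), and verifying that $\H^0$, $\H^1$, and $\D$ are genuinely excluded in every relevant row of Table~\ref{tb-side-classes} --- this is where the choice of $2$-cut from Lemma~\ref{lm-dumbbell-no} and the parities $\ep(G_1) = 1$, $\ep(G_2) = 0$ do all the work. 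The sufficiency direction is then a finite verification against Theorem~\ref{th-general}; the real content is checking that (i)--(iv) are exactly strong enough to land in the correct cell of Table~\ref{tb-side-classes} in all three cases $\eta(G_1, G_2) \in \{0, 1, 2\}$, in particular that (iv) is both necessary and sufficient in the case $\eta(G_1, G_2) = 0$.
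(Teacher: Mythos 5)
Your proposal follows the same route as the paper's proof: reduce to minor-tightness via Lemma~\ref{lm-minor-tight}, compute $g(G)=2$ from the Decker--Glover--Huneke formula, verify $G_1$, $G_2$, and $xy$ cell by cell against Theorem~\ref{th-general} for sufficiency, and for necessity choose the cut via Lemma~\ref{lm-dumbbell-no}, normalize $\ep(G_2)=0$, and funnel Corollary~\ref{cr-necessary-obstr} through Lemmas~\ref{lm-torus-prop}, \ref{lm-alt-kuratowski}, \ref{lm-hopper-1} and Corollary~\ref{cr-theta-torus}. The reasoning and intermediate facts (in particular $\eta=\theta(G_1)+\theta(G_2)$, the exclusion of $\H^1$ by $\ep(G_2)=0$, and the role of (iv) in the $\eta=0$ row) match the paper; you prove (iv) directly where the paper argues by contrapositive, which is an inessential difference.
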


\begin{proof}
  The proof consists of two parts. In the first part, we prove that each graph satisfying the conditions (i)--(iv)
  is an obstruction for the torus. In the second part, all obstructions of connectivity 2 are
  shown to be constructed this way.

  Let us assume that (i)--(iv) holds.
  To show that $G$ is an obstruction for the torus, we need to prove that $G_1$, $G_2$, and $xy$ (when $xy \in E(G)$) are minor-tight and that $g(G) = 2$.
  By (i) and Lemma~\ref{lm-alt-kuratowski}, $\ep(G_1) = 1$ and $\gp(G_1) = 1$.
  By (ii) and Corollary~\ref{cr-theta-torus}, $\ep(G_2) = 0$ and $\gp(G_2) = 1$. 
  Hence $\h_1(G) = 2$.
  If $\eta(G_1, G_2) = 2$, then $\theta(G_1) = \theta(G_2) = 1$.
  Thus $G_1 \in \Cc_0(\gp) \sm \Cc_0(\g)$ by Lemma~\ref{lm-alt-kuratowski} and
  $G_2 \in \Cc_0(\gap) \sm \Cc_0(\galt)$ by Corollary~\ref{cr-theta-torus}.
  By (iii), $xy \in E(G)$. 
  Consequently, we have either $\eta(G_1, G_2) \le 1$ or $xy \in E(G)$.
  This excludes the case that $\eta(G_1, G_2) = 2$ and $xy \not\in E(G)$ and we shall use it below.
  If $xy \in E(G)$, then by Theorem~\ref{th-decker-ori}, $g(G) = \h_1(G) = 2$ as required.
  Similarly, if $xy \not\in E(G)$ and $\eta(G_1, G_2) \le 1$, then $\h_1(G) \le \h_0(G)$ by~(\ref{eq-eta}).
  Hence $g(G) = \h_1(G) = 2$ by Theorem~\ref{th-decker-ori}.

\begin{table}
  \centering
  \begin{tabular}{|c | c | c | c|}
    \hline
    $xy \in E(G)$ & $\ep(G_2)$ & $\eta(G_1, G_2)$ & $G_1$ \\
    \hline
    \multirow{2}{*}{yes}  & 0 & \multirow{2}{*}{---} & $\Cc_0(\gp)$ \\
    & 1 & & $\Cc_0(\gap)$ \\
    \hline
    \multirow{4}{*}{no} & \multirow{2}{*}{0} &  0 & $\Cc_0(\gp)$ \\
     & &  1 & $\Cc_0(g)$ or $\Cc_0(\gp)$ \\
    \cline{2-4}
    & \multirow{2}{*}{1} & 0 & $\Cc_0(\gap)$ \\
    & & 1 & $\Cc_0(\galt)$ or $\Cc_0(\gap)$ \\
    \hline
  \end{tabular}
  \caption{Classification of the parts of obstructions for the torus.}
  \label{tb-torus-side}
\end{table}

  It remains to prove minor-tightness.
  Since $\ep(G_2) = 0$ and $G_1 \in \Cc_0(\gp)$, Theorem~\ref{th-general} gives that $G_1$ is minor-tight.
  If $G_2 \in \Cc_0(\gap)$, then $G_2$ is minor-tight by Theorem~\ref{th-general} since $\ep(G_2) = 1$.
  Otherwise, $G_2 \in \Cc_0(\galt) \sm \Cc_0(\gap)$ and $\theta(G_2) = 0$ by Corollary~\ref{cr-theta-torus}.
  Thus $\theta(G_1) = 1$ by (iv) and we have that $\eta(G_1, G_2) = 1$.
  We conclude that $G_2$ is minor-tight by Theorem~\ref{th-general}.

  If $xy \in E(G)$, then (iii) implies that
  $G_1 \in \Cc_0(\gp) \sm \Cc_0(\g)$ and
  $G_2 \in \Cc_0(\gap) \sm \Cc_0(\galt)$.
  Therefore, $\theta(G_1) =1$ by Lemma~\ref{lm-alt-kuratowski} and $\theta(G_2) = 1$ by Corollary~\ref{cr-theta-torus}.
  Hence $\eta(G_1, G_2) = 2$.  Lemma~\ref{lm-alt-kuratowski} applied to $G_1$ implies that $g(G_1 / xy) < \gp(G_1)$.
  Thus $xy$ is minor-tight in $G$ by Lemma~\ref{lm-edge-xy}.
  We conclude that $G$ is an obstruction for the torus by Lemma~\ref{lm-minor-tight}.

  Let us now prove that, for a graph $G \in \Forb(\SS_1)$ of connectivity 2, there exists a 2-vertex-cut $\{x,y\}$ 
  such that when $G$ is viewed as an $xy$-sum of graphs $G_1$ and $G_2$,
  the statements (i)--(iv) hold. We pick $x$ and $y$ as guaranteed by Lemma~\ref{lm-dumbbell-no} so that $G_1, G_2 \not\in \D$.
  Since $G$ is an obstruction, the subgraphs $G_1$, $G_2$, and $xy$ (if present) are minor-tight.
  By Lemma~\ref{lm-torus-prop}, $\gp(G_1) = \gp(G_2) = 1$ and $\ep(G_1)\ep(G_2) = 0$. 
  We may assume by symmetry that $\ep(G_2) = 0$.
  By Corollary~\ref{cr-necessary-obstr}(ii), the graph $G_1$ belongs to $\Cc_0(g) \cup \Cc_0(\gp) = \Cc_0(\gp)$ since $g(G_1) \le \gp(G_1) = 1$.
  Hence (i) holds.
  By Lemma~\ref{lm-alt-kuratowski}, $\ep(G_1) = 1$.

  Since $\ep(G_2) = 0$, Lemma~\ref{lm-alt-kuratowski} gives that $G_2 \not\in \Cc_0(\gp)$.
  By Corollary~\ref{cr-necessary-obstr}(i), the graph $G_2$ belongs to $\Cc_0(\galt) \cup \Cc_0(\gap)$ since $G_2 \not\in \D$, $\gp(G_2) = 1$, and $\gp$ bounds all the other parameters.
  Thus (ii) is true.

  We prove equivalence in (iii) at once.
  By Lemma~\ref{lm-torus-prop}(iii), we have that $xy \in E(G)$ if and only if $\eta(G_1, G_2) = 2$.
  Since $\ep(G_2) = 0$, $\eta(G_1, G_2) = 2$ if and only if $\theta(G_1) = \theta(G_2) = 1$.
  By Lemma~\ref{lm-alt-kuratowski} and Corollary~\ref{cr-theta-torus}, $\theta(G_1) = \theta(G_2) = 1$ if and only if
  $G_1 \in \Cc_0(\gp) \sm \Cc_0(\g)$ and
  $G_2 \in \Cc_0(\gap) \sm \Cc_0(\galt)$.
  We conclude that (iii) holds.

  For (iv), suppose that $G_2 \not\in \Cc_0(\gap)$.
  Since $G_2 \not\in \Cc_0(\gp)$ and $G_2$ is minor-tight, Theorem~\ref{th-general} gives that $\eta(G_1, G_2) = 1$ (as $\H^1_0 \ss \Cc_0(\gp)$ by Lemma~\ref{lm-hopper-1}).
  We conclude that either $\theta(G_1) = 1$ or $\theta(G_2) = 1$ and thus (iv) holds.
  This finishes the proof of the theorem.
\end{proof}

\begin{corollary}
\label{cr-torus}
  There are 68 obstructions for the torus of connectivity 2.
\end{corollary}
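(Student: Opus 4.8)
The plan is to read Corollary~\ref{cr-torus} off Theorem~\ref{th-torus}, which turns the problem into a finite enumeration: up to isomorphism, the obstructions for the torus of connectivity~$2$ are exactly the $xy$-sums $G = G_1 \oplus_{xy} G_2$ with $G_1 \in \Cc_0(\gp)$ and $G_2 \in \Cc_0(\galt) \cup \Cc_0(\gap)$ that satisfy conditions~(iii) and~(iv) of that theorem (whether the edge $xy$ is present being dictated by~(iii)), and conversely every such $xy$-sum is an obstruction of connectivity~$2$, since $\{x,y\}$ separates the $2$-connected graph~$G$. So I would (1) list the two finite pools of building blocks, (2) run through the admissible pairs along the rows of Table~\ref{tb-torus-side}, and (3) quotient the resulting multiset of graphs by isomorphism.

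For step~(1): by Lemma~\ref{lm-plane-obstr}, $\Cc_0(\gp)$ is the three graphs of Fig.~\ref{fg-c0-gp}, one of which lies in $\Cc_0(\g)$ and has $\theta = 0$ while the other two have $\theta = 1$, and all three are $xy$-alternating (Lemma~\ref{lm-alt-kuratowski}). For the other side, Theorem~\ref{th-alt-obstr} together with Lemma~\ref{lm-gap-minus-galt} gives that, among connected graphs, $\Cc_0(\galt) \cup \Cc_0(\gap) = \T_2 \cup \T_3 \cup \T_4 \cup \T_5 \cup \T_6$ (the graphs of $\T_1$ are disconnected and play no role), and Lemma~\ref{lm-ti-membership} locates each class exactly: $\T_2, \T_3 \subseteq \Cc_0(\galt) \cap \Cc_0(\gap)$, $\T_4 = \Cc_0(\galt) \sm \Cc_0(\gap)$, and $\T_5 \cup \T_6 = \Cc_0(\gap) \sm \Cc_0(\galt)$; by Corollary~\ref{cr-theta-torus} these graphs all have $\ep = 0$, with $\theta = 0$ on $\T_2 \cup \T_3 \cup \T_4$ and $\theta = 1$ on $\T_5 \cup \T_6$. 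The cardinalities $|\T_2|, \ldots, |\T_6|$ are then read off Figs.~\ref{fg-kuratowski-sum}, \ref{fg-alt-1-con}, \ref{fg-alt-2-con}, \ref{fg-split}, and~\ref{fg-alt-xy}.

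For step~(2): since $\ep(G_2) = 0$ throughout, condition~(iii) says $xy \in E(G)$ precisely when $G_1 \notin \Cc_0(\g)$ and $G_2 \in \T_5 \cup \T_6$, and condition~(iv) forbids exactly the combination in which $G_1 \in \Cc_0(\g)$ and $G_2 \in \T_4$ (there $\theta(G_1) = \theta(G_2) = 0$ yet $G_2 \notin \Cc_0(\gap)$). Hence: if $G_1$ is the member of $\Cc_0(\g)$, then $G_2$ ranges over $\T_2 \cup \T_3 \cup \T_5 \cup \T_6$ and $xy \notin E(G)$; if $G_1$ is one of the two $\theta = 1$ graphs of $\Cc_0(\gp)$, then $G_2$ ranges over all of $\T_2 \cup \cdots \cup \T_6$, with $xy \in E(G)$ exactly when $G_2 \in \T_5 \cup \T_6$. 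Each admissible pair produces one graph $G$, of genus~$2$ and an obstruction by Theorem~\ref{th-torus}; adding up the list sizes gives the number of admissible pairs.

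Step~(3) is the delicate one, and I expect it to be the main obstacle. Two admissible pairs with $G_1 \cong G_1'$ as graphs with terminals force $G_2 \cong G_2'$, so the only way two pairs yield the same obstruction is via two distinct groupings of a single $2$-separation, which happens precisely when $G_2 \in \T_2$: every graph of $\T_2$ is itself an $xy$-sum of two members of $\Cc_0(\gp)$ glued at $\{x,y\}$, so $G = H_1 \oplus_{xy} H_2 \oplus_{xy} H_3$ with all $H_i \in \Cc_0(\gp)$, and designating any $H_i$ whose complementary $2$-sum $H_j \oplus_{xy} H_k$ lies in $\T_2$ as the $\Cc_0(\gp)$-side reproduces the same $G$ (no further nesting occurs, because a $2$-sum of three or more such blocks is not critical for $\galt$ or for $\gap$ — a minor-operation interior to one block leaves both parameters unchanged — so $G_2$ is at most a $2$-sum). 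I would therefore (a) verify that for $G_2 \notin \T_2$ the obstruction $G$ determines its admissible pair uniquely, and rule out accidental isomorphisms among the remaining cases (the blocks being near-Kuratowski graphs, this amounts to inspecting their $2$-separations), and (b) sort the pairs with $G_2 \in \T_2$ into orbits under permuting the three $\Cc_0(\gp)$-factors, keeping one representative per orbit; the precise over-count here depends on which of the six possible pairwise sums of the three members of $\Cc_0(\gp)$ lie in $\T_2$, read from Fig.~\ref{fg-kuratowski-sum}. Subtracting this over-count from the number of admissible pairs leaves exactly $68$ obstructions.
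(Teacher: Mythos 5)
Your strategy---reduce to a finite enumeration via Theorem~\ref{th-torus}, count admissible pairs $(G_1,G_2)$, and then correct for isomorphic coincidences---is exactly the one the paper uses, and your identification of $\T_2$ as the source of over-counting is right. But two of the steps you treat as routine actually carry real content that the paper supplies and that your sketch does not. First, you assert that ``each admissible pair produces one graph $G$.'' A priori, each pair $(G_1,G_2)$ admits two gluings along $\{x,y\}$ (the two bijections between the two-element terminal sets), which need not give isomorphic graphs; the paper gets a unique $xy$-sum per pair precisely because every graph in $\Cc_0(\gp)$ has an automorphism exchanging $x$ and $y$, an observation you never invoke. Without it, the pre-deduplication total is not pinned down at $76$.

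Second, your claim that two admissible pairs yield the same obstruction only via a regrouping of a single $2$-separation (hence only when $G_2\in\T_2$) elides the possibility that $G$ has a second $2$-vertex-cut $\{x',y'\}\neq\{x,y\}$ also satisfying (i)--(iv). The paper shows this can occur only when $G_2\in\T_5$, and then eliminates it: in that case $G_1\in\Cc_0(\gp)\sm\Cc_0(g)$ and $G_2\in\Cc_0(\gap)\sm\Cc_0(\galt)$, so condition~(iii) forces $xy\in E(G)$, whereas the image of $\{x,y\}$ under the isomorphism is a non-adjacent pair --- a contradiction. You fold this into ``rule out accidental isomorphisms by inspecting $2$-separations,'' which is the right instinct, but without the appeal to (iii) the argument isn't there, and as written the proposal does not establish that the answer is $68$.
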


\begin{proof}
  By Theorem~\ref{th-torus}, for each $G \in \Forb(\SS_1)$ of connectivity 2, there exists
  a 2-vertex-cut $\{x, y\}$ such that $G$ is an $xy$-sum of parts $G_1$ and $G_2$ satisfying
  (i)--(iv). Let us count the number of graphs in $\Forb(\SS_1)$ of connectivity 2 by 
  counting the number of non-isomorphic $xy$-sums satisfying (i)--(iv).

  Let us first count the number of pairs $G_1$ and $G_2$ for which (i), (ii), and (iv) of Theorem~\ref{th-torus} hold.
  The graphs in $\T_1$ are disconnected so their 2-sum with $G_1$ is not 2-connected.
  The number of connected graphs in $\Cc_0(\galt) \cup \Cc_0(\gap)$ is $|\T_2 \cup \cdots \cup \T_6| = 27$ and
  the number of graphs in $\Cc_0(\gp)$ is 3.
  Thus we have precisely 81 pairs satisfying (i) and (ii). However, some of them do not satisfy (iv).

  Let us consider property (iv).
  There is only a single graph in $\Cc_0(\gp)$ that has $\theta$ equal to 0 (Fig.~\ref{fg-c0-gp}(c)).
  By Lemma~\ref{lm-ti-membership}, there are precisely $|\T_4| = 5$ graphs in $\Cc_0(\galt) \sm \Cc_0(\gap)$;
  they all have $\theta$ equal to 0 by Corollary~\ref{cr-theta-torus}.
  Thus 5 pairs out of the total of 81 do not satisfy (iv) of Theorem~\ref{th-torus} giving the total of 76 pairs satisfying (i), (ii), and (iv).

  For fixed graphs $G_1$ and $G_2$ in $\Gcxy$,
  there are four different $xy$-sums with parts $G_1$ and $G_2$; 
  there are two ways how to identify two graphs on two vertices and the edge $xy$ is either present or not.
  Precisely two of those $xy$-sums satisfy (iii) as the presence of $xy$ depends only on $G_1$ and $G_2$.
  Since for each graph in $\Cc_0(\gp)$ there is an automorphism exchanging the terminals,
  there is precisely one $xy$-sum with parts $G_1$ and $G_2$ that satisfies (i) and (iii).

  Therefore, for each of the 76 pairs, there is a unique $xy$-sum satisfying (i)--(iv).
  By Theorem~\ref{th-torus}, each such $xy$-sum is an obstruction for the torus.
  Some of the obtained obstructions are isomorphic though. 
  Let $G$ be an $xy$-sum of $G_1$ and $G_2$ and $G'$ be an $x'y'$-sum of $G_1'$ and $G_2'$ such that both $G$ and $G'$ satisfy (i)--(iv)
  and there is an isomorphism $\psi$ of $\hat{G}$ and $\hat{G'}$. If $\psi(\{x,y\}) \not= \{x',y'\}$, then $\psi(\{x,y\})$ is a 2-vertex-cut in $G'$.
  It is not hard to see that $G'$ has another 2-vertex cut only if $G_2' \in \T_5$.
  We can see that the preimage of $\psi$ of one side of $\psi(\{x,y\})$ is a graph in $\Cc_0(\gp) \sm \Cc_0(g)$.
  Therefore, $G_1 \in \Cc_0(\gp) \sm \Cc_0(g)$ and $G_2 \in \T_5 \ss \Cc_0(\gap) \sm \Cc_0(\galt)$.
  By (iii), $xy \in E(G)$. But $\psi(x)$ is not adjacent to $\psi(y)$, a contradiction.

  We may assume now that $\psi(\{x, y\}) = \{x', y'\}$. If $\psi(V(G_1)) = V(G_1')$, then $G_1 \iso G_1'$ and $G_2 \iso G_2'$ as argued above.
  Thus $\psi(V(G_1)) \not= V(G_1')$. 
  It is not hard to check that only the graphs in $\T_2$ have a subgraph isomorphic to a graph in $\Cc_0(\gp)$.
  There are $18$ pairs $G_1, G_2$ such that $G_1 \in \Cc_0(\gp)$ and $G_2 \in \T_2$; but there are precisely $10$ non-isomorphic obstructions
  for the torus obtained from these 18 pairs.
  We conclude that there are 68 non-isomorphic obstructions for the torus of connectivity 2.
\end{proof}

\section{Open problems}

\begin{figure}
  \centering
  \includegraphics{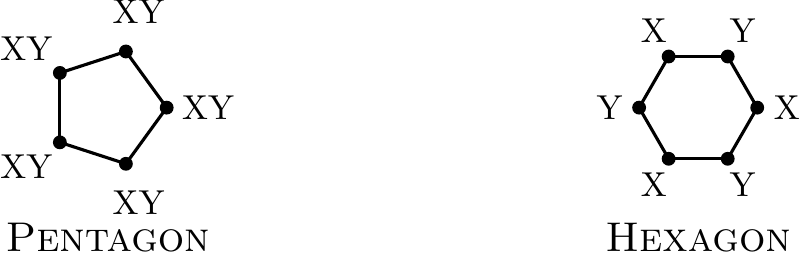}
  \caption{The XY-labelled representation of  $\T_6 \ss \Cc_0(\gap) \sm \Cc_0(\galt)$.}
  \label{fg-alt-xy}
\end{figure}
The following questions remain unanswered:
\begin{enumerate}[label=(\arabic*)]
\item 
  Do hoppers exist? If they do, what is the smallest genus $k$ such that the class $\H^0_k$ ($\H^1_k$, or $\H^2_k$) is non-empty?
\item
  Is it possible that there exists a graph $G \in \Cc(g)$ with $\theta(G) = 1$? In other words, can the graphs $G$ and $G^+$
  be both obstructions for an orientable surface? What is the smallest $k$ such that this is the case for a graph of genus $k$?
\item
  What is the smallest $k(r)$ such that there exists an $r$-connected obstruction $G$ of genus $k$ with a pair of vertices $x, y$
  such that $G$ is {\em not\/} $xy$-alternating. For example, $k(0) = 2$. We do not know the value $k(r)$ for $r > 0$.
\end{enumerate}

\bibliographystyle{habbrv}
\bibliography{bibliography}

\end{document}